\def\RR{{\bf R}}
\def\ZZ{{\bf Z}}
\def\bigmid {\ \left|{\Large \strut}\right.}
\numberwithin{equation}{section}
\newtheorem{Thm}{Theorem}[section]
\newtheorem{Lem}[Thm]{Lemma}
\theoremstyle{definition}
\newtheorem*{Clm}{Claim}
\newtheorem{Ex}[Thm]{Example}
\title{Uniform modular lattices and affine buildings}
\author{Hiroshi HIRAI \\
Department of Mathematical Informatics, \\
Graduate School of Information Science and Technology,   \\
The University of Tokyo, Tokyo, 113-8656, Japan.\\
\texttt{\normalsize hirai@mist.i.u-tokyo.ac.jp}
}
\begin{document}

\maketitle
\begin{abstract}
In this paper, 
we present a simple lattice-theoretic characterization 
for affine buildings of type A.
We introduce a class of modular lattices, called uniform modular lattices, 
and show that uniform modular lattices and affine buildings of type A 
constitute the same object. 
This is an affine counterpart of 
the well-known equivalence between projective geometries ($\simeq$ complemented modular lattices) and spherical buildings of type A.
\end{abstract}

Keywords: Modular lattice, affine building of type A

\section{Introduction}

{\em Buildings}, due to Tits~\cite{Tits} and Bruhat and Tits~\cite{BruhatTits}, 
are simplicial complexes 
that extract combinatorial properties of algebraic groups, 
and have numerous important applications in branches of mathematics; see~\cite{BuildingBook,Scharlau}.
The present article addresses lattice-theoretic aspects of buildings.
As is well-known, spherical buildings of type A 
and (generalized) projective geometries
are the same mathematical object~\cite{Tits}: 
All chains (flags) of subspaces in a projective geometry form 
a spherical building of type A, 
and any spherical building of type A is obtained in this way.  
In lattice theory, a classical result~\cite{Birkhoff} by Birkhoff
says that the subspace lattice of a projective geometry is 
exactly a {\em complemented modular lattice} of finite rank 
(also known as a {\em modular geometric lattice})---a modular lattice 
in which the maximum element is the join of atoms.
Thus we can say:
\begin{Thm}[{\cite{Tits}; see e.g., \cite[THEOREM 4.1.4]{Scharlau}}]\label{thm:main0}
	Complemented modular lattices of finite rank and spherical buildings of type A 
	constitute the same object.
\end{Thm}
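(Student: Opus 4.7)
The plan is to combine Birkhoff's classical theorem with the standard flag-complex construction. Since a complemented modular lattice $L$ of finite rank $n+1$ is, by Birkhoff, the subspace lattice of an $n$-dimensional projective geometry, it suffices to exhibit a natural bijection between such lattices and spherical buildings of type $A_n$. I would give the correspondence in both directions and verify that they are mutually inverse.

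For the direction lattice $\to$ building, I would take $\Delta(L)$ to be the order complex of the proper nontrivial elements of $L$: a simplex is a flag $x_0 < x_1 < \cdots < x_k$, and a chamber is a maximal (complete) flag. Each chamber carries a natural labeling by the rank function of $L$, giving its vertices types $1, 2, \ldots, n$. Apartments are indexed by \emph{frames}, i.e., independent sets $\{a_0, a_1, \ldots, a_n\}$ of atoms whose join is $\hat{1}$; the flags built from joins of subsets of a given frame form a subcomplex isomorphic to the Coxeter complex of type $A_n$, on which the symmetric group $S_{n+1}$ acts by permutation of the frame. I would then verify the two building axioms: (B1) any two chambers lie in a common apartment, and (B2) for any two apartments sharing a chamber there is a type-preserving isomorphism between them fixing the intersection pointwise.

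For the reverse direction, given a spherical building $\Delta$ of type $A_n$, I would use the canonical type function $\tau$ on its vertices to partition them into types $1, \ldots, n$, adjoin a bottom $\hat{0}$ and a top $\hat{1}$, and declare $x \leq y$ iff $\tau(x) \leq \tau(y)$ and $\{x, y\}$ is a face of some chamber (extended transitively, with $\hat{0}, \hat{1}$ minimal and maximal). Within a single apartment this yields the Boolean lattice on an $(n+1)$-element set, which is manifestly complemented and modular. Globally, joins and meets are well-defined because (B1) reduces any finite set of elements to a common apartment, and complementation in $L(\Delta)$ is witnessed by the frame structure of any containing apartment.

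The main obstacle is axiom (B1): it encodes the full combinatorial content of modularity plus complementation. Given two complete flags, I would produce a common frame by an exchange-type induction: complementation repeatedly supplies atoms transverse to the current subspaces, while modularity guarantees rank-additivity under join and meet, so independence propagates through the construction. Conversely, verifying modularity of $L(\Delta)$ reduces via the building axioms to the elementary fact that the $A_n$ Coxeter complex is the flag complex of a Boolean lattice. Once (B1), (B2), and the lattice axioms are checked, showing that the two constructions are mutual inverses is essentially bookkeeping, as both sides are determined by the same vertex-and-incidence data.
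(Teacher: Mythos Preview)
Your proposal is correct and follows essentially the same route as the paper: the order complex of $L\setminus\{\hat 0,\hat 1\}$ with apartments coming from frames (the paper's ``$\{0,1\}^n$-skeletons''), the key step being that any two maximal chains sit in a common Boolean sublattice (the paper's Lemma~\ref{lem:twochains}, proved by exactly the exchange-type induction you describe via Lemma~\ref{lem:complement}), and the reverse construction via a type/coloring and apartment-wise meets and joins. Two minor remarks: your invocation of Birkhoff's projective-geometry theorem is a detour you never actually use, and the paper avoids it entirely by working purely lattice-theoretically; also, for your axiom (B2) the paper isolates a clean invariant---the ``relative position'' of an element with respect to a maximal chain (Lemma~\ref{lem:indep})---which makes the required isomorphism between two apartments automatically the identity on their intersection, sparing you the bookkeeping.
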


The goal of this paper is to establish 
an analogue of this theorem for affine buildings of type A. 
We introduce an affine analogue of complemented modular lattices, 
named {\em uniform modular lattices}.
This class of modular lattices is simply defined:
A modular lattice ${\cal L}$ is called {\em uniform} if
the operator $x \mapsto$ (the join of all elements covering $x$)
is an automorphism on ${\cal L}$. 
Our main result, which might be a reasonable affine counterpart of Theorem~\ref{thm:main0},  
is as follows.
\begin{Thm}\label{thm:main}
Uniform modular lattices and 	
affine buildings of type A constitute the same object.
\end{Thm}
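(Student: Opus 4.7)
The plan is to establish the equivalence by constructing maps in both directions between uniform modular lattices and affine buildings of type $\tilde A$, and then verifying they are mutually inverse. Starting from a uniform modular lattice $\cal L$ with automorphism $\sigma: x \mapsto \bigvee\{y : y \succdot x\}$, modularity yields a $\ZZ$-valued rank function $\rank$ on $\cal L$. First I would show, using uniformity, that each bounded interval $[x,\sigma(x)]$ is a complemented modular lattice of finite rank $n$, and that $n$ is independent of $x$, so that $\rank(\sigma(x)) - \rank(x) = n$. I then define a simplicial complex $\mDelta({\cal L})$ whose vertices are $\sigma$-orbits of elements of $\cal L$, typed by $\rank \bmod n$, and whose simplices are chains $x_0 < x_1 < \cdots < x_k$ with $x_k \le \sigma(x_0)$, taken modulo $\sigma$. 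To each frame of $\cal L$---a family $\{a_1,\dots,a_n\}$ of atoms of some $[x,\sigma(x)]$ whose join is $\sigma(x)$---I associate an apartment of $\mDelta({\cal L})$: the $\sigma$-translates of the Boolean sublattice generated by $\{a_1,\dots,a_n\}$ form a sublattice isomorphic to $\ZZ^n$, whose image in $\mDelta({\cal L})$ is a Coxeter complex of type $\tilde A_{n-1}$.

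Conversely, starting from an affine building $\mDelta$ of type $\tilde A_{n-1}$, the idea is to unfold the translation direction. Fix a base chamber $C_0$ together with a chosen lift of its vertices to $\ZZ^n$. For any apartment $A$ containing $C_0$, the building axioms provide a unique type-preserving isomorphism to the standard apartment of $\tilde A_{n-1}$, which identifies vertices with $\ZZ^n/\ZZ\mathbf 1$; the chosen lift of $C_0$ then promotes this to a $\ZZ^n$-labelling of $A$. Retractions onto $A$ propagate the labelling to all of $\mDelta$. I then define $\cal L$ as the set of labelled vertices, with partial order, meet, and join taken coordinatewise within any apartment containing two given elements, and with $\sigma$ acting by shift by $\mathbf 1$; modularity descends from $\ZZ^n$.

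The main obstacle I anticipate is the common-apartment axiom in the first direction: any two simplices of $\mDelta({\cal L})$ should lie in a common apartment. Lattice-theoretically this amounts to showing that for any $x, y \in \cal L$ there is a frame whose $\sigma$-translates generate a sublattice containing both $x$ and $y$. I expect this to follow from a ``simultaneous frame'' statement for complemented modular lattices applied to the bounded interval $[x \wedge y, \sigma^{N}(x \vee y)]$ for sufficiently large $N$ (this interval being complemented modular of finite rank by iterated uniformity), followed by $\sigma$-translation. A secondary difficulty is the well-definedness of meet and join in the second direction, which reduces via retractions to agreement inside a single apartment and uses the uniqueness of type-preserving isomorphisms between apartments sharing a chamber. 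Once these are in hand, the two constructions should be mutually inverse via the natural bijection between frames of $\cal L$ and apartments of $\mDelta({\cal L})$, together with the observation that the $\ZZ^n$-labelling of $\mDelta({\cal L})$ recovers precisely the elements of $\cal L$.
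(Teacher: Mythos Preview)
Your overall architecture matches the paper's (your $\mDelta({\cal L})$ is the paper's ${\cal C}({\cal L})$, and your unfolding on $\varDelta^0 \times \ZZ$ is the paper's ${\cal L}(\varDelta)$), and you correctly single out axiom (B2) as the crux. But there is a genuine gap in how you build apartments and in your strategy for (B2), and both stem from the same misconception.

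First, the $\sigma$-translates of the Boolean cube $\langle a_1,\ldots,a_n\rangle \subseteq [x,\sigma(x)]$ do form a sublattice, but \emph{not} one isomorphic to $\ZZ^n$. Already in ${\cal L} = \ZZ^n$ with $x = {\bf 0}$ and $a_i = e_i$, the union $\bigcup_{k \in \ZZ}(\{0,1\}^n + k{\bf 1})$ is only the thin diagonal strip $\{z : \max_i z_i - \min_i z_i \leq 1\}$, whose image in $\ZZ^n/\ZZ{\bf 1}$ is finite and cannot be a Coxeter complex. The paper's apartments ($\ZZ^n$-skeletons) are generated instead by \emph{rays}---infinite chains $a^0 \prec a^1 \prec \cdots$ with $a^{l+1} \notin [a^{l-1},(a^{l-1})^+]$ for all $l$---and a \emph{frame} is a set of $n$ independent such rays (Lemmas~\ref{lem:ray} and~\ref{lem:Z^n}). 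Second, and relatedly, your attack on (B2) assumes $[x \wedge y,\ \sigma^N(x \vee y)]$ is complemented modular; this is false. In $\ZZ^n$ the interval $[{\bf 0}, N{\bf 1}]$ has atoms $e_1,\ldots,e_n$ with join ${\bf 1} \neq N{\bf 1}$, hence is not complemented once $N \geq 2$, so the ``simultaneous frame'' Lemma~\ref{lem:twochains} cannot be invoked there. The paper's proof of the corresponding Lemma~\ref{lem:twochains_e} is the technical heart of the whole argument: given short chains $C \subseteq [x,(x)^+]$ and $D \subseteq [y,(y)^+]$ with $x \preceq y$, it filters through $x = x_0 \prec x_1 \prec \cdots \prec x_m = y$ with $x_j := (x_{j-1})^+ \wedge y$, and inductively extends a partial frame at $(x_j)^+$ down to one at $(x_{j-1})^+$, applying Lemma~\ref{lem:complement} only inside the genuinely complemented single-step intervals $[x_{j-1},(x_{j-1})^+]$. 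A smaller issue in the reverse direction: apartment-independence of $\wedge,\vee$ does not follow from retractions and (B3) alone; the paper first uses CAT(0) geodesic uniqueness (Lemma~\ref{lem:every}) to force $\min(p,q)$ and $\max(p,q)$ into \emph{every} apartment containing $p$ and $q$, after which the retraction comparison goes through.
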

The precise meaning of this theorem is 
explained in Theorems~\ref{thm:main1} and \ref{thm:main2}:
The former says that  
any uniform modular lattice ${\cal L}$ gives rise to 
an affine building of type A as the projection ${\cal C}({\cal L})$ of a subcomplex of its order complex,
and the latter constructs a uniform modular lattice ${\cal L}(\varDelta)$ 
from any affine building $\varDelta$ of type A 
for which ${\cal C}({\cal L}(\varDelta)) = \varDelta$.

By relaxing modularity to semimodularity, 
we obtain, in the same way, a further natural class of lattices, called {\em uniform semimodular lattices}. 
This class of lattices is studied in the companion paper~\cite{HH18a}.

After submitting this paper, 
we found a closely related approach by Abels~\cite{Abels91}. 
He introduced the notion of 
{\em semimodular lattices with cofinal $Z$-action}, and
studied, in a lattice-theoretic way, 
the gallery distance in the affine building of ${\rm SL}(K^n)$ 
for a field $K$ with a discrete valuation.
He noticed that the affine building of ${\rm SL}(K^n)$
gives rise to a modular lattice with cofinal $Z$-action 
and that the simplicial structure of the building is described 
by this lattice.
A uniform (semi)modular lattice ${\cal L}$
is indeed an example of (semi)modular lattices with cofinal $Z$-action, 
where
the constructions of lattice ${\cal L}$ from the affine building of ${\rm SL}(K^n)$ 
and the simplicial complex ${\cal C}({\cal L})$ from ${\cal L}$
coincide with that in \cite{Abels91}; see Example~\ref{ex:module}.   
Thus one can also say that the result in this paper pushes 
the lattice-theoretic approach of \cite{Abels91}
toward general affine buildings of type A.

The rest of this paper is organized as follows.
In Section~\ref{sec:pre}, we introduce basic terminologies 
and results on lattices and buildings.
In Section~\ref{sec:uniform}, 
we formally introduce the concept of uniform modular lattices, and
establish the main theorem (Theorem~\ref{thm:main}) 
by proving Theorems~\ref{thm:main1} and \ref{thm:main2}. 
In Section~\ref{sec:concluding}, we give some remarks 
that include future applications of 
uniform modular lattices to combinatorial optimization.

\section{Preliminaries}\label{sec:pre}
Our references for lattices are \cite[Chapter II]{Aigner}, the first edition of \cite{Birkhoff}, and \cite{Gratzer}. 
The references for buildings are \cite{BuildingBook,Garrett,Scharlau}.
\subsection{Basic notation}
Let $\ZZ$ and $\RR$ be the set of integers and reals, respectively.
Let $\RR_+$ denote the set of nonnegative reals.
In $\RR^n$, let $e_i$ denote the $i$th unit vector, 
${\bf 0}$ the zero vector, 
and ${\bf 1}$ the all-one vector.
For $x, y \in \RR^n$, by $x \leq y$ we mean that $x_i \leq y_i$ for all $i=1,2,\ldots,n$.
Let $\min (x,y)$ and $\max (x,y)$ 
be defined as the vectors in $\RR^n$ obtained from $x,y$
by taking minimum and maximum componentwise, respectively. 

\paragraph{Lattices.}
We will use the basic terminology of poset and lattice.
A poset (partially ordered set) ${\cal L}$ is a set endowed 
with a partial order relation $\preceq$, 
where $x \prec y$ is meant as $x \preceq y$ and $x \neq y$.
The {\em opposite} $\check{\cal L}$ of ${\cal L}$ 
is the poset on ${\cal L}$ with partial order $\preceq'$ defined by 
$x \preceq' y \Leftrightarrow y \preceq x$. 
The partial order of the direct product ${\cal L} \times {\cal L}'$ of two posets ${\cal L},{\cal L}'$
is defined as $(x,x') \preceq (y,y')$ $\Leftrightarrow$ $x \preceq y$ and $x' \preceq y'$.

For elements $x,y$ with $x \preceq y$, the {\em interval} of $x,y$ 
is the set of elements $z$ with $x \preceq z \preceq y$, and is denoted by $[x,y]$.
We say that $y$ {\em covers} $x$ if $x \neq y$ and $[x,y] = \{x,y\}$.
A totally ordered subset ${\cal C}$ of ${\cal L}$ is called a {\em chain}.
If a chain ${\cal C}$ consists of $x_0,x_1,\ldots,x_m,\ldots$ 
with $x_i \prec x_{i+1}$ for all indices $i$, then ${\cal C}$ is denoted by
$(x_0 \prec x_1 \prec \cdots \prec x_m \prec \cdots)$.
The {\em length} of chain ${\cal C}$ is defined as its cardinality $|{\cal C}|$ minus one.
The unique minimal common upper bound of elements $x,y$
is called the {\em join} of $x,y$, and is denoted by $x \vee y$ if it exists.
The unique maximal common lower bound of $x,y$
is called the {\em meet} of $x,y$, and is denoted by $x \wedge y$ if it exists.
A {\em lattice} ${\cal L}$ is a poset such that 
every pair of elements has the join and meet.
An {\em isomorphism} between two lattices ${\cal L}$ and ${\cal L'}$ 
is a bijection $\varphi:{\cal L} \to {\cal L}'$
such that $\varphi(x \wedge y) = \varphi(x) \wedge \varphi(y)$
and $\varphi(x \vee y) = \varphi(x) \vee \varphi(y)$
for all $x,y \in {\cal L}$, or equivalently, 
$\varphi(x) \preceq \varphi(y) \Leftrightarrow x \preceq y$ for all $x,y \in {\cal L}$.
In addition, if ${\cal L} = {\cal L'}$, then $\varphi$ is called an {\em automorphism} 
on ${\cal L}$.
For a subset $S \subseteq {\cal L}$, 
the unique maximal lower bound of $S$ (the {\em meet} of $S$) is denoted by 
$\bigwedge S$ if it exists, and the unique minimal upper bound of $S$ (the {\em join} of $S$) is denoted by $\bigvee S$ if it exists.
In a lattice ${\cal L}$, the minimum element $\bigwedge {\cal L}$, if it exists,
is denoted by $\bar 0$, and the maximum element $\bigvee {\cal L}$, if it exists,
is denoted by $\bar 1$.
In a lattice ${\cal L}$ having the minimum element $\bar 0$, 
the rank $r(x)$ of element $x$ is the maximum length of a chain in $[\bar 0,x]$.
The rank of ${\cal L}$ (having $\bar 0$ and $\bar 1$) is 
the maximum length of a maximal chain of ${\cal L}$.
By an {\em atom} we mean an element of rank one.
A {\em sublattice} of a lattice ${\cal L}$ is a subset ${\cal L'} \subseteq {\cal L}$
with the property that $x,y \in {\cal L'}$ imply $x \wedge y, x \vee y \in {\cal L}'$.
Intervals are sublattices.
In this paper, any lattice satisfies the following finiteness assumption:
\begin{itemize}
	\item[(F)] Any interval $[x,y]$ has a finite rank $r[x,y] < \infty$.
\end{itemize}

\paragraph{Simplicial complexes.}
A {\em simplicial complex} $\varDelta$ is a family of subsets of a nonempty set $V$ such that
$A' \subseteq A \in \varDelta$ implies $A' \in \varDelta$.
A member $A$ in $\varDelta$ is called a {\em simplex}, and 
its {\em dimension} is defined as $|A|-1$. 
The dimension of $\varDelta$ is defined as 
the maximum dimension of a simplex in $\varDelta$.
A $0$-dimensional simplex is called a {\em vertex}.
The set of vertices is denoted by $\varDelta^0$.
We assume that all singleton $\{v\}$ $(v \in V)$ are vertices, and hence
$\varDelta^0$ is identified with $V$. 
Two simplicial complexes 
$\varDelta, \varDelta'$ are {\em isomorphic} 
if there exists a bijection $\rho: \varDelta^0 \to \varDelta'^0$, called 
an {\em isomorphism}, such that $A \in \varDelta \Leftrightarrow \rho(A) \in \varDelta'$.
An isomorphism $\rho$ induces an inclusion-preserving bijection 
$\varDelta \to \varDelta'$ by $A \mapsto \rho(A)$; 
therefore $\rho$ is also regarded as $\varSigma \to \varSigma'$.

For a poset ${\cal P}$, the {\em order complex} ${\cal O}({\cal P})$ of ${\cal P}$ is 
the simplicial complex on ${\cal P}$ consisting of all chains of finite length.

A {\em geometric realization} $|\varDelta|$ of $\varDelta$ is 
the set of all functions $u : \varDelta^0 \to \RR_+$ 
such that 
$\{x \in \varDelta^0 \mid u(x) > 0\} \in \varDelta$ and $\sum_{x \in \varDelta^0} u(x) =1$.
Then, abstract simplices in $\varDelta$ become geometric simplices in $|\varDelta|$ with mutually disjoint relative interiors.
%the set of all $x \in {\varDelta^0}$ with $u(x) > 0$
%is a simplex, and the sum of $u(x)$ over $u \in \varDelta^0$ is equal to $1$.
%Namely $|\varDelta|$ is the space of all convex combinations 
%of simplices in $\varDelta$.
%

\subsection{Modular lattices}\label{subsec:modular}

A lattice ${\cal L}$ is called {\em modular} 
if $(y \wedge z) \vee x = (x \vee z) \wedge y$
for all triples $x,y,z \in {\cal L}$ with $x \preceq y$. 
Modular lattices satisfy the Jordan-Dedekind chain condition:
\begin{itemize}
	\item[(JD)] Maximal chains in any interval have the same length.
\end{itemize}
A {\em valuation} of a lattice ${\cal L}$
is a function $v:{\cal L} \to \RR$ satisfying $v(x) < v(y)$ for all 
$x,y \in {\cal L}$ with $x \prec y$, and 
\begin{equation}\label{eqn:valuation}
v(x) + v(y) = v(x \wedge y) + v(x \vee y) \quad (x,y \in {\cal L}).
\end{equation}
It is well-known that the rank function of a modular lattice having $\bar 0$ is a valuation; 
see e.g., \cite[Theorem 2.27]{Aigner}.
Conversely the existence of a valuation implies the modularity.
\begin{Lem}[{see \cite[Theorem 3.11]{Birkhoff}}]\label{lem:valuation}
	A lattice ${\cal L}$ having a valuation is a modular lattice.
\end{Lem}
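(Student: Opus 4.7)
The plan is to verify the modular law $(y\wedge z)\vee x = (x\vee z)\wedge y$ for any $x,y,z$ with $x \preceq y$ by showing (i) that the left-hand side is always $\preceq$ the right-hand side (this is the ``modular inequality'' and holds in any lattice), and (ii) that the valuation $v$ assigns the same value to both sides. Since $v$ is strictly monotone on $\preceq$, equality of values together with the comparability forces equality of elements.

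First I would record the easy observation that whenever $x \preceq y$, both $x$ and $y\wedge z$ lie below $y$, and both lie below $x\vee z$ (the latter because $y\wedge z \preceq z \preceq x\vee z$). Hence $x\vee(y\wedge z) \preceq y \wedge (x\vee z)$. I would also note that the defining inequality $v(a) < v(b)$ for $a \prec b$ forces: if $a \preceq b$ and $v(a) = v(b)$, then $a = b$.

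The main step is the computation of $v$ on both sides. Using the valuation identity \eqref{eqn:valuation} and the fact that $x \preceq y$ gives $x\wedge y = x$ and $x\vee y = y$, I would compute
\begin{align*}
v\bigl(x\vee(y\wedge z)\bigr) &= v(x) + v(y\wedge z) - v\bigl(x\wedge(y\wedge z)\bigr) = v(x) + v(y\wedge z) - v(x\wedge z),\\
v\bigl(y\wedge(x\vee z)\bigr) &= v(y) + v(x\vee z) - v\bigl(y\vee(x\vee z)\bigr) = v(y) + v(x\vee z) - v(y\vee z).
\end{align*}
Expanding $v(x\vee z) = v(x)+v(z)-v(x\wedge z)$ and $v(y\vee z) = v(y)+v(z)-v(y\wedge z)$ in the second line, the terms $v(y)$ and $v(z)$ cancel, leaving $v(x)+v(y\wedge z)-v(x\wedge z)$, which equals the first line. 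Combined with step (i), this yields $x\vee(y\wedge z) = y\wedge(x\vee z)$, i.e.\ modularity.

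There is no real obstacle here: the proof is essentially a bookkeeping exercise once one exploits the two key identities $x\wedge y = x$ and $x\vee y = y$ valid under $x \preceq y$. The only conceptual point is the translation ``strictly monotone valuation'' + ``comparable elements of equal value'' $\Rightarrow$ ``equal elements,'' which converts the numerical equality of $v$'s into the lattice-theoretic equality required by the modular law.
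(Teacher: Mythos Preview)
Your proof is correct and follows exactly the approach of the paper's sketch: establish the modular inequality $(y\wedge z)\vee x \preceq (x\vee z)\wedge y$, use the valuation identity (\ref{eqn:valuation}) to show both sides have equal $v$-value, and conclude equality via strict monotonicity of $v$ (the paper's (rc) argument). You have simply carried out in detail the computation the paper summarizes as ``one can see that $v((y\wedge z)\vee x) = v((x\vee z)\wedge y)$.''
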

\begin{proof}[Sketch of proof]
    For $x,y,z \in {\cal L}$ with $x \preceq y$, it always holds $(y \wedge z) \vee x \preceq (x \vee z) \wedge y$. If ${\cal L}$ has a valuation $v$, 
    by using (\ref{eqn:valuation}), one can see that $v((y \wedge z) \vee x) 
    = v((x \vee z) \wedge y)$, which implies $(y \wedge z) \vee x = (x \vee z) \wedge y$.
\end{proof}
This proof uses the following obvious rank-comparison argument, which will be often used later:
\begin{itemize}
	\item[(rc)] $x \preceq y$ and $v(x) = v(y)$ imply $x = y$. 
\end{itemize}
For a subset $S$ of lattice ${\cal L}$, let $\langle S \rangle$ 
denote the sublattice of ${\cal L}$ generated by $S$ 
($=$ the minimum sublattice containing $S$).
\begin{Lem}[{See \cite[Theorems 348, 364]{Gratzer}}]\label{lem:isomorphism}
	Let ${\cal L}$ be a modular lattice.
	For $p,q \in {\cal L}$, the following hold:
	\begin{itemize}
		\item[{\rm (1)}] $[p,p \vee q]$ 
		is isomorphic to $[p \wedge q,q]$ by $x \mapsto x \wedge q$ with inverse $y \mapsto y \vee p$.
		\item[{\rm (2)}] $\langle [p \wedge q,p] \cup [p \wedge q,q] \rangle$ is isomorphic to $[p \wedge q,p] \times [p \wedge q,q]$
		by $x \mapsto (x \wedge p, x \wedge q)$ with inverse $(u,v) \mapsto u \vee v$.
	\end{itemize}	 
\end{Lem}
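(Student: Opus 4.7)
I would handle this by short direct calculations. Set $\varphi : [p, p\vee q] \to [p\wedge q, q]$ by $\varphi(x) := x\wedge q$ and $\psi : [p\wedge q, q] \to [p, p\vee q]$ by $\psi(y) := y\vee p$; well-definedness of the target ranges is immediate. For $\psi\circ\varphi = \mathrm{id}$, apply the modular identity with $p\preceq x$: $(x\wedge q)\vee p = (p\vee q)\wedge x = x$, using $x\preceq p\vee q$. For $\varphi\circ\psi = \mathrm{id}$, apply it with $y\preceq q$: $(y\vee p)\wedge q = (p\wedge q)\vee y = y$, using $p\wedge q\preceq y$. Since both maps are order preserving and mutually inverse, they are lattice isomorphisms.

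\textbf{Part (2).} The plan is to show that the set $T := \{u\vee v : u \in [p\wedge q, p],\ v \in [p\wedge q, q]\}$ is itself a sublattice, and hence coincides with $\langle [p\wedge q, p]\cup[p\wedge q,q]\rangle$; then the map $\theta(x) := (x\wedge p, x\wedge q)$ is a lattice isomorphism $T \to [p\wedge q, p]\times[p\wedge q, q]$ with inverse $\sigma(u,v) := u\vee v$. The key ``decoding'' step is this: for $u\in[p\wedge q,p]$ and $v\in[p\wedge q,q]$ one has $v\wedge p = p\wedge q$ (from $p\wedge q \preceq v \preceq q$), so modularity applied with $u\preceq p$ yields $(u\vee v)\wedge p = u\vee(v\wedge p) = u\vee(p\wedge q) = u$, and symmetrically $(u\vee v)\wedge q = v$. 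This gives $\theta\circ\sigma = \mathrm{id}$ at once, and therefore injectivity of $\sigma$ and surjectivity of $\theta|_T$.

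Closure of $T$ under $\vee$ is straightforward since $(u_1\vee v_1)\vee(u_2\vee v_2) = (u_1\vee u_2)\vee(v_1\vee v_2)$. Closure under $\wedge$ is the main obstacle and is precisely where modularity is essential. I would set $\alpha := (u_1\wedge u_2)\vee(v_1\wedge v_2)$, note the trivial inequality $\alpha\preceq (u_1\vee v_1)\wedge(u_2\vee v_2)$, and then compare ranks via the valuation $r$ (which exists by the remark preceding Lemma~\ref{lem:valuation}). Using $u_i\wedge v_j = p\wedge q$ and $(u_1\wedge u_2)\wedge(v_1\wedge v_2) = p\wedge q$, a routine expansion of $r(u_1\vee v_1) + r(u_2\vee v_2) - r((u_1\vee v_1)\vee(u_2\vee v_2))$ and of $r(\alpha)$ shows both quantities equal $r(u_1\wedge u_2)+r(v_1\wedge v_2)-r(p\wedge q)$; assumption (F) ensures finiteness, and (rc) forces $\alpha = (u_1\vee v_1)\wedge(u_2\vee v_2)$, which is of the required form.

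Finally, that $\theta$ is a lattice homomorphism is easy: meet-preservation is automatic because the projections $x\mapsto x\wedge p$ and $x\mapsto x\wedge q$ commute with meet; join-preservation reduces to the identity $(x\vee y)\wedge p = (x\wedge p)\vee(y\wedge p)$ for $x,y\in T$, which follows from one more modularity step using $(x\wedge p)\vee(y\wedge p)\preceq p$ together with the fact that the ``$q$-parts'' of $x,y$ contribute only $p\wedge q$ after meeting with $p$. Thus the real content lies in the meet-closure of $T$, and everything else is bookkeeping built on Part~(1)-style modularity moves.
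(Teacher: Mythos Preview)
The paper does not supply its own proof of this lemma; it is stated with a citation to Gr\"atzer and no argument. Your proof is correct. Part~(1) is the standard two-line verification of the diamond isomorphism. For Part~(2), your key ``decoding'' identities $(u\vee v)\wedge p=u$ and $(u\vee v)\wedge q=v$ are right and immediately give $\theta\circ\sigma=\mathrm{id}$; your rank computation for meet-closure of $T$ also checks out (all elements lie in the interval $[p\wedge q,p\vee q]$, which has minimum $p\wedge q$ and finite rank by~(F), so its rank function is the valuation you invoke). One small redundancy: once you have established $(u_1\vee v_1)\wedge(u_2\vee v_2)=(u_1\wedge u_2)\vee(v_1\wedge v_2)$, the map $\sigma$ is already a bijective lattice homomorphism (it preserves $\vee$ trivially and $\wedge$ by that identity), so the separate check that $\theta$ preserves joins is not needed.

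A remark on scope: your meet-closure step relies on~(F) through the valuation argument, whereas the classical textbook proof derives the same meet identity by iterated applications of the modular law alone and hence holds in an arbitrary modular lattice. Since~(F) is a standing hypothesis in this paper, your route is entirely adequate here and arguably tidier.
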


In a lattice ${\cal L}$ with $\bar 0$ and $\bar 1$, 
a {\em complement} of an element $x \in {\cal L}$ is an element $y$ 
such that $x \wedge y = \bar 0$ and $x \vee y = \bar 1$.
A lattice ${\cal L}$ with $\bar 0$ and $\bar 1$ is said to be {\em complemented} 
if every element has a complement. 
The following facts are basic; see e.g., \cite[Theorem 4.1]{Birkhoff}.
\begin{itemize}
	\item[(cm1)] A modular lattice is complemented if and only if $\bar 1$ is the join of atoms.
	\item[(cm2)] Every interval of a complemented modular lattice is complemented modular.
	\item[(cm3)] The opposite of a complemented modular lattice is complemented modular.
\end{itemize}
Note that (cm3) is immediate from the definition, 
and (cm1) is true under the assumption (F); 
in particular, {\em modular geometric lattices} (see \cite[Section II. 3]{Aigner})  and 
complemented modular lattices are the same in this paper.
%
%It is well-known that 
%a complemented modular lattice is precisely a modular lattice 
%such that $1$ is the join of all atoms, and that every interval of a complemented modular lattice is again complemented. 
%

In a complemented modular lattice ${\cal L}$ of rank $n$, 
a set of $k$ atoms $a_1,a_2,\ldots,a_k$ 
is said to be {\em independent} if $r(a_1 \vee a_2 \vee \cdots \vee a_k) = k$, or equivalently 
if $a_i \wedge \bigvee_{j \neq i} a_j = \bar 0$ for all $i$.
A {\em basis} of ${\cal L}$ is a set of $n$ independent atoms. 
\begin{Ex}\label{ex:01} 
	The partial order $\preceq$ on $\{0,1\}^n$ is defined as 
	the vector order $\leq$.  
	Then $\{0,1\}^n$ is a complemented modular lattice of rank $n$ 
	(more precisely it is a Boolean lattice).
	The meet and join are given by
	$u \wedge v = \min (u,v)$
	and $u \vee v = \max (u,v)$. 
	Unit vectors $e_1,e_2,\ldots,e_n$ form the unique basis of $\{0,1\}^n$.
\end{Ex}

\begin{Ex}
	Let ${\cal L}$ be the family of all vector subspaces $X$ of a vector space $V$ 
	of dimension $n$. Regard ${\cal L}$ as a poset with respect to inclusion order $\subseteq$.
	Then ${\cal L}$ is a complemented modular lattice of rank $n$, 
	where $\wedge$ and $\vee$ are equal to $\cap$ and $+$, respectively, 
	and $X \mapsto \dim X$ 
	is a valuation (and the rank function). 
	A basis of ${\cal L}$ is precisely the set of 
	$1$-dimensional vector spaces corresponding to a basis of $V$.
\end{Ex}

In the following three lemmas, 
${\cal L}$ is assumed to be 
a complemented modular lattice of rank $n$.  
\begin{Lem}\label{lem:complement}
Let $C$ be a maximal chain in ${\cal L}$, and $p$ an element in ${\cal L}$.
	\begin{itemize}
		\item[{\rm (1)}] There exists a complement $q$ of $p$ in ${\cal L}$
		such that the sublattice $\langle [\bar 0,p] \cup [\bar 0,q] \rangle$ contains $C$.
		\item[{\rm (2)}] In addition,
		if a basis $a_1,a_2,\ldots,a_k$ of 
		$[\bar 0,p]$ generates $C \wedge p$ and a basis $b_1,b_2,\ldots, b_l$ 
		of $[p,\bar 1]$ generates $C \vee p$, 
		then $a_1,a_2,\ldots, a_k,b_1 \wedge q, b_2 \wedge q,\ldots,b_l \wedge q$
		is a basis of ${\cal L}$ that
		generates $C$.
	\end{itemize}
\end{Lem}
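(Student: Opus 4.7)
For (1), Lemma~\ref{lem:isomorphism}(2) (applied with $p \wedge q = \bar 0$) identifies $\langle [\bar 0,p] \cup [\bar 0,q] \rangle$ with $[\bar 0,p] \times [\bar 0,q]$, so an element $x \in {\cal L}$ lies in this sublattice exactly when $x = (x \wedge p) \vee (x \wedge q)$. Hence the task reduces to producing a complement $q$ of $p$ satisfying $c_i = (c_i \wedge p) \vee (c_i \wedge q)$ for every $c_i \in C$, where I write $C = (\bar 0 = c_0 \prec c_1 \prec \cdots \prec c_n = \bar 1)$.

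I plan to build $q$ by induction along the chain $\{c \vee p : c \in C\}$ inside $[p,\bar 1]$. Write its distinct values as $p = d_0 \prec d_1 \prec \cdots \prec d_l = \bar 1$, and let $i_j$ be the smallest index with $c_{i_j} \vee p = d_j$. The induction produces $\bar 0 = q_0 \prec q_1 \prec \cdots \prec q_l = q$ maintaining the invariants $q_j \wedge p = \bar 0$, $q_j \vee p = d_j$, and $q_j \preceq c_{i_j}$. The inductive step is carried out entirely inside the complemented modular interval $[q_{j-1}, c_{i_j}]$ (which contains $q_{j-1}$ thanks to the previous invariant $q_{j-1} \preceq c_{i_{j-1}} \preceq c_{i_j}$): here $d_{j-1} \wedge c_{i_j}$ is strictly below $c_{i_j}$ because $c_{i_j} \vee p = d_j \neq d_{j-1}$, so by (cm1)+(cm2) some atom $\alpha$ of $[q_{j-1},c_{i_j}]$ lies outside $d_{j-1}$, and $q_j := \alpha$ verifies all three invariants, the rank one via $r(\alpha \wedge p) + r(\alpha \vee p) = r(\alpha) + r(p)$. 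Once $q = q_l$ is in hand, for each $c_i$ with $c_i \vee p = d_j$, a short rank-comparison argument using (rc) gives first $c_i \wedge q = d_j \wedge q = q_j$ and then $(c_i \wedge p) \vee q_j = c_i$, establishing (1).

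For (2), Lemma~\ref{lem:isomorphism}(1) gives an isomorphism $[p,\bar 1] \to [\bar 0,q]$, $x \mapsto x \wedge q$, under which the basis $b_1,\ldots,b_l$ of $[p,\bar 1]$ transports to a basis $b_1 \wedge q,\ldots,b_l \wedge q$ of $[\bar 0,q]$. Combined with $a_1,\ldots,a_k$ and the complementarity $p \wedge q = \bar 0$, $p \vee q = \bar 1$, this yields $n$ independent atoms of ${\cal L}$. That they generate $C$ is immediate from the decomposition $c_i = (c_i \wedge p) \vee (c_i \wedge q)$ proved in (1): the first summand is a join of certain $a_i$'s by the hypothesis on $C \wedge p$, and the second, being the image of $c_i \vee p$ under the isomorphism just used, is the corresponding join of $b_j \wedge q$'s by the hypothesis on $C \vee p$.

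The main obstacle is the inductive step of (1): the new cover $q_j$ must simultaneously sit below $c_{i_j}$ and, under $\cdot \vee p$, advance from $d_{j-1}$ to $d_j$. Both demands are met by choosing $\alpha$ inside $[q_{j-1}, c_{i_j}]$ and outside $d_{j-1}$; once this choice is secured, the remaining verifications are routine rank computations combined with (rc).
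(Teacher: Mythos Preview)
Your proof is correct. Part (2) is essentially the paper's argument: both rely on the identity $c_i \wedge q = (c_i \vee p) \wedge q$ (which follows from $c_i = (c_i\wedge p)\vee(c_i\wedge q)$) to transport $C\vee p$ through the isomorphism $[p,\bar1]\to[\bar0,q]$ and combine with $C\wedge p$.

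Part (1), however, is a genuinely different construction. The paper inducts on the rank $n$ of ${\cal L}$: it drops to the interval $[\bar 0,x_{n-1}]$, applies induction there (splitting into the cases $p\preceq x_{n-1}$ and $p\not\preceq x_{n-1}$), and then adjusts the resulting complement by one atom. Your argument instead inducts along the chain $C\vee p=(d_0\prec\cdots\prec d_l)$ inside $[p,\bar 1]$, growing $q$ one atom at a time while keeping the running $q_j$ underneath the appropriate $c_{i_j}$; the covering relation $d_{j-1}\precdot d_j$ (which you use implicitly when concluding $\alpha\vee p=d_j$) comes from $C$ being maximal. Your route is more explicit: it produces the flag $q_0\prec q_1\prec\cdots\prec q_l=q$ directly and immediately yields the pointwise identity $c_i\wedge q=q_j$, which then feeds into (2). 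The paper's route is a cleaner structural induction but gives less explicit control over $q$. Both are short and rely on the same tools (Lemma~\ref{lem:isomorphism}, (cm1)--(cm2), and rank comparison).
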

\begin{proof}
	(1). We use induction on the rank $n$ of ${\cal L}$. 
	Suppose $C = (\bar 0 = x_0 \prec x_1 \prec \cdots \prec x_n = \bar 1)$.
	We may assume that $n \geq 2$.
	
	Case 1: $p \preceq x_{n-1}$.
	Consider interval $[\bar 0, x_{n-1}]$, which is complemented modular by (cm2), 
	and consider maximal chain $C' = (\bar 0 = x_0 \prec x_1 \prec \cdots \prec x_{n-1})$ in $[\bar 0, x_{n-1}]$.
	By induction, there is a complement $q' \in [\bar 0,x_{n-1}]$ of $p$ 
	such that $\langle [\bar 0,p] \cup [\bar 0,q'] \rangle$ contains $C'$.
	Now $q' \preceq x_{n-1} \prec x_n = \bar 1$. 
	We can choose a complement $q \in [q', \bar 1]$ of $x_{n-1}$ (which covers $q'$).
	Then $q$ is a desired complement of $p$ in ${\cal L}$.
	Indeed, $p \vee q = p \vee q' \vee q = x_{n-1} \vee q = \bar 1$.
	Also $p \wedge q = {\bar 0}$ follows 
	from (rc) and $r(q) - r(\bar 0) = 1 + r(q') - r(\bar 0) = 1 + r(x_{n-1}) - r(p) = r(\bar 1) - r(p) = r(q) - r(p \wedge q)$ (implying $r(\bar 0) = r(p \wedge q)$).

	Case 2: $p \not \preceq x_{n-1}$.
	Consider $p' := x_{n-1} \wedge p$.
	Then $p'$ is covered by $p$; consider 
	(\ref{eqn:valuation}) for the setting $v =r$, $x = x_{n-1}$, and $y =p$.
	As above, 
	consider complemented modular lattice $[\bar 0, x_{n-1}]$ and
	maximal chain $C'$.
	By induction, there is a complement $q \in [\bar 0,x_{n-1}]$ of $p'$ such that 
	$\langle [\bar 0,p'] \cup [\bar 0,q] \rangle$ contains $C'$.
	Since $p \vee q = \bar 1$, 
	the sublattice $\langle [\bar 0,p] \cup [\bar 0,q] \rangle$ contains $C$.

	(2). Consider $x \in C$. Then 
	$x \wedge p$ is the join of a subset of $a_1,a_2,\ldots,a_k$, 
	say $x \wedge p = a_1 \vee a_2 \vee \cdots \vee a_{k'}$.
	Also $x \vee p$ is represented as $x \vee p = b_1 \vee b_2 \vee \cdots \vee b_{l'}$.
	Now $x = (x \wedge p) \vee (x \wedge q) = (x \vee p) \wedge (x \vee q)$ holds by 
	Lemma~\ref{lem:isomorphism}~(2) (applied to the opposite).
	From $x = (x \vee p) \wedge (x \vee q)$, 
	we have $x \wedge q = (x \vee p) \wedge (x \vee q) \wedge q = (x \vee p) \wedge q$.
	Thus $x = (x \wedge p) \vee ((x \vee p) \wedge q) 
	= a_1 \vee a_2 \vee \cdots \vee a_{k'} \vee ((b_1 \vee b_2 \vee \cdots \vee b_{l'}) \wedge q) = a_1 \vee a_2 \vee \cdots \vee a_{k'} \vee (b_1 \wedge q) \vee (b_2 \wedge q) \vee \cdots \vee (b_{l'} \vee q)$, where the last equality follows from Lemma~\ref{lem:isomorphism}~(1) and $b_i \succeq p$.
	Therefore $a_1,a_2,\ldots, a_k,b_1 \wedge q, b_2 \wedge q,\ldots,b_l \wedge q$ generate $C$. They form a basis 
	since their join equals $\bar 1$ (by Lemma~\ref{lem:isomorphism}) with $k+l = n$. 
\end{proof}	

The sublattice $\langle a_1,a_2,\ldots,a_n \rangle$ 
generated by a basis $a_1,a_2,\ldots,a_n$
is isomorphic to Boolean lattice $\{0,1\}^n$ by 
$e_{i_1} + e_{i_2} + \cdots + e_{i_k} \mapsto 
a_{i_1} \vee a_{i_2} \vee \cdots \vee a_{i_k}$.
We call such a sublattice a {\em $\{0,1\}^n$-skeleton}.
The next lemma is a folklore in theory of modular lattice, and is
essentially one of axioms of building; see (B2) in Section~\ref{subsec:building}.
\begin{Lem}[{See \cite[Theorem 363]{Gratzer}}]\label{lem:twochains}
%	Let ${\cal L}$ be a complemented modular lattice of rank $n$.
	For two chains $C,D$ in ${\cal L}$, 
	there exists a $\{0,1\}^n$-skeleton in ${\cal L}$
	containing $C$ and $D$.
\end{Lem}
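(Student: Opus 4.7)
The plan is to proceed by induction on the rank $n$ of ${\cal L}$, after first extending $C$ and $D$ to maximal chains; this loses nothing, since any $\{0,1\}^n$-skeleton containing the extensions automatically contains the originals. The cases $n\le 1$ are immediate.

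For the inductive step, write $C=(\bar 0=c_0\prec c_1\prec\cdots\prec c_n=\bar 1)$ and $D=(\bar 0=d_0\prec d_1\prec\cdots\prec d_n=\bar 1)$, and set $p:=c_{n-1}$, a coatom. By (cm2), $[\bar 0,p]$ is complemented modular of rank $n-1$. I would then meet $D$ with $p$: a short rank calculation using the valuation identity and the fact that $d_j\vee p\in\{p,\bar 1\}$ (because $[p,\bar 1]$ has rank $1$) gives $r(d_j\wedge p)=j$ while $d_j\preceq p$ and $r(d_j\wedge p)=j-1$ thereafter, so $D\wedge p$ contains exactly one repetition; deleting it yields a genuine chain $D^{\wedge}$ from $\bar 0$ to $p$. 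The induction hypothesis applied to the two chains $(c_0,\ldots,c_{n-1})$ and $D^{\wedge}$ inside $[\bar 0,p]$ supplies a basis $a_1,\ldots,a_{n-1}$ of $[\bar 0,p]$ whose Boolean skeleton contains both.

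The crux is the choice of the final atom $a_n$. Let $j^{*}$ be the smallest index with $d_{j^{*}}\not\preceq p$ (take $j^{*}=n$ if only $\bar 1$ leaves $[\bar 0,p]$); the rank calculation above forces $d_{j^{*}-1}=d_{j^{*}}\wedge p$. Inside the complemented modular interval $[\bar 0,d_{j^{*}}]$, (cm1) guarantees an atom $a_n$ with $a_n\preceq d_{j^{*}}$ and $a_n\not\preceq d_{j^{*}-1}$. If one had $a_n\preceq p$, then $a_n\preceq p\wedge d_{j^{*}}=d_{j^{*}-1}$, a contradiction, so $a_n\not\preceq p$ and hence $p\vee a_n=\bar 1$, making $a_1,\ldots,a_n$ a basis of ${\cal L}$.

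It then remains to verify that the Boolean skeleton $\langle a_1,\ldots,a_n\rangle$ contains $C\cup D$. For $C$, every $c_j$ with $j\le n-1$ lies in the smaller skeleton and $c_n=\bar 1$ is the top. For $D$, elements $d_j$ with $j<j^{*}$ already sit inside $[\bar 0,p]$, while for $j\ge j^{*}$ I check $d_j=(d_j\wedge p)\vee a_n$ by the rank-comparison trick (rc): both sides are $\preceq d_j$, and using $(d_j\wedge p)\wedge a_n=\bar 0$ together with the valuation identity produces equal ranks. The main obstacle is precisely the engineering of $a_n$: it must simultaneously complete $a_1,\ldots,a_{n-1}$ to a basis of ${\cal L}$ and lie below every element of $D$ that escapes $[\bar 0,p]$. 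The key insight is to perform the complementation inside $[\bar 0,d_{j^{*}}]$ rather than in all of ${\cal L}$; after that, the checks via Lemma~\ref{lem:isomorphism} and (rc) are routine.
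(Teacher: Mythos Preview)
Your proof is correct and follows the same inductive skeleton as the paper: extend to maximal chains, restrict to the coatom interval $[\bar 0,c_{n-1}]$, meet $D$ down into it, apply induction to obtain a basis $a_1,\ldots,a_{n-1}$ of $[\bar 0,c_{n-1}]$, and then adjoin one more atom to capture the part of $D$ that escapes.

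The only genuine difference is in how the last atom is produced. The paper invokes Lemma~\ref{lem:complement}(1) as a black box: applied with the chain $D$ and the element $p=c_{n-1}$, it directly supplies an atom $a$ complementing $p$ with $\langle [\bar 0,p]\cup[\bar 0,a]\rangle \supseteq D$, and the proof ends there. You instead construct $a_n$ by hand, locating the first index $j^{*}$ where $D$ leaves $[\bar 0,p]$, choosing an atom below $d_{j^{*}}$ but not below $d_{j^{*}-1}=d_{j^{*}}\wedge p$, and then verifying $d_j=(d_j\wedge p)\vee a_n$ for $j\ge j^{*}$ via the valuation identity and (rc). Your route is more self-contained (it does not lean on the somewhat involved induction inside Lemma~\ref{lem:complement}), while the paper's route is shorter on the page because the work has been outsourced to the preceding lemma; the two constructions in fact produce the same atom.
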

\begin{proof}
	We use induction on $n$; the case of $n=1$ is obvious.
	Thus $n \geq 2$.
	We may assume that 
	$C = (\bar 0 = x_0 \prec x_1 \prec \cdots \prec x_n = \bar 1)$ and
	$D = (\bar 0 = y_0 \prec y_1 \prec \cdots \prec y_n = \bar 1)$.
	Consider complemented modular lattice $[\bar 0, x_{n-1}]$, and
	maximal chains $C' := (\bar 0 = x_0 \prec x_1 \prec \cdots \prec x_{n-1})$ 
	and $D' := (\bar 0 = y_0 \wedge x_{n-1}  \preceq y_1 \wedge x_{n-1} 
	\preceq \cdots \preceq y_{n} \wedge x_{n-1} = x_{n-1})$.
	By induction, there is a $\{0,1\}^{n-1}$-skeleton ${\cal F}' = \langle a_1,a_2,\ldots,a_{n-1} \rangle$ in $[\bar 0, x_{n-1}]$
	containing $C',D'$.
	By the previous lemma, 
	we can choose a complement $a$ of $x_{n-1}$ in ${\cal L}$
	such that $\langle [\bar 0,x_{n-1}] \cup [\bar 0,a]\rangle$ contains $D$.
	Here $a$ is an atom of ${\cal L}$, and
	$a_1,a_2,\ldots,a_{n-1}, a$ form a basis of ${\cal L}$.
	Thus the $\{0,1\}^n$-skeleton ${\cal F} := \langle a_1,a_2,\ldots,a_{n-1},a \rangle$
	contains $C$ and $D$, as required.
\end{proof}

Let
$C = (\bar 0=x_0 \prec x_1 \prec \cdots \prec x_n= \bar 1)$ be a maximal chain in ${\cal L}$.
The {\em relative position} $y_C \in \{0,1\}^n$ of $y \in {\cal L}$ with respect to $C$ 
is defined as follows.
Choose a $\{0,1\}^n$-skeleton ${\cal F}$ containing $C,y$ via Lemma~\ref{lem:twochains}. 
Regard ${\cal F}$ as $\{0,1\}^n$, 
where we assume $x_i = x_{i-1} + e_i$ for $i=1,2,\ldots,n$ by relabeling.
Define the relative position $y_C \in \{0,1\}^n$ 
as the 0,1-vector corresponding to $y$ in this coordinate ${\cal F}$.
\begin{Lem}\label{lem:indep}
	The relative position $y_C$ of 
	$y$ is independent of 
	the choice of a $\{0,1\}^n$-skeleton containing $C,y$.
\end{Lem}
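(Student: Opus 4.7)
The plan is to characterize each coordinate $(y_C)_i$ intrinsically in terms of $y$ and the chain $C$ alone, with no reference to any chosen $\{0,1\}^n$-skeleton. Concretely, I would prove the equivalence
$$
(y_C)_i = 1 \iff x_{i-1} \vee y = x_i \vee y.
$$
Since the right-hand side is defined purely from the data $y, x_{i-1}, x_i$ inside ${\cal L}$, this immediately forces $y_C$ to agree for any two choices of skeleton containing $C$ and $y$, which is the content of the lemma.

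To verify the equivalence, I would fix one such $\{0,1\}^n$-skeleton ${\cal F} = \langle a_1, a_2, \ldots, a_n \rangle$, relabeled so that $x_i = a_1 \vee a_2 \vee \cdots \vee a_i$, and carry out the computation inside ${\cal F}$. Under the identification ${\cal F} \simeq \{0,1\}^n$ with $a_i \leftrightarrow e_i$ (which is the one supplied just above the lemma), the element $y$ becomes the indicator vector of some subset $S \subseteq \{1,\ldots,n\}$, by definition of $y_C$. Using the componentwise join on $\{0,1\}^n$ recalled in Example~\ref{ex:01}, $x_j \vee y$ corresponds to the indicator of $\{1,\ldots,j\} \cup S$, so the step from $x_{i-1} \vee y$ to $x_i \vee y$ adds the element $i$ exactly when $i \notin S$. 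Hence the two joins coincide iff $i \in S$ iff $(y_C)_i = 1$.

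There is essentially no hard step: the substantive insight is to identify the right intrinsic invariant, namely whether $x_i$ is absorbed by $x_{i-1} \vee y$, and once this is singled out, the verification reduces to a one-line calculation in the Boolean lattice. No circularity is introduced by performing this calculation in a particular skeleton, since we are only using that skeleton to \emph{exhibit} a description of each coordinate that does not mention the skeleton. If desired, a parallel characterization on the meet side, $(y_C)_i = r(x_i \wedge y) - r(x_{i-1} \wedge y)$, can be read off by the same argument and gives an equally valid proof via Lemma~\ref{lem:valuation} and the rank-comparison principle (rc).
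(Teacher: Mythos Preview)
Your proposal is correct and follows essentially the same approach as the paper: characterize each coordinate $(y_C)_i$ by an intrinsic lattice condition involving only $y$, $x_{i-1}$, and $x_i$. The paper uses the meet-side criterion $x_i \wedge y \succ x_{i-1} \wedge y$ (which you also note at the end), while you use the dual join-side criterion $x_{i-1} \vee y = x_i \vee y$; both verifications reduce to the same one-line Boolean computation.
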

\begin{proof}
	$y_C$ is exactly the sum of unit vectors $e_i$ 
	over indices $i$ with $e_i \leq y (= y_C)$. 
	Here $e_i \leq y$ ($\Leftrightarrow e_i \preceq y$) 
	is equivalent to the lattice condition 
	$x_i \wedge y \succ x_{i-1} \wedge y$ ($\Leftrightarrow$ $\min (x_i,y) - \min (x_{i-1},y) = e_i$), 
	which is independent of the $\{0,1\}^n$-skeleton.
\end{proof}

\subsection{Spherical/affine buildings of type A}\label{subsec:building}
%In this section, we give definitions of spherical/affine building of type A and summarize basic properties.
%
We first introduce the spherical/affine Coxeter complex of type A 
(without group-theoretic language).
We consider the decomposition of $\RR^n/\RR {\bf 1}$ 
(the quotient space of $\RR^n$ by $\RR {\bf 1}$)
by the following hyperplanes in $\RR^n$:
\begin{equation}
H_{ij} := \{ x \in \RR^n \mid x_i = x_j \} \quad (1 \leq i < j \leq n).
\end{equation}
The closure of each connected component of 
$(\RR^n \setminus \bigcup_{1 \leq i<j \leq n} H_{ij}) / \RR {\bf 1}$ is a simplicial cone that is the conical hull of $n-1$ vectors 
\begin{equation}\label{eqn:Coxeter_s}
e_{\sigma(1)},\ e_{\sigma(1)} + e_{\sigma(2)},\ 
\ldots,\  e_{\sigma(1)} + e_{\sigma(2)} + \cdots + e_{\sigma(n-1)}
\end{equation}
modulo $\RR {\bf 1}$
for a permutation $\sigma$ on $\{1,2,\ldots,n\}$.
The {\em spherical Coxeter complex of type A} is the simplicial complex
on $\{0,1\}^n \setminus \{{\bf 0}, {\bf 1}\}$
whose maximal simplices have vertices of form (\ref{eqn:Coxeter_s}). 

Next we introduce the affine Coxeter complex of type A.
Consider the decomposition of $\RR^n/\RR {\bf 1}$ by 
the following affine hyperplanes:
\begin{equation}
H_{ij,k} := \{ x \in \RR^n \mid x_i = x_j + k\} \quad (1 \leq i < j \leq n, k \in \ZZ).
\end{equation}
The closure of each connected component of 
$(\RR^n \setminus \bigcup_{1 \leq i<j \leq n, k \in \ZZ} H_{ij,k}) / \RR {\bf 1}$ is a simplex that is the convex hull of $n$ vertices
\begin{equation}\label{eqn:Coxeter_e}
z,\ z + e_{\sigma(1)},\ z+ e_{\sigma(1)} + e_{\sigma(2)},\ \ldots,\ z + e_{\sigma(1)} + e_{\sigma(2)} + \cdots + e_{\sigma(n-1)}
\end{equation}
modulo $\RR {\bf 1}$ for a permutation $\sigma$ on $\{1,2,\ldots,n\}$ 
and $z \in \ZZ^n$.
The {\em affine Coxeter complex of type A} 
is the simplicial complex on $\ZZ^n / \RR {\bf 1}$ 
whose maximal simplices have vertices of form (\ref{eqn:Coxeter_e}).  

A {\em spherical/affine building of type A} 
is a simplicial complex $\varDelta$ having a family of 
subcomplexes, called {\em apartments}, satisfying the following axiom:
\begin{itemize}
	\item[(B1)] Each apartment is isomorphic 
	to the spherical/affine Coxeter complex of type~A.
	\item[(B2)] For two simplices $A,B \in \varDelta$, 
	there is an apartment $\varSigma$ containing $A,B$.
	\item[(B3)] If two apartments $\varSigma, \varSigma'$ 
	contain simplices $A,B$, then there is an isomorphism $\varphi: \varSigma \to \varSigma'$ fixing $A$ and $B$ pointwise, 
	i.e., $\varphi(x) = x$ for $x \in A \cup B$.
\end{itemize}
The definition of general spherical/affine buildings is 
obtained by using general spherical/affine Coxeter complex in axiom (B1). 
The basic properties of buildings that we will use 
are summarized as follows; see~\cite[Chapter 4]{BuildingBook}.
They hold for general spherical/affine buildings, not restricted to type~A.
\begin{itemize}
	\item[(col)] A spherical/affine building (of type A) with dimension $n$
	admits a labeling $\ell: \varDelta^0 \to \{0, 1,2,\ldots,n\}$, called a {\em coloring}, 
	with the property that any distinct vertices $x,y$ 
	in any simplex have distinct colors $\ell(x) \neq \ell(y)$.
	A coloring is automatically determined from 
	any coloring (i.e., bijection to $\{0,1,\ldots,n\}$) of any maximal simplex,
	Moreover any two colorings $\ell,\ell'$ are equivalent in the sense that  $\ell' = \kappa \circ \ell$ holds
	for some bijection $\kappa$ on $\{0, 1,2,\ldots,n\}$.
	\item[(B3$^+$)] The isomorphism $\varphi$ in (B3) can be taken to be color-preserving, 
	i.e., $\ell(\varphi(x)) = \ell(x)$.
	\item[(ret)] For an apartment $\varSigma$ and a maximal simplex $A$ in $\varSigma$, 
	define a map $\rho_{\varSigma,A}: \varDelta \to \varSigma$ as follows:
	For $B \in \varDelta$, choose an apartment $\varSigma'$ containing $A$ and $B$ via (B2), 
	an isomorphism $\varphi: \varSigma' \to \varSigma$ fixing $A$ via (B3), 
	and let $\rho_{\varSigma, A}(B) := \varphi(B)$.
	This map $\rho_{\varSigma,A}$ is independent of the choice 
	of an apartment $\varSigma'$, and is a (color-preserving) retraction to $\varSigma$, i.e., $\rho_{\varSigma,A}(B) = B$ for $B \in \varSigma$. The map $\rho_{\varSigma,A}$ 
	is called the {\em canonical retraction}.
\end{itemize}

The geometric realization $|\varDelta|$ 
of an affine building $\varDelta$ (of type A) 
admits a natural ``Euclidean" metric; see \cite[Chapter 11]{BuildingBook}.
As seen above, 
the affine Coxeter complex $\varSigma$ 
is a triangulation of $\RR^n / \RR {\bf 1}$, 
and the geometric realization $|\varSigma|$ 
is naturally regarded as $\RR^n / \RR {\bf 1}$.
Define a Euclidean metric $d_\varSigma$ on $|\varSigma| = \RR^n / \RR {\bf 1}$
by $d_\varSigma(x + \RR {\bf 1},y + \RR {\bf 1}) := \|\bar x - \bar y \|_2$, 
where $\bar x$ is the unique vector with $\bar x - x \in \RR {\bf 1}$ 
and $\sum_{i=1}^n \bar x_i = 0$.
For two points $x,y$ in the geometric realization $|\varDelta|$ of an affine building $\varDelta$, define $d(x,y) := d_{\varSigma}(x,y)$ 
by choosing an apartment $\varSigma$ with $x,y \in |\varSigma|$ (via (B2)).
In fact, $d(x,y)$ is independent of the choice of an apartment $\varSigma$.
Hence $d$ is a well-defined distance function, and 
$|\varDelta|$ becomes a metric space.
The metric space $|\varDelta|$ has a nice property on geodesics, where
a {\em geodesic} between two point $x,y$ is 
a path $\gamma: [0,1] \to |\varDelta|$ with $\gamma(0) =x$, $\gamma(1) = y$, and 
$d(\gamma(s), \gamma(t)) = |\gamma(s) - \gamma(t)|d(x,y)$
for $s,t \in [0,1]$.
\begin{itemize}
	\item[(geo)] $|\varDelta|$ is uniquely geodesic, that is,  
	there is a unique geodesic between any two points $x,y \in |\varDelta|$.
\end{itemize}
This property is a consequence of the fact that $|\varDelta|$ is a  
{\em CAT(0) space}; see \cite{BuildingBook,BrHa}.

In the following, we explain 
the relationship (of Theorem~\ref{thm:main0}) between complemented modular lattices 
and spherical buildings of type A.
We here provide a larger part of the proof, 
since there seems no reference including such a direct proof  
without group-theory and incidence-geometry arguments, 
and the proof of Theorem~\ref{thm:main} goes completely in parallel.

Notice that the spherical Coxeter complex of type A 
is nothing but the order complex of  
poset $\{0,1\}^n \setminus \{{\bf 0}, {\bf 1}\}$.
\begin{Thm}[\cite{Tits}]\label{thm:spherical-1}
	Let ${\cal L}$ be a complemented modular lattice of rank $n \geq 3$.
	Then the order complex ${\cal O}({\cal L} \setminus \{\bar 0,\bar 1\})$ 
	is a spherical building of type A with dimension $n-2$.
\end{Thm}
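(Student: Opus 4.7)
The plan is to realize ${\cal O}({\cal L}\setminus\{\bar 0,\bar 1\})$ as a spherical building by taking the $\{0,1\}^n$-skeletons of ${\cal L}$ as apartments and verifying the three axioms. For each basis $\{a_1,\ldots,a_n\}$ of ${\cal L}$, the skeleton ${\cal F} = \langle a_1,\ldots,a_n\rangle$ is isomorphic to $\{0,1\}^n$, so the order subcomplex $\varSigma_{\cal F}$ it supports on ${\cal F}\setminus\{\bar 0,\bar 1\}$ is isomorphic to the order complex of $\{0,1\}^n\setminus\{{\bf 0},{\bf 1}\}$, which by definition is the spherical Coxeter complex of type A. Hence (B1) holds, and the dimension is $n-2$ since a maximal chain in ${\cal F}\setminus\{\bar 0,\bar 1\}$ has $n-1$ elements. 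Axiom (B2) follows immediately from Lemma~\ref{lem:twochains}: two simplices correspond to two chains $A,B$ in ${\cal L}$, and the lemma provides a $\{0,1\}^n$-skeleton containing both.

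The substantive step is (B3). Given $\varSigma = \varSigma_{\cal F}$ and $\varSigma' = \varSigma_{{\cal F}'}$ coming from bases $(a_i),(a'_i)$, both containing simplices $A,B$, I need a lattice isomorphism $\varphi:{\cal F}\to{\cal F}'$ fixing every element of $A\cup B$; the induced simplicial map will then realize (B3). Any isomorphism between the two Boolean skeletons is determined by a permutation $\pi$ of $\{1,\ldots,n\}$ pairing $a_i$ with $a'_{\pi(i)}$, and $\varphi|_{A\cup B} = \mathrm{id}$ is equivalent to
\[
\pi\bigl(\{i:a_i\preceq y\}\bigr)=\{j:a'_j\preceq y\}\qquad(y\in A\cup B).
\]
My plan is to build $\pi$ by aligning bases of ${\cal F}$ and ${\cal F}'$ simultaneously via Lemma~\ref{lem:complement}(2): start from a maximal chain $C\subseteq{\cal F}$ extending $A$, apply Lemma~\ref{lem:complement}(2) iteratively to the remaining elements of $B$ to refine a basis of ${\cal F}$ in which each $y\in A\cup B$ becomes the join of a prescribed index set of atoms, and perform the matching construction in ${\cal F}'$ using the same index labels.

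The main obstacle is showing that the two refinements can be carried out consistently, so that the displayed set-system identity actually holds. The key tool is Lemma~\ref{lem:indep}: the relative-position vector $y_C\in\{0,1\}^n$ is intrinsic to $y$ and $C$, independent of the enclosing $\{0,1\}^n$-skeleton. Consequently, if the maximal chains $C\subseteq{\cal F}$ and $C'\subseteq{\cal F}'$ produced by the two refined bases are arranged to agree as chains of ${\cal L}$ on $A$ and to assign equal intrinsic coordinates to each $y\in B$, the indicator sets $\{i:a_i\preceq y\}$ and $\{j:a'_j\preceq y\}$ coincide, and $\pi$ is determined. I would carry out the alignment by induction on the rank $n$, in the spirit of Lemma~\ref{lem:twochains}: pick a suitable non-trivial element $p\in A\cup B$, split each of ${\cal F},{\cal F}'$ as a product $[\bar 0,p]\times[\bar 0,q]$ through Lemma~\ref{lem:complement}(1) and Lemma~\ref{lem:isomorphism}(2), apply induction on the rank-$(n-r(p))$ factors to the residual simplices, and glue the resulting compatible bases. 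Once $\pi$ is produced, checking that the induced $\varphi$ is a lattice isomorphism fixing $A\cup B$ is immediate.
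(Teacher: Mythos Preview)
Your treatment of (B1) and (B2) coincides with the paper's. For (B3) you correctly identify Lemma~\ref{lem:indep} as the key ingredient, but your execution is more complicated than necessary and, as written, has a gap.

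The difficulty is in how you apply Lemma~\ref{lem:indep}. That lemma says $y_C$ is independent of the \emph{skeleton} containing $C$ and $y$, but it certainly depends on the chain $C$ itself. In your plan you take a maximal chain $C \subseteq {\cal F}$ extending $A$ and a separate chain $C' \subseteq {\cal F}'$; even if both extend $A$, there is no reason that $y_C = y_{C'}$ for $y \in B$, so the displayed set-identity does not follow. You acknowledge this and propose to ``arrange'' the two chains via an inductive splitting through Lemma~\ref{lem:isomorphism}(2), but that step is not justified. Also, the phrase ``refine a basis of ${\cal F}$'' is unclear: the atoms of a Boolean skeleton are uniquely determined, so only their labelling is free, and Lemma~\ref{lem:complement}(2) produces bases of ${\cal L}$, not new atoms of a fixed ${\cal F}$.

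The paper bypasses all of this with one observation: use a \emph{single common} maximal chain. Assuming $A$ is a maximal simplex (the standard reduction for (B3)), the chain $C := A \cup \{\bar 0,\bar 1\}$ lies in both ${\cal F}$ and ${\cal G}$. Relabel each basis along $C$ so that the $k$th element of $C$ equals $f_1 \vee \cdots \vee f_k = g_1 \vee \cdots \vee g_k$, and define $\varphi$ by $f_{i_1} \vee \cdots \vee f_{i_m} \mapsto g_{i_1} \vee \cdots \vee g_{i_m}$. Lemma~\ref{lem:indep} now applies with the \emph{same} $C$ in both skeletons: for every $x \in {\cal F} \cap {\cal G}$ the vector $x_C$ is intrinsic, hence $\varphi(x) = x$. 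Thus $\varphi$ is the identity on all of ${\cal F} \cap {\cal G}$, so $B$ is fixed automatically---no separate treatment of $B$, no induction, no refinement.
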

\begin{proof}
	We show that subcomplexes of ${\cal O}({\cal L})$ induced by  
	$\{0,1\}^n$-skeletons (deleted by ${\bf 0},{\bf 1}$) satisfy the axiom of apartment.  
	They are obviously isomorphic to the spherical Coxeter complex of type A, implying~(B1).
	Consider two maximal simplices $A,B$, where $A \cup \{\bar 0,\bar 1\}$ and $B \cup \{\bar 0,\bar 1\}$ 
	are maximal chains in ${\cal L}$.
	By Lemma~\ref{lem:twochains}, there is a $\{0,1\}^n$-skeleton containing $A,B$. 
	This implies (B2).
	Suppose that two $\{0,1\}^n$-skeletons ${\cal F},{\cal G}$ contain $A,B$. 
	Suppose further that ${\cal F} = \langle f_1,f_2,\ldots,f_n \rangle$
	and ${\cal G} = \langle g_1,g_2,\ldots,g_n \rangle$ for bases $f_1,f_2,\ldots,f_n$ 
	and $g_1,g_2,\ldots,g_n$ of ${\cal L}$.
	By renumbering,  
	we can assume that 
	$A = \{ g_1 \vee g_2 \vee \cdots \vee g_k \}_{k=1,2,\ldots,n-1} = \{ f_1 \vee f_2 \vee \cdots \vee f_k \}_{k=1,2,\ldots,n-1}$. 
	Define $\varphi:{\cal F} \to {\cal G}$ 
	by $f_{i_1} \vee f_{i_2} \vee \cdots \vee f_{i_k} \mapsto g_{i_1} \vee g_{i_2} \vee \cdots \vee g_{i_k}$, which obviously induces an isomorphism between 
    ${\cal O}({\cal F})$ and ${\cal O}({\cal G})$.
	Also $\varphi(x) = x$ for $x \in A$.
	For $x \in {\cal F} \cap {\cal G}$, 
	if $x = f_{i_1} \vee f_{i_2} \vee \cdots \vee f_{i_k}$, i.e., 
	$e_{i_1} + e_{i_2} + \cdots + e_{i_k}$ is the relative position of $x$ with respect to $A \cup \{\bar 0, \bar 1\}$, 
	then $x = g_{i_1} \vee g_{i_2} \vee \cdots \vee g_{i_k}$ holds by Lemma~\ref{lem:indep}, 
	and hence $\varphi(x) = x$.
	Thus $\varphi$ is the identity on ${\cal F} \cap {\cal G}$, and consequently
	fixes $B$ pointwise, implying~(B3).
\end{proof}

Next we construct a complemented modular lattice 
from a spherical building $\varDelta$ of type A.
Our construction uses a special coloring; see (col) for colorings.
A {\em natural coloring} is a coloring $\ell$ with the property that
for every apartment $\varSigma$ there 
is an isomorphism $\rho :\varSigma \to {\cal O}(\{0,1\}^n \setminus \{{\bf 0}, {\bf 1}\})$ 
with $\ell(x) = \sum_{i=1}^n \rho(x)_i$ for all $x \in \varSigma^0$.
In fact, a natural coloring coincides 
with the {\em natural ordering} in the sense of~\cite{Tits}.
In general, 
a coloring is obtained, in group-theoretic way, 
by associating each vertex with one of generators of the Coxeter 
group corresponding to the Coxeter complex.
A natural ordering is the ordering of the generators
so that consecutive numbers are assigned to adjacent generators 
in the Dynkin diagram of type A (that is a path).
\begin{Lem}
	A natural coloring exists.
\end{Lem}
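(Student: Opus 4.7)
The plan is to define a candidate coloring via the canonical retraction onto a fixed apartment, verify that it is a valid coloring, and then exhibit on each apartment an isomorphism onto ${\cal O}(\{0,1\}^n \setminus \{{\bf 0}, {\bf 1}\})$ that realizes it naturally, by using the fact recorded in (col) that a coloring of a Coxeter complex is determined by its values on a single maximal simplex.

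First, I would fix an apartment $\varSigma_0$ of $\varDelta$, an isomorphism $\rho_0: \varSigma_0 \to {\cal O}(\{0,1\}^n \setminus \{{\bf 0}, {\bf 1}\})$, and a maximal simplex $A_0 \subseteq \varSigma_0$, and take the canonical retraction $\rho_{\varSigma_0,A_0}:\varDelta \to \varSigma_0$ from (ret). Set
\[
\ell(x) \ := \ \sum_{i=1}^{n} \rho_0(\rho_{\varSigma_0,A_0}(x))_{i} \qquad (x \in \varDelta^{0}).
\]
To see that $\ell$ is a coloring, let $x,y$ be distinct vertices of a simplex $B$. By (B2) pick an apartment $\varSigma'$ containing $A_0$ and $B$; by construction, $\rho_{\varSigma_0,A_0}|_{\varSigma'}$ is a (B3)-isomorphism $\varSigma' \to \varSigma_0$ fixing $A_0$, so $\rho_0(\rho_{\varSigma_0,A_0}(B))$ is a chain in $\{0,1\}^n \setminus \{{\bf 0},{\bf 1}\}$. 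Along such a chain the coordinate-sum is strictly increasing, which forces $\ell(x) \neq \ell(y)$.

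Next, for an arbitrary apartment $\varSigma$, I would produce an isomorphism $\rho:\varSigma \to {\cal O}(\{0,1\}^n \setminus \{{\bf 0},{\bf 1}\})$ with $\ell(x) = \sum_{i=1}^n \rho(x)_i$ on $\varSigma^0$. Choose a maximal simplex $B \subseteq \varSigma$; by (B2) an apartment $\varSigma_1$ containing $A_0$ and $B$; and by (B3) isomorphisms $\psi:\varSigma_1 \to \varSigma_0$ fixing $A_0$ and $\varphi:\varSigma_1 \to \varSigma$ fixing $B$. Define $\rho := \rho_0 \circ \psi \circ \varphi^{-1}$. The functions $\ell|_\varSigma$ and $x \mapsto \sum_i \rho(x)_i$ are both colorings of $\varSigma$; by (col) it suffices to check they agree on the single maximal simplex $B$. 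For $x \in B$ one has $\varphi^{-1}(x)=x$, and since $\varSigma_1 \ni A_0$ and $\psi$ fixes $A_0$, $\psi$ realizes the canonical retraction on $\varSigma_1$, giving $\sum_i \rho(x)_i = \sum_i \rho_0(\psi(x))_i = \sum_i \rho_0(\rho_{\varSigma_0,A_0}(x))_i = \ell(x)$. By (col) this agreement propagates to all of $\varSigma^0$, which is precisely the naturalness of $\ell$.

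The delicate step is the last one: it is (col) that allows us to upgrade chamber-level agreement on $B$ to apartment-wide agreement on $\varSigma$, which spares us from having to track how different choices of $\varphi,\psi$ compare vertex-by-vertex. The role of (B3) is precisely to supply $\varphi$ fixing $B$, thereby transporting the ``natural'' presentation from the reference apartment $\varSigma_1$ (inherited directly from $\rho_0$ via $\psi$) to an arbitrary apartment $\varSigma$ in a way that is coherent with the globally defined $\ell$.
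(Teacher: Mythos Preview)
Your argument is correct, and it takes a genuinely different route from the paper's.  The paper (giving only a sketch here and deferring details to the affine analogue, Lemma~\ref{lem:natural}) defines $\ell$ on one apartment, extends it via (col), and then verifies naturalness by a \emph{neighbor-counting} argument: in a naturally-colored apartment a vertex has $\binom{n}{|k-\ell(x)|}$ neighbors of color $k$, and this numerical invariant forces the bijection $\sigma$ relating two colorings to be the identity (after adjusting by a Dynkin-diagram symmetry of the Coxeter complex).  Your approach bypasses this combinatorics entirely: you manufacture, for each apartment $\varSigma$, one explicit isomorphism $\rho = \rho_0\circ\psi\circ\varphi^{-1}$ via (B2)--(B3), check that the induced coloring agrees with $\ell$ on a single chamber $B$, and then invoke the uniqueness clause in (col) (applied to the thin building~$\varSigma$) to propagate the agreement to all of $\varSigma^0$.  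This is shorter and more categorical, and it is exactly enough for the existential statement ``there is an isomorphism $\rho$ with $\ell = \sum_i\rho(\cdot)_i$''; the paper's neighbor-count argument proves a bit more (it pins down \emph{which} isomorphisms realize~$\ell$, up to the two Dynkin symmetries), at the cost of a case analysis.  One small point worth making explicit in your write-up: the appeal to (col) is to the apartment $\varSigma$ as a Coxeter complex in its own right, not to the ambient building, so you are using that the uniqueness of extension from a chamber holds already at the level of Coxeter complexes.
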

\begin{proof}[Sketch of proof]
Consider an arbitrary apartment $\varSigma$.
Then $\varSigma$ is isomorphic to the order complex 
${\cal O}(\{0,1\}^n \setminus \{{\bf 0},{\bf 1}\})$. 
Identify $\varSigma^0$ with $\{0,1\}^n \setminus \{{\bf 0},{\bf 1}\}$.
Define the color $\ell(x)$ of $x \in \varSigma^0$ as $\sum_{i=1}^n x_i$.
This is a coloring of $\varSigma$.
Extend this coloring to the whole coloring $\ell$ on $\varSigma^0$ (via (col)).
In fact, $\ell$ is a natural coloring.
One can see this fact by counting and comparing the numbers of 
neighbors of a vertex with respect to their color. 
We will do this for the affine case in the proof of Lemma~\ref{lem:natural}.
The proof goes along precisely the same way.
\end{proof}
Fix an arbitrary natural coloring $\ell$.
Define a partial order $\preceq$ on $\varDelta^0$ by 
$x \preceq y$ if $x$ and $y$ belong to a common simplex and $\ell(x) \leq \ell(y)$.
It turns out in the next proposition that $\preceq$ is a partial order.
Add a minimum element $\bar 0$ and a maximal element $\bar 1$ to $\varDelta^0$.
The resulting poset is denoted by ${\cal L}(\varDelta)$. 
\begin{Thm}[\cite{Tits}]
	Let $\varDelta$ be a spherical building of type A.
	Then ${\cal L}(\varDelta)$ is a complemented modular lattice, 
	where $\varDelta$ is equal to the order complex of 
	${\cal L}(\varDelta) \setminus \{\bar 0,\bar 1\}$.
\end{Thm}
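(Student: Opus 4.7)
The plan is to use apartments as Boolean-lattice ``charts'' on $\varDelta^0$, glue these local pictures into a global lattice via the axioms (B2), (B3$^+$) and the canonical retractions (ret), and deduce the algebraic properties from Boolean facts pushed through the natural coloring $\ell$. First I would verify that $\preceq$ is a partial order. Reflexivity is immediate; antisymmetry follows from (col), since two same-colored vertices of a common simplex must coincide; for transitivity of $x \preceq y \preceq z$, I would apply (B2) to $\{x,y\}$ and $\{y,z\}$ to obtain a common apartment $\varSigma \cong {\cal O}(\{0,1\}^n \setminus \{{\bf 0},{\bf 1}\})$ in which the relations translate, via the natural coloring, to $x \leq y \leq z$ componentwise in $\{0,1\}^n$, so $\{x,z\}$ is a chain and hence a simplex of $\varSigma \subseteq \varDelta$.

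Next I would define meet and join by choosing charts and then characterizing them by a universal property. For $x, y \in \varDelta^0$, apply (B2) to $\{x\}$ and $\{y\}$ to obtain an apartment $\varSigma$ containing both, and set $x \wedge y$ and $x \vee y$ to be the Boolean minimum and maximum in $\varSigma$ (collapsed to $\bar 0$ or $\bar 1$ as needed). These obviously satisfy $x \wedge y \preceq x,y$ and $x,y \preceq x \vee y$ in $\varSigma$, hence in $\varDelta$. The crucial step, which I expect to be the main obstacle, is the universal property inside $\varDelta$: if $w \preceq x,y$, then $w \preceq x \wedge y$. My plan is to apply the canonical retraction $\rho = \rho_{\varSigma, A}$ from (ret) with $A$ a maximal simplex of $\varSigma$ containing $x \wedge y$; since $\rho$ is color-preserving and sends simplices to simplices, $\rho(w)$ lies in $\varSigma$ and satisfies $\rho(w) \preceq x,y$ there, which by the Boolean structure of $\varSigma$ forces $\rho(w) \leq x \wedge y$ componentwise. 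To pass from $\rho(w)$ back to $w$, I would use (B2) on $\{w,x\},\{w,y\}$ to produce an apartment $\varSigma_w$ containing all three of $w,x,y$, note that the Boolean meet $u'$ computed in $\varSigma_w$ satisfies $w \preceq u'$ directly, and then combine the retraction computation with (B3$^+$) to conclude $u' = x \wedge y$ as elements of $\varDelta^0$. Once the universal property holds, $x \wedge y$ is characterized intrinsically and is thus independent of the chart $\varSigma$, and the argument dualizes to $x \vee y$.

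With the lattice structure in hand, modularity follows from Lemma~\ref{lem:valuation}: the natural coloring $\ell$, extended by $\ell(\bar 0) := 0$ and $\ell(\bar 1) := n$, is a valuation, since in each apartment the identity $\ell(x) + \ell(y) = \ell(x \wedge y) + \ell(x \vee y)$ reduces to the Boolean identity $\sum x_i + \sum y_i = \sum \min(x,y)_i + \sum \max(x,y)_i$, and strict monotonicity under $\prec$ is immediate from $\ell$ being the coordinate sum in each apartment. Complementation holds because for any $x \in {\cal L}(\varDelta) \setminus \{\bar 0, \bar 1\}$, picking any apartment $\varSigma \ni x$ and taking the coordinate complement of $x$ in the Boolean identification produces an element whose meet with $x$ is $\bar 0$ and whose join is $\bar 1$. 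Finally, the equality $\varDelta = {\cal O}({\cal L}(\varDelta) \setminus \{\bar 0, \bar 1\})$ follows from two observations: any simplex of $\varDelta$ sits in some apartment, where (col) forces its vertices to have distinct colors and hence to form a chain in $\{0,1\}^n \setminus \{{\bf 0},{\bf 1}\}$; conversely, iterated application of (B2) embeds any finite chain in $\preceq$ into a common apartment, where it corresponds to a Boolean chain and therefore is a simplex of $\varSigma \subseteq \varDelta$. The principal difficulty throughout is the universal property step, which is exactly where (B3$^+$) and (ret) are used to coordinate two different apartment charts and force the intrinsic characterization of meet.
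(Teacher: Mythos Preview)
Your overall plan matches the paper's: use apartments as Boolean charts, verify transitivity inside one apartment via (B2), show $\ell$ is a valuation to get modularity, and read off complementation and the order-complex identification from the Boolean picture. The only substantive divergence is in how you establish that $x\wedge y$ exists.

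There is a genuine gap in your meet argument. From the retraction you obtain $\rho(w)\le x\wedge y$ in $\varSigma$, and in $\varSigma_w$ you obtain $w\le u':=\min_{\varSigma_w}(x,y)$. You then assert that ``the retraction computation with (B3$^+$)'' yields $u'=x\wedge y$ as a vertex of $\varDelta$, but neither tool delivers this. The retraction only gives $\rho_\varSigma(u')\le x\wedge y$ (and dually $\rho_{\varSigma_w}(x\wedge y)\le u'$), which shows $\ell(u')=\ell(x\wedge y)$ but not equality of the vertices. Axiom (B3$^+$) produces a color-preserving isomorphism $\varphi:\varSigma_w\to\varSigma$ fixing $x$ and $y$, and one then gets $\varphi(u')=x\wedge y$; however $\varphi$ is not asserted to fix anything beyond $x,y$, so $\varphi(u')=x\wedge y$ does not imply $u'=x\wedge y$. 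What is missing is a further application of (B2) to the simplices $\{u',x\}$ and $\{x\wedge y,y\}$ to place all four vertices in a single apartment, after which the rank comparison $\ell(u')=\ell(x\wedge y)=\ell(\min(x,y))$ forces $u'=\min(x,y)=x\wedge y$.

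The paper sidesteps this entirely with a shorter argument: rather than fixing a chart and proving a universal property, it takes two \emph{maximal} common lower bounds $z,z'$ of $x,y$, applies (B2) to the simplices $\{x,z\}$ and $\{y,z'\}$ to get one apartment containing all four, observes $z,z'\le\min(x,y)$ there, and concludes $z=z'=\min(x,y)$ by maximality. This yields existence and apartment-independence of $x\wedge y$ in one stroke, without any detour through retractions or (B3$^+$). Your salvageable argument above is, once completed, essentially this same maneuver with extra scaffolding.
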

\begin{proof}	
	We first show that $\preceq$ is a partial order.
	It suffices to show that $x \preceq y$ and $y \preceq z$ imply $x \preceq z$.
	Consider an apartment $\varSigma$ containing simplices $\{x,y\}$ and $\{y,z\}$.
	Regard $\varSigma = {\cal O}(\{0,1\}^n \setminus \{{\bf 0},{\bf 1}\})$, 
	where we can assume that $\ell(x) = \sum_i x_i$ since $\ell$ is natural.
	Then $x \leq y$ and $y \leq z$ in $\{0,1\}^n$.
	Hence $x \leq z$ holds in $\{0,1\}^n$.
	Consequently, $x$ and $z$ belong to a common simplex in $\varSigma \subseteq \varDelta$, implying $x \preceq z$.
	In particular, $\varDelta$ is the order complex of ${\cal L}(\varDelta) \setminus \{{\bar 0},{\bar 1}\}$.
	
	Next we show that ${\cal L}(\varDelta)$ is a lattice.
	Consider two vertices $x,y$.
	Suppose that $x$ and $y$ have  
	two different maximal common lower bounds $z,z' (\succ  \bar 0)$.
	Consider an apartment $\varSigma$ containing $\{x,z\}$ and $\{y,z'\}$.
	Regard $\varSigma = {\cal O}(\{0,1\}^n \setminus \{{\bf 0},{\bf 1}\})$.
	Then $z \leq \min (x, y) \geq z'$, and $\min (x, y)$ 
	is a common lower bound of $x$ and $y$.
	This contradicts the maximality of $z,z'$. 
	Thus the meet $x \wedge y$ exists, and is necessarily equal to 
	$\min (x, y)$ in this apartment. 
	Similarly the join $x \vee y$ exists, and is equal to $\max (x, y)$.
	In particular, we have
	$\ell(x) + \ell(y) = \sum_i x_i + \sum_i y_i = \sum_{i}\max (x,y)_i + \sum_i \min (x,y)_i = \ell(x \vee y) + \ell (x \wedge y)$,  
	where we let $\ell(\bar 0) := 0$ and $\ell(\bar 1) := n$.
	Thus $\ell$ is a valuation, and ${\cal L}(\varDelta)$ is a modular lattice.
	Also $\bar 1$ is the join of atoms (vertices having color $1$). Indeed, 
	$\bar 1 = {\bf 1} = \sum_{i} e_i = \bigvee_i e_i$ in any apartment.
\end{proof}

\section{Uniform modular lattices}\label{sec:uniform}
In this section, we introduce the concept of uniform modular lattices (carefully)
and establish the relation (Theorem~\ref{thm:main}) to affine buildings of type A.
The {\em ascending operator} of a lattice ${\cal L}$ is a map  $(\cdot)^+: {\cal L} \to {\cal L}$ defined by
\begin{equation}\label{eqn:ascending}
(x)^+ := \bigvee \{ y \in {\cal L} \mid \mbox{$y$ covers $x$} \} \quad (x \in {\cal L}).
\end{equation}
A modular lattice ${\cal L}$ is said to be {\em uniform} if 
the ascending operator $(\cdot)^+$ is defined (i.e.,
the right hand side of (\ref{eqn:ascending}) 
exists for all $x$) and is an automorphism on ${\cal L}$. 

\begin{Ex}\label{ex:Z^n} 
	As in $\{0,1\}^n$ (see Example~\ref{ex:01}), 
	$\ZZ^n$ becomes a lattice with respect to vector order $\leq$, 
	where $x \wedge y = \min(x,y)$ and $x \vee y = \max(x,y)$.
	 The component sum $x \mapsto \sum_{i=1}^{n}x_i$
	 is a valuation, and hence $\ZZ^n$ is a modular lattice (by Lemma~\ref{lem:valuation}).
	 The ascending operator is 
	 equal to $x \mapsto x + {\bf 1}$, which is clearly an automorphism.
	 Thus $\ZZ^n$ is a uniform modular lattice.
\end{Ex}

\begin{Ex}\label{ex:tree}
Let $T = (V,E)$ be an infinite tree with no vertex of degree one.
Regard $T$ as a bipartite graph $(V_0,V_1;E)$, 
where $V_0$ and $V_1$ denote two color classes.
Define $\ell:V \to \{0,1\}$ 
by $\ell(x) := 0$ if $x \in V_0$ and $\ell(x) := 1$ if $x \in V_1$.
Consider the directed graph on vertex set $V \times \ZZ$ 
such that an edge of head $(x,k)$ and tail $(x',k')$
is given if and only if $x$ and $x'$ are adjacent in $T$ 
and $\ell(x') + 2 k' = \ell(x) + 2 k + 1$
($\Leftrightarrow$ $k=k'$ and $\ell(x') = \ell(x)+ 1$ or $k'= k+1$ and $\ell(x) = \ell(x')+ 1$).
This graph is acyclic, and naturally induces 
a partial order on $V \times \ZZ$.
Let ${\cal L}$ denote the resulting poset.
For an infinite path $P$ in $T$,  
the subposet $V(P) \times \ZZ$ for vertex set of $V(P)$ of $P$ 
is isomorphic to $\ZZ^2$. 
The join and meet of two points $x,y$ exist in $\ZZ^2 = V(P) \times \ZZ$ 
for an infinite path $P$ containing $x,y$.
One can see that the function $(x,k) \mapsto \ell(x) + 2 k$ 
is a valuation on ${\cal L}$. Hence ${\cal L}$ is a modular lattice.
Since every vertex has at least two neighbors, 
the ascending operator coincides with $(x,k) \mapsto (x,k+1)$,   
which is clearly an automorphism on~${\cal L}$.
Thus ${\cal L}$ is a uniform modular lattice.	
\end{Ex}

\begin{Ex}[{See also \cite[Section 5.15]{Abels91}}]\label{ex:module}
Let $K$ be a field with a discrete valuation $v$, that is, 
a function $v:K \to \ZZ \cup \{\infty\}$ satisfying 
$v(xy) = v(x)+v(y)$, $v(x) + v(y) \geq \min (v(x),v(y))$, and 
$v(x) = 0$ $\Leftrightarrow$ $x=0$.
Let $R := \{ x \in K \mid v(x) \geq 0 \}$ 
be the associated valuation ring.
It is known that $R$ is a PID and 
has a unique maximal ideal $m = \{ x \in K \mid v(x) > 0\}$.
The ideal $m$ is generated by an element $t \in K$ (called a {\em uniformizer}).
Consider the $K$-vector space $K^n$, which is also regarded as an $R$-module.
Let ${\cal L}$ be the family of all 
free $R$-submodules of $K^n$ with rank $n$.
Such a module (called a {\em lattice} in the literature~\cite[Section 19]{Garrett}) is precisely  
an $R$-submodule generated by a $K$-linear basis $v_1,v_2,\ldots,v_n$ of $K^n$.
%In the literature of building, such a submodule is called a {\em lattice}; see \cite{Garrett}.
% 	
Regard ${\cal L}$ as a poset with respect to the inclusion order.	
Then ${\cal L}$ is a uniform modular lattice, 
where the ascending operator is given by ${\cal L} \ni L \mapsto t^{-1} L$. 
To see this fact, 
first note a general fact that 
the family of all submodules of any module becomes 
a modular lattice with $\wedge = \cap$ and $\vee = +$; 
indeed it is easy to see $L \cap (N+M) = L \cap N + M$ for $M \subseteq L$.
Therefore, to see that ${\cal L}$ is a modular lattice, 
it suffices to verify that $L,M \in {\cal L}$ 
implies $L \cap M, L + M \in {\cal L}$.
This is immediate from 
$L \cap M \subseteq L \subseteq L+M \subseteq t^{-k} (L \cap M) \subseteq t^{-k} L$
for large $k \in \ZZ$ and 
the fact that every $R$-submodule of a free $R$-module (with PID $R$) is free.
Define $h:{\cal L} \to \ZZ$ by
\[
h(L) := v(\det (v_1\ v_2\ \cdots\ v_n)) \quad (L \in {\cal L}), 
\]
where $L$ is generated by a basis $v_1,v_2,\ldots,v_n$ of $K^n$.
Then $h(L)$ is independent of the choice of the basis.
Observe that 
for $L,M \in {\cal L}$ with $L \subseteq M$ 
it holds $h(L)\geq h(M)$ and holds $h(L) > h(M)$ if and only if $L \neq M$.	
(In fact, $-h$ is a valuation of ${\cal L}$ in the sense of Section~\ref{subsec:modular}.)
From this (and discreteness of $v$), 
we see that ${\cal L}$ satisfies (F).
Also, if $L$ is covered by $M$, 
then $M = L + R t^{-1} v$ for some $v \in L$.
From this, we see that 
the ascending operator of ${\cal L}$ 
is given by $L \mapsto t^{-1}L$ and is obviously an automorphism.

A particular example of such a field $K$ 
is the field $F(t)$ of rational functions over a field $F$. 
The valuation $v$ is given by 
$v(p/q) := \deg p - \deg q$ with two polynomials $p,q$, 
where $\deg$ takes the minimum degree of a polynomial. 
\end{Ex}

Above examples actually provide representatives of affine buildings of type A.
In Section~\ref{subsec:U=>E}, we show that 
any uniform modular lattice ${\cal L}$ yields an affine building of type A.
In Section~\ref{subsec:E=>U}, we show the reverse construction.

\subsection{Uniform modular lattices $\Rightarrow$ affine buildings of type A}\label{subsec:U=>E}
Let ${\cal L}$ be a uniform modular lattice.
Let $(\cdot)^-:{\cal L} \to {\cal L}$ denote the inverse of 
the ascending operator $(\cdot)^+$.
\begin{Lem}
	The inverse $(\cdot)^-$ of $(\cdot)^+$ is given by
	\begin{equation}\label{eqn:x^-}
	(x)^- = \bigwedge \{ w \in {\cal L} \mid \mbox{$w$ is covered by $x$}\} \quad (x \in {\cal L}).
	\end{equation}
	In particular, the opposite $\check{\cal L}$ of ${\cal L}$ 
	is a uniform modular lattice.
\end{Lem}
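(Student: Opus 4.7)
My plan is to fix $x \in {\cal L}$ and let $y := (x)^-$, so that $(y)^+ = x$ by definition of the inverse. I will identify the right-hand side of (\ref{eqn:x^-}) with the meet of coatoms of the interval $[y, x]$, which I will first show is a complemented modular lattice. The argument has three ingredients that I would carry out in order: (a) show $[y, x]$ is complemented modular; (b) show every $w \in {\cal L}$ covered by $x$ lies above $y$, so that the set appearing in (\ref{eqn:x^-}) is exactly the set of coatoms of $[y, x]$; and (c) apply (cm3) and (cm1) to $[y, x]$ to conclude that this meet equals $y$.

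For (a), by definition $x = (y)^+$ is the join of elements covering $y$, each of which is an atom of $[y, x]$; hence the maximum of $[y, x]$ is the join of atoms, and (cm1), applicable by (F) since $[y,x]$ has finite rank, gives the claim. For (b), which I expect to be the main obstacle, I would use that $\varphi := (\cdot)^+$ is an automorphism of ${\cal L}$ and hence preserves the covering relation: if $w$ is covered by $x$, then $\varphi^{-1}(w) = (w)^-$ is covered by $\varphi^{-1}(x) = y$, so $y$ is one of the elements whose join defines $((w)^-)^+$, giving $y \preceq ((w)^-)^+ = w$. Conversely each coatom of $[y, x]$ is trivially covered by $x$ in ${\cal L}$, completing the identification. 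For (c), by (cm3) the opposite of $[y,x]$ is again complemented modular, and (cm1) applied to this opposite says that its maximum (namely $y$) is the join in the opposite of its atoms; but atoms of the opposite are coatoms of $[y,x]$ and the join in the opposite is the meet in $[y,x]$, which coincides with the meet in ${\cal L}$ since $[y,x]$ is an interval. Combining (b) and (c) yields (\ref{eqn:x^-}).

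For the ``in particular'' assertion, modularity is a self-dual property of lattices, so $\check{\cal L}$ is modular. The ascending operator of $\check{\cal L}$ at $x$ is, by the definitions of join and covering in the opposite, precisely the right-hand side of (\ref{eqn:x^-}), hence equals $(x)^-$ by the formula just proved; in particular the required join in $\check{\cal L}$ exists. Being the inverse of the automorphism $(\cdot)^+$ of ${\cal L}$, the map $(\cdot)^-$ is itself an automorphism of ${\cal L}$ and therefore of $\check{\cal L}$, so $\check{\cal L}$ is uniform modular.
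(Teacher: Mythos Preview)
Your proposal is correct and follows essentially the same route as the paper: both arguments show the relevant interval is complemented modular via (cm1), use the automorphism property of $(\cdot)^+$ to prove that every element covered by the top lies in the interval, and then invoke (cm3) plus (cm1) on the opposite to identify the bottom as the meet of coatoms. The only cosmetic difference is that the paper works with the interval $[x,(x)^+]$ and verifies the formula at $(x)^+$, while you rename and work with $[(x)^-,x]$; you also spell out the ``in particular'' clause, which the paper leaves implicit.
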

\begin{proof}
	By definition, $(x)^+$ is the join of all atoms of $[x, (x)^+]$.
	Hence $[x, (x)^+]$ is a complemented modular lattice (by (cm1)). 
	We show that if $y \in {\cal L}$ is covered by $(x)^+$, 
	then $y$ belongs to $[x, (x)^+]$, i.e., $x \preceq y$.
	Indeed, since $(\cdot)^+$ is an automorphism, 
	there is $y' \in {\cal L}$ such that $(y')^+ = y$.
	Also $x$ covers $y'$,  
	which implies $x \preceq (y')^+$ by the definition of $(\cdot)^+$.
	The opposite of $[x, (x)^+]$ is also complemented modular (by (cm3)).
	Therefore $x$ is the meet of all elements ({\em coatoms}) 
	covered by $(x)^+$ in $[x, (x)^+]$. 
	By the above argument, they are exactly elements covered by $(x)^+$ in ${\cal L}$. 
    This means that the right hand side of (\ref{eqn:x^-}) exists, 
    and equal to $(x)^-$.  
\end{proof}
For an integer $k \in \ZZ$, let $(\cdot)^{+k}$
be defined as $((\cdot)^{+(k-1)})^+$ if $k > 0$, $((\cdot)^{+(k+1)})^-$ if $k < 0$, and 
the identity map if $k=0$. For $k > 0$, we also denote $(\cdot)^{+(-k)}$ by $(\cdot)^{-k}$.
\begin{Lem}\label{lem:u-rank}
	For $x,y \in {\cal L}$, 
	the intervals $[x, (x)^+]$ and $[y, (y)^+]$ are complemented modular lattices of the same rank.
\end{Lem}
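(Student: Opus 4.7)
The plan is to establish complementedness and finite rank of $[x,(x)^+]$ first (which is essentially immediate), and then prove the rank equality by reducing the general case to the comparable case $x \preceq y$ via the meet $x \wedge y$.

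For the complemented modular part, I would observe that by the very definition of $(x)^+$ as the join of all elements covering $x$, these covering elements are precisely the atoms of $[x,(x)^+]$, and their join is the top. Hence by (cm1) the interval $[x,(x)^+]$ is a complemented modular lattice. Its rank is finite by assumption (F).

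The main step is the rank equality. The crux is a valuation-plus-automorphism argument in the case $x \preceq y$. Since $(\cdot)^+$ is an automorphism it is order-preserving, so $(x)^+ \preceq (y)^+$, and its restriction gives a bijection $[x,y] \to [(x)^+,(y)^+]$, yielding $r[x,y] = r[(x)^+,(y)^+]$. On the other hand, the rank function of a modular lattice is a valuation, so it is additive along any maximal chain. Computing $r[x,(y)^+]$ along the two chains $x \preceq (x)^+ \preceq (y)^+$ and $x \preceq y \preceq (y)^+$ gives
\[
r[x,(x)^+] + r[(x)^+,(y)^+] \;=\; r[x,(y)^+] \;=\; r[x,y] + r[y,(y)^+].
\]
Substituting $r[x,y] = r[(x)^+,(y)^+]$ yields $r[x,(x)^+] = r[y,(y)^+]$.

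For arbitrary $x, y \in \mathcal{L}$, apply the comparable case twice, once to $x \wedge y \preceq x$ and once to $x \wedge y \preceq y$, to conclude
\[
r[x,(x)^+] \;=\; r[x \wedge y,\,(x \wedge y)^+] \;=\; r[y,(y)^+].
\]
The main ``obstacle'' is really just noticing that the automorphism $(\cdot)^+$ transports the interval $[x,y]$ onto $[(x)^+,(y)^+]$ preserving rank; once this is spotted, the rest is a short arithmetic identity plus the standard meet-reduction trick for comparing two arbitrary elements through their meet.
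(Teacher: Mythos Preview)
Your proof is correct and follows essentially the same approach as the paper: both establish complementedness via (cm1) and then compare ranks using (JD) together with the fact that the automorphism $(\cdot)^+$ preserves interval structure. The paper reduces one step further to the case where $y$ covers $x$ (so that $(y)^+$ covers $(x)^+$, giving $1 + r[y,(y)^+] = r[x,(y)^+] = r[x,(x)^+] + 1$), whereas you handle all comparable pairs at once via $r[x,y] = r[(x)^+,(y)^+]$ and make the meet-reduction for incomparable $x,y$ explicit---a minor cosmetic difference only.
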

\begin{proof}
	We show that $[x, (x)^+]$ and $[y, (y)^+]$ have the same rank.
	It suffices to consider the case where $y$ covers $x$ (by (F)).
	Since $(\cdot)^+$ is an automorphism, 
	$(y)^+$ covers $(x)^+$.
	Therefore we have $1 + r[y,(y)^+] = r[x, (y)^+] = r[x,(x)^+] + 1$ (by (JD)), which implies $r[x,(x)^+] = r[y, (y)^+]$.
\end{proof}
The {\em uniform-rank} of ${\cal L}$ is defined as the rank $r[x, (x)^+]$
of interval $[x,(x)^+]$ for $x \in {\cal L}$.
A chain $x^0 \prec x^1 \prec \cdots \prec x^m$ is said to be {\em short}
if $x^m \preceq (x^0)^+$.
Define an equivalence relation $\sim$ on 
${\cal L}$ by $x \sim y$ if $(x)^{+k} = y$ for some $k \in \ZZ$.
Let ${\cal C}({\cal L})$ 
be the simplicial complex on ${\cal L} /{\sim}$  
consisting of all short chains in ${\cal L}$ modulo $\sim$.
The goal of this section is to show the following.
\begin{Thm}\label{thm:main1}
	Let ${\cal L}$ be a uniform modular lattice of uniform-rank $n \geq 2$.
	Then the simplicial complex ${\cal C}({\cal L})$ is an affine building of type A with dimension $n-1$.
\end{Thm}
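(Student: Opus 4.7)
The plan is to mimic the structure of the proof of Theorem~\ref{thm:spherical-1} in the affine setting. I would first define a \emph{$\ZZ^n$-skeleton} of ${\cal L}$ to be a sublattice ${\cal F}\subseteq{\cal L}$ admitting a lattice isomorphism $\psi:\ZZ^n\to{\cal F}$ under which the ascending operator of ${\cal L}$ restricts to the translation $z\mapsto z+{\bf 1}$ on $\ZZ^n$. Since the equivalence $\sim$ restricted to ${\cal F}$ coincides under $\psi$ with the equivalence modulo $\ZZ{\bf 1}$, the short chains of ${\cal F}$ modulo $\sim$ form, by construction, a subcomplex of ${\cal C}({\cal L})$ isomorphic to the affine Coxeter complex of type A on $\ZZ^n/\ZZ{\bf 1}$. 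Declaring these subcomplexes to be the apartments verifies (B1).

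For (B2) I would prove the affine analogue of Lemma~\ref{lem:twochains}: any two short chains $C,D$ of ${\cal L}$ lie, modulo $\sim$, in a common $\ZZ^n$-skeleton. After replacing $D$ by an equivalent translate $D^{+k}$, I arrange that $p:=x_0\wedge(y_0)^{+k}$ — where $x_0,y_0$ are the bottoms of $C,D$ — is a common lower bound of $C$ and $D^{+k}$; then the elementary estimate $q\preceq(p)^{+r[p,q]}$ for $p\preceq q$, obtained by repeatedly applying the order-preserving automorphism $(\cdot)^+$ along a maximal chain from $p$ to $q$, confines $C\cup D^{+k}$ to a finite-height interval $[p,(p)^{+N}]$. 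Within this interval I build the skeleton floor by floor: on each complemented modular floor $[(p)^{+j},(p)^{+(j+1)}]$ I apply Lemma~\ref{lem:twochains} to the projections (via meets/joins with the floor endpoints) of $C$ and $D^{+k}$ to obtain a $\{0,1\}^n$-skeleton, and invoke Lemma~\ref{lem:complement} to make the successive choices on neighboring floors agree under $(\cdot)^+$. The compatible $\{0,1\}^n$-skeletons are then extended, via $(\cdot)^{\pm}$, to a sublattice of ${\cal L}$ isomorphic to $\ZZ^n$.

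For (B3) I would follow the spherical template. Define the relative position of $y\in{\cal F}$ with respect to a maximal short chain $C=(x_0\prec\cdots\prec x_n=(x_0)^+)\subseteq{\cal F}$ as the class in $\ZZ^n/\ZZ{\bf 1}$ read off from the indices $i$ where $x_i\wedge(y)^{+k}\succ x_{i-1}\wedge(y)^{+k}$, for any shift $k$ placing $(y)^{+k}$ inside $[x_0,(x_0)^+]$; arguing as in Lemma~\ref{lem:indep}, this class is independent of the ambient $\ZZ^n$-skeleton. Given two $\ZZ^n$-skeletons ${\cal F},{\cal G}$ sharing simplices $A,B$, matching basis elements by their relative positions with respect to $A$ produces a lattice isomorphism $\varphi:{\cal F}\to{\cal G}$ that is the identity on $A$; the independence of relative position then forces $\varphi$ to fix every vertex of ${\cal F}\cap{\cal G}$, in particular those of $B$.

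The hard part is the inductive construction for (B2). The spherical induction runs on the finite rank of the whole lattice, but here ${\cal L}$ has infinite rank and a $\ZZ^n$-skeleton spans infinitely many floors, so the argument must be doubly inductive (outer on the uniform-rank $n$, inner on the height $N$); moreover, the $\{0,1\}^n$-skeletons chosen by Lemma~\ref{lem:complement} on successive floors must be glued by the automorphism $(\cdot)^+$ into a single sublattice isomorphic to $\ZZ^n$. The subtlety is that the sublattice generated naively by a basis of one floor together with its $(\cdot)^{+k}$-translates is typically only a ``diagonal band'' and not isomorphic to $\ZZ^n$; ensuring that the floor-by-floor bases can be upgraded to a bona fide $\ZZ^n$-shaped sublattice, closed under $\wedge,\vee$ and containing $C$ and $D^{+k}$, is where the principal technical work of the affine case lies.
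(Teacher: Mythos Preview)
Your template for (B1) and (B3) matches the paper's, but the core difficulty---constructing a $\ZZ^n$-skeleton through two given short chains---is not resolved by your floor-by-floor plan, and the paper does \emph{not} proceed that way. Choosing a $\{0,1\}^n$-skeleton in each floor $[(p)^{+j},(p)^{+(j+1)}]$ and asking Lemma~\ref{lem:complement} to make successive choices ``agree under $(\cdot)^+$'' gives you, at best, the sublattice generated by a basis and its $(\cdot)^+$-translates, which as you yourself note is only a diagonal band; nothing in Lemma~\ref{lem:complement} tells you how to pick bases on neighboring floors whose union generates a full copy of $\ZZ^n$. The paper's device is orthogonal to the floor picture: it introduces \emph{segments} and \emph{rays}---chains $(a^l)$ in which each $a^{l+1}$ covers $a^l$ but $a^{l+1}\notin[a^{l-1},(a^{l-1})^+]$, so the chain crosses floors ``transversally'' rather than following $(\cdot)^+$---and shows (Lemma~\ref{lem:ray}) that $n$ independent $x$-rays generate a sublattice isomorphic to $\ZZ_+^n$, which then extends to a $\ZZ^n$-skeleton by taking $(\cdot)^{-k}$-translates (Lemma~\ref{lem:Z^n}). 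The proof of Lemma~\ref{lem:twochains_e} is an induction that descends along the sequence $x_j:=(x_{j-1})^+\wedge y$ (not along the floors $[(p)^{+j},(p)^{+(j+1)}]$), extending a partial $k$-frame at $(x_j)^+$ to a partial $k'$-frame at $(x_{j-1})^+$ while keeping track of both chains. This ``ray'' viewpoint is the missing idea.

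There is also a slip in your (B3) sketch: for a general $y\in{\cal F}$ there is \emph{no} shift $k$ with $(y)^{+k}\in[x_0,(x_0)^+]$ (take $n=2$, $x_0={\bf 0}$, $y=(5,0)$ in $\ZZ^2$), so your proposed relative position is undefined. The paper instead defines $y_C\in\ZZ^n$ via the iterative sequence $z^j:=(z^{j-1})^+\wedge y$ started at $z^0=x_0$ (Lemma~\ref{lem:indep2}), reading off one ``digit'' per step from the covering relations $x^i\wedge z^{j}\succ x^{i-1}\wedge z^{j}$; this works for arbitrary $y$ and is what makes the isomorphism in (B3) fix ${\cal F}\cap{\cal G}$ pointwise.
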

Let us return to the above Examples~\ref{ex:Z^n}, \ref{ex:tree}, and \ref{ex:module}.
For ${\cal L} = \ZZ^n$ (Example~\ref{ex:Z^n}), 
the simplicial complex ${\cal C}({\cal L})$
is nothing but the affine Coxeter complex of type A, 
since any maximal short chain is the form of (\ref{eqn:Coxeter_e}) 
and the ascending operator is $x \mapsto x+ {\bf 1}$.
In the case of Example~\ref{ex:tree}, 
${\cal C}({\cal L})$ is regarded as the original tree $T$.
It is well-known that 
an infinite tree without vertices of degree one
is a 1-dimensional affine building (of type A).
In Example~\ref{ex:module},
the complex ${\cal C}({\cal L})$ is 
nothing but the affine building for SL($K^n$).
This is a canonical example of an affine building of type A; 
see~\cite[Section 19]{Garrett}. 
Apartments are given by ${\cal C}({\cal L}(Q))$
for the sublattice ${\cal L}(Q)$ of ${\cal L}$ consisting of 
modules $R t^{\alpha_1} v_1 + R t^{\alpha_2} v_2+ \cdots R t^{\alpha_n}v_n$ 
for nonsingular $Q  = (v_1\ v_2\ \cdots\ v_n )\in K^{n \times n}$ and $\alpha \in \ZZ^n$.
Observe that ${\cal L}(Q)$ is isomorphic to the opposite of $\ZZ^n$ 
with $(x)^+ = x - {\bf 1}$ for $x \in {\cal L}(Q) = \ZZ^n$, 
and ${\cal C}({\cal L}(Q))$ is isomorphic to the affine Coxeter complex of type A.
The definition of uniform modular lattice 
is inspired by this example.

In the following, 
we suppose that the uniform-rank of ${\cal L}$ is equal to $n$.
Motivated by the above ${\cal L}(Q)$,  
define a {\em $\ZZ^n$-skeleton} of ${\cal L}$ by 
a sublattice ${\cal F}$ that is isomorphic to $\ZZ^n$ and satisfies
$(x)^+ = x + {\bf 1}$ for all $x \in {\cal F}$, 
where $x \mapsto x + {\bf 1}$ is the ascending operator in ${\cal F} = \ZZ^n$.
The proof of Theorem~\ref{thm:main1} goes along 
precisely the same line of the proof of Theorem~\ref{thm:spherical-1}.
Thus we show the following two lemmas. 
The first one corresponds to Lemma~\ref{lem:twochains}, and will be proved later.
\begin{Lem}\label{lem:twochains_e}
	For two short chains $C,D$ in ${\cal L}$, 
	there exists a $\ZZ^n$-skeleton of ${\cal L}$ containing $C,D$.
\end{Lem}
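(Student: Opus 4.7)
The plan is to adapt the inductive structure of Lemma~\ref{lem:twochains} to the affine setting, exploiting the automorphism property of the ascending operator $(\cdot)^+$ to extend a $\{0,1\}^n$-basis of a single complemented modular layer $[z, (z)^+]$ into an ambient $\ZZ^n$-skeleton that covers all layers uniformly.

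First I would reduce to the case where $C$ and $D$ are maximal short chains of length $n$: $C = (p = x^0 \prec x^1 \prec \cdots \prec x^n = (p)^+)$ and $D = (q = y^0 \prec y^1 \prec \cdots \prec y^n = (q)^+)$, each a maximal chain in its respective rank-$n$ complemented modular lattice (using Lemma~\ref{lem:u-rank}).

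For the base case $p = q$, both $C$ and $D$ lie in the single complemented modular lattice $[p, (p)^+]$, so Lemma~\ref{lem:twochains} yields a basis $a_1, a_2, \ldots, a_n$ of $[p, (p)^+]$ whose generated $\{0,1\}^n$-skeleton contains $C \cup D$. I would then define the candidate $\ZZ^n$-skeleton ${\cal F}$ by closing the sublattice $\langle a_1, \ldots, a_n \rangle$ under $(\cdot)^+$ and $(\cdot)^-$ (as well as $\wedge, \vee$), and verify ${\cal F} \cong \ZZ^n$ with the ascending operator corresponding to $v \mapsto v + {\bf 1}$; the key point is that $(\cdot)^{\pm k}$ commutes with $\wedge$ and $\vee$ since it is an automorphism, so the relations in the sublattice faithfully match those in $\ZZ^n$.

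For the general case $p \neq q$, I would induct on the distance $r[p \wedge q, p] + r[p \wedge q, q]$. Using Lemma~\ref{lem:complement} inside the complemented modular lattice $[p \wedge q, p \vee q]$, I would replace $C$ by a maximal short chain $C'$ whose minimum is strictly closer to $q$, arranged so that any $\ZZ^n$-skeleton containing $C'$ (together with $D$) automatically contains $C$. Applying the inductive hypothesis to the pair $C', D$ then yields the desired $\ZZ^n$-skeleton.

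The main obstacle is the inductive step. In contrast to the spherical case, where induction on the rank of the ambient lattice reduces the problem to a complemented modular sub-interval, here the chains live in layers $[p, (p)^+]$ and $[q, (q)^+]$ that may not share any common complemented modular interval. The construction of the shifted chain $C'$ must weave together the modular structure (via Lemmas~\ref{lem:isomorphism} and~\ref{lem:complement}) with the translation structure from the automorphism $(\cdot)^+$, and careful bookkeeping is needed to ensure that the resulting $\ZZ^n$-skeleton obtained from the inductive conclusion indeed contains the original $C$.
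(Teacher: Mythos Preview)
Your base case already contains a genuine gap. Take ${\cal L}=\ZZ^n$ itself with $p={\bf 0}$ and $a_i=e_i$. The sublattice $\langle a_1,\ldots,a_n\rangle$ is $\{0,1\}^n$; applying $(\cdot)^{\pm k}$ gives $\bigcup_{k\in\ZZ}(\{0,1\}^n+k{\bf 1})$, which is the set $S=\{z\in\ZZ^n:\max_i z_i-\min_i z_i\le 1\}$. One checks directly that $S$ is already closed under $\min$ and $\max$, so closing under $\wedge,\vee,(\cdot)^{\pm}$ produces $S$, not $\ZZ^n$. The point $2e_1$, for instance, is never reached. Thus your candidate ${\cal F}$ is not a $\ZZ^n$-skeleton, and the verification you sketch (``the relations in the sublattice faithfully match those in $\ZZ^n$'') cannot go through. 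To get a genuine $\ZZ^n$-skeleton from a basis of a single layer $[p,(p)^+]$ you must make \emph{choices}: for each $i$ you need an infinite chain $p\prec a_i^1\prec a_i^2\prec\cdots$ that moves in ``direction $i$'' while escaping successive cubes $[a_i^{l-1},(a_i^{l-1})^+]$. This is exactly the notion of a \emph{ray}, and showing that $n$ independent rays generate a sublattice isomorphic to $\ZZ_+^n$ (Lemma~\ref{lem:ray} in the paper) is nontrivial work that your proposal skips entirely.

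Your inductive step is too vague to rescue the argument. You want a chain $C'$ with base point strictly closer to $q$ such that \emph{every} $\ZZ^n$-skeleton containing $C'\cup D$ automatically contains $C$; but a $\ZZ^n$-skeleton is a rather rigid object, and nothing in Lemmas~\ref{lem:isomorphism} or~\ref{lem:complement} gives you control over which elements outside $[p\wedge q,p\vee q]$ a skeleton must contain. The paper does not attempt this kind of induction. Instead it builds the skeleton constructively: it defines a canonical path $x=x_0\prec x_1\prec\cdots\prec x_m=y$ with $x_j=(x_{j-1})^+\wedge y$, starts from a $\{0,1\}^k$-basis of the top layer $[(x_{m-1})^+,(y)^+]$ (via Lemma~\ref{lem:twochains}), and then, step by step, extends a \emph{partial $k$-frame} at $(x_j)^+$ to a larger one at $(x_{j-1})^+$, carefully tracking how the chains $C_j,D_j$ sit inside it. The machinery of segments, rays, and partial frames (Lemmas~\ref{lem:f_i^k}--\ref{lem:Z^n}) is developed precisely to make this extension work, and is the real content of the proof.
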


Let $C = (x = x^0 \prec x^1 \prec \cdots \prec x^n = (x)^+)$ be a maximal short chain.
Let us define 
the {\em relative position} $y_C \in \ZZ^n$ of an element $y \in {\cal L}$
with respect to $C$.
Choose a $\ZZ^n$-skeleton ${\cal F}$ containing $C$ and $y$ via Lemma~\ref{lem:twochains_e}.  
Identify ${\cal F}$ with $\ZZ^n$ 
so that $x^i - x^{i-1} = e_i$ for $i=1,2,\ldots,n$.
Define the relative position $y_C \in \ZZ^n$ 
as the integer vector $y - x$ in this coordinate.

 \begin{Lem}\label{lem:indep2}
 	The relative position $y_C$ of $y \in {\cal L}$ is independent of 
 	the choice of a $\ZZ^n$-skeleton containing~$C,y$.
 \end{Lem}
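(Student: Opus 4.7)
The plan is to produce, for each coordinate $i$, an intrinsic formula for $(y_C)_i$ that uses only the lattice structure of ${\cal L}$ and the ascending operator applied to the elements $x^{i-1}$, $x^i$, $y$. Since such a formula does not refer to any particular skeleton, it must return the same integer in any $\ZZ^n$-skeleton containing $C$ and $y$, which is the content of the lemma.

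To guess the formula, I would first compute inside a fixed skeleton ${\cal F}$, identified with $\ZZ^n$ so that $x = {\bf 0}$, $x^i = e_1 + \cdots + e_i$, and $y$ corresponds to the vector $v = y_C \in \ZZ^n$. Since $\wedge$ in $\ZZ^n$ is coordinatewise $\min$ and $(y)^{-k} = v - k{\bf 1}$, a direct check shows
\begin{equation*}
(y)^{-k} \wedge x^i \ \preceq\ x^{i-1} \quad \Longleftrightarrow \quad v_i \leq k,
\end{equation*}
the only binding coordinate being $j = i$. This proposes the intrinsic formula
\begin{equation*}
v_i \;=\; \min \bigl\{ k \in \ZZ \ \mid \ (y)^{-k} \wedge x^i \ \preceq\ x^{i-1}\bigr\},
\end{equation*}
which is well defined: the set is nonempty since $(y)^{-k} \preceq x^{i-1}$ for all sufficiently large $k$, and it is bounded below since $(y)^{-k} \wedge x^i = x^i \not\preceq x^{i-1}$ for all sufficiently small $k$.

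Next I would upgrade the equivalence above from a skeleton-level check to an intrinsic statement in ${\cal L}$. Setting $a := (y)^{-k} \wedge x^{i-1}$ and $b := (y)^{-k} \wedge x^i$, Lemma~\ref{lem:isomorphism}(1) applied with $p := x^{i-1}$ and $q := b$ (so that $p \wedge q = a$ using $x^{i-1} \preceq x^i$) gives an isomorphism $[a,b] \cong [x^{i-1}, \, x^{i-1} \vee b]$. The right-hand interval sits inside $[x^{i-1}, x^i]$, which has rank $1$ because $C$ is short, so $r[a,b] \in \{0,1\}$ and $r[a,b] = 0$ precisely when $b \preceq x^{i-1}$. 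Thus the skeleton computation is recast as a purely intrinsic condition on $x^{i-1}, x^i, y$, and applying the identical reasoning in any other $\ZZ^n$-skeleton ${\cal F}'$ containing $C,y$ yields the same integer $v_i$.

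The hard part will be spotting the right intrinsic quantity. A single comparison at $k = 0$ would only recover the sign of $v_i$; it is crucial that the ascending operator provides a $\ZZ$-indexed family of shifts $(y)^{-k}$, so that the precise integer $v_i$ is detected by the threshold $k$ at which the lattice relation ``$(y)^{-k} \wedge x^i \preceq x^{i-1}$'' first becomes true.
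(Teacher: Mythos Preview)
Your proof is correct. Both your approach and the paper's derive, for each coordinate $i$, an intrinsic lattice-theoretic description of $(y_C)_i$ using only $\wedge$, $\preceq$, and the ascending operator. The paper first reduces to the case $x \preceq y$ and then builds an iterative sequence $z^0 = x$, $z^j = (z^{j-1})^+ \wedge y$ climbing from $x$ to $y$; the contribution of step $j$ to coordinate $i$ is detected by the condition $(x^i)^{+(j-1)} \wedge y \succ (x^{i-1})^{+(j-1)} \wedge y$. You instead shift $y$ down by $(\cdot)^{-k}$ and read off $(y_C)_i$ as the threshold $k$ at which $(y)^{-k} \wedge x^i$ first drops into $[{\bar 0},x^{i-1}]$. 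Your closed-form is slightly more direct---no preliminary reduction to $x \preceq y$, no summation over steps---while the paper's construction has the side benefit of exhibiting a canonical chain from $x$ to $y$ lying in \emph{every} skeleton, an idea recycled later in Lemma~\ref{lem:every}. The two arguments are essentially dual: you translate $y$ down, the paper translates $C$ up.
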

 \begin{proof}
 	We may assume that $x \preceq y$, since $((x)^-)_C  = x_C - {\bf 1}$ 
 	in any $\ZZ^n$-skeleton containing $x$.
 	Define sequence $x= z^0, z^1,\ldots, z^k = y$ in ${\cal F} = \ZZ^n$
 	with $k:= \max_i y_i- x_i$ by
 	\begin{equation}\label{eqn:z^j}
 	z^{j} := (z^{j-1} + {\bf 1}) \wedge y = z^{j-1} + 
 	\sum \{e_i \mid i: z^{j-1}_i < y_i \}.
 	\end{equation}
 	Since $z^j$ is obtained from $z^{j-1}, y$ by taking the ascending operator and $\wedge$,  
 	any $\ZZ^n$-skeleton containing $z^{j-1},y$ also contains $z^j$.
 	Consequently every $\ZZ^n$-skeleton containing $x,y$ 
 	contains the whole sequence $z^j$. 
 	Now $y_C = \sum_{j=1}^k  \sum \{ e_i \mid i: z^{j-1}_i < y_i \}$.
 	An index $i$ with $z^{j-1}_i < y_i$ is precisely an index 
 	with $(x^i + (j-1){\bf 1}) \wedge y \succ (x^{i-1} + (j-1){\bf 1}) \wedge y$.
 	This means that the indices of the sum in (\ref{eqn:z^j}) 
 	are independent of 
 	the choice of a $\ZZ^n$-skeleton.
    Thus the relative position $y_C$ is independent of the choice of a $\ZZ^n$-skeleton. 
 \end{proof}

Assuming the two lemmas, we complete 
the proof of Theorem~\ref{thm:main1}.
\begin{proof}[Proof of Theorem~\ref{thm:main1}]
	Short chains in a $\ZZ^n$-skeleton are short chains in ${\cal L}$ (by $(x)^+ = x+ {\bf 1}$). 
	Therefore $\ZZ^n$-skeletons induce subcomplexes in ${\cal C}({\cal L})$.
	We show that these subcomplexes satisfy the axiom of apartments.  
    Observe that they are isomorphic to the affine Coxeter complex of type A, which implies~(B1).
	Consider two simplices $A,B$ in ${\cal C}({\cal L})$, 
	which come from two short chains $C,D$
	in ${\cal L}$.
	By Lemma~\ref{lem:twochains_e} there is a $\ZZ^n$-skeleton containing $C,D$. 
	This implies (B2).
	Suppose that two $\ZZ^n$-skeletons ${\cal F},{\cal G}$ contain two short chains $C,D$. 
    Both ${\cal F}$ and ${\cal G}$ are regarded as $\ZZ^n$.
    To distinguish them, the unit vectors 
    of ${\cal F}$ and of ${\cal G}$ are denoted by 
    $e_1,e_2,\ldots,e_n$ and $e'_1,e'_2,\ldots,e'_n$, respectively.
    By appropriate renumbering and translation, 
    we can assume that 
    $C$ is equal to $({\bf 0} \leq e_1 \leq e_1+e_2 \leq \cdots \leq e_1+e_2 + \cdots + e_n = {\bf 1})$ in ${\cal F}$ and $({\bf 0} \leq e'_1 \leq e'_1+e'_2 \leq \cdots \leq e'_1+e'_2 + \cdots + e'_n = {\bf 1})$ in ${\cal G}$.
    Consider an isomorphism $\varphi: {\cal F} \to {\cal G}$ 
    defined by $\sum_{i=1}^n z_i e_i \mapsto \sum_{i=1}^n z_i e'_i$.
    The map $\varphi$ obviously induces a bijection between short chains.
    Moreover, by Lemma~\ref{lem:indep2}, $\varphi$ is the identity on
    the set ${\cal F} \cap {\cal G}$ of all common points.
    In particular, $\varphi$ is the identity on $C \cup D$.
    Hence $\varphi$ induces an isomorphism with (B3). 
\end{proof}

The remainder of this section is devoted to proving Lemma~\ref{lem:twochains_e}.
In the following, the rank $r[x,y]$ of interval $[x,y]$ is denoted by $r_x(y)$.
The function $y \mapsto r_x(y)$ is the rank function of 
the sublattice consisting of elements $y$ with $y \succeq x$.

We start with studying representations of $\ZZ^n$-skeletons.
A {\em segment} is a chain $a^0 \prec a^1 \prec \cdots \prec a^s$
such that $a^l$ covers $a^{l-1}$ for $l=1,2,\ldots,s$, 
and $a^{l+1} \not \in [a^{l-1}, (a^{l-1})^+] (\ni a^l)$ for $l=1,2,\ldots,s-1$.
A {\em ray} is an infinite chain $a^0 \prec a^1 \prec \cdots \prec a^l \prec \cdots$
satisfying this property for all $l = 1,2,\ldots.$
If $x = a^0$, 
a segment and a ray are called an {\em $x$-segment} and {\em $x$-ray}, respectively. 
\begin{Lem}\label{lem:segment}
	A segment in ${\cal L}$ is a segment in the opposite $\check{\cal L}$.
\end{Lem}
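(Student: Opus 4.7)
The plan is to translate the claim through the order reversal and reduce it to a local statement about three consecutive covering elements. Passage to the opposite preserves the covering relation, so the chain $a^0 \prec a^1 \prec \cdots \prec a^s$ still consists of consecutive covers in $\check{\cal L}$. Furthermore, the ascending operator of $\check{\cal L}$ is the descending operator $(\cdot)^-$ of ${\cal L}$, and intervals in $\check{\cal L}$ are the same sets of elements as intervals in ${\cal L}$ with endpoints swapped. Unraveling these identifications, the remaining segment condition in $\check{\cal L}$ reduces, for each $l = 1, \ldots, s-1$, to the requirement that $a^{l+1} \not\preceq (a^{l-1})^+$ if and only if $(a^{l+1})^- \not\preceq a^{l-1}$.

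Writing $x := a^{l-1}$, $y := a^l$, $z := a^{l+1}$, so that $y$ covers $x$ and $z$ covers $y$, it therefore suffices to prove the local claim: $z \preceq x^+$ if and only if $z^- \preceq x$. My strategy is to show that both conditions are equivalent to the rank-two interval $[x, z]$ being a complemented modular lattice. If $z \preceq x^+$, then $[x, z]$ is a sub-interval of $[x, x^+]$, which is complemented modular, so $[x, z]$ is complemented modular by (cm2); symmetrically for $z^- \preceq x$, using that $[z^-, z]$ is complemented modular as observed in the preceding lemma. For the converse, if $[x, z]$ is complemented modular, then by (cm1) $z$ is the join of the atoms of $[x, z]$; each such atom covers $x$ and therefore lies below $x^+$ by definition of $(\cdot)^+$, whence $z \preceq x^+$. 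The dual version, available via (cm3), expresses $x$ as the meet of the coatoms of $[x, z]$; each coatom is covered by $z$ and therefore lies above $z^-$ by the characterization of $(\cdot)^-$ given in the preceding lemma, whence $z^- \preceq x$.

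The main obstacle is purely bookkeeping: carefully tracing the segment axiom through the order reversal to isolate the correct local claim. Once the local claim is formulated, its verification uses nothing more than the preceding lemma together with the standard facts (cm1)--(cm3) about complemented modular lattices. The conceptual content of the argument is that the condition $a^{l+1} \notin [a^{l-1}, (a^{l-1})^+]$ is really a symmetric assertion about the rank-two interval $[a^{l-1}, a^{l+1}]$, namely that this interval fails to be a complemented modular ``diamond''; symmetry of the characterization then immediately implies invariance under passage to the opposite.
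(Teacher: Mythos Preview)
Your proof is correct, and it takes a genuinely different route from the paper's. The paper argues by direct computation: from $a^{l+1} \notin [a^{l-1}, (a^{l-1})^+]$ it first deduces $a^{l+1} \wedge (a^{l-1})^+ = a^l$ (since $a^l \preceq a^{l+1} \wedge (a^{l-1})^+ \preceq a^{l+1}$ and $a^{l+1}$ covers $a^l$), then simply applies the lattice automorphism $(\cdot)^-$ to this equation to obtain $(a^{l+1})^- \wedge a^{l-1} = (a^l)^- \prec (a^{l+1})^-$, which immediately gives $(a^{l+1})^- \not\preceq a^{l-1}$. Your argument instead isolates an intrinsic, self-dual characterization: for $x \preceq z$, one has $z \preceq x^+$ if and only if the interval $[x,z]$ is complemented modular, if and only if $z^- \preceq x$. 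This is more conceptual---it explains \emph{why} the segment condition is invariant under order reversal rather than merely verifying it---and the equivalence you establish actually holds for arbitrary $x \preceq z$, not only the rank-two case needed here. The paper's approach is slightly shorter and uses nothing beyond the fact that $(\cdot)^-$ preserves meets; yours leans on (cm1)--(cm3) together with the preceding lemma, but yields a cleaner structural picture of what the segment condition really says about $[a^{l-1}, a^{l+1}]$.
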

\begin{proof}
	$a^{l+1} \not \in [a^{l-1}, (a^{l-1})^+]$ implies 
	$a^{l+1} \wedge (a^{l-1})^+ = a^l$.
	Then $(a^{l+1})^- \wedge a^{l-1} = (a^l)^- \prec (a^{l+1})^-$.
	This implies that $a^{l-1} \not \succeq (a^{l+1})^-$, 
	and $a^{l-1} \not \in [(a^{l+1})^-,a^{l+1}]$.
	Hence $(a^l)$ is a segment in $\check{\cal L}$.
\end{proof}
\begin{Lem}
	Any $x$-segment can be extended to an $x$-ray.
\end{Lem}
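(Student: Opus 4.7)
The plan is to extend the given $x$-segment one step at a time: given an $x$-segment $x = a^0 \prec a^1 \prec \cdots \prec a^l$, I will exhibit an element $a^{l+1}$ covering $a^l$ with the property that $a^0 \prec a^1 \prec \cdots \prec a^l \prec a^{l+1}$ is still a segment. Iterating this produces the desired $x$-ray. For the base case $l=0$ (and similarly $l=1$) the segment condition imposes no constraint beyond covering, so we just choose any atom of $[a^0,(a^0)^+]$, which exists because the uniform-rank is positive.

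For the inductive step with $l \geq 1$, I need $a^{l+1}$ covering $a^l$ with $a^{l+1} \notin [a^{l-1}, (a^{l-1})^+]$. Since any $a^{l+1}$ covering $a^l$ satisfies $a^{l+1} \succeq a^l \succ a^{l-1}$, the condition reduces to $a^{l+1} \not\preceq (a^{l-1})^+$. Suppose for contradiction that every element covering $a^l$ lies below $(a^{l-1})^+$. Then by the definition of the ascending operator, $(a^l)^+ = \bigvee\{y \mid y \text{ covers } a^l\} \preceq (a^{l-1})^+$. But the ascending operator $(\cdot)^+$ is an order automorphism, hence strictly order-preserving, and $a^{l-1} \prec a^l$ implies $(a^{l-1})^+ \prec (a^l)^+$, a contradiction. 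Therefore a cover $a^{l+1}$ of $a^l$ with $a^{l+1} \not\preceq (a^{l-1})^+$ must exist, completing the inductive step.

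Iterating yields a ray $a^0 \prec a^1 \prec a^2 \prec \cdots$ satisfying the segment condition at every index. The argument is essentially a one-line contradiction from the strict order-preservation of $(\cdot)^+$, so I do not anticipate a serious obstacle; the only point worth verifying is that the ``escape'' condition $a^{l+1} \notin [a^{l-1}, (a^{l-1})^+]$ really is equivalent to $a^{l+1} \not\preceq (a^{l-1})^+$ in the present context, which is immediate once one notes $a^{l+1} \succeq a^{l-1}$.
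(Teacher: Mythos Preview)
Your argument is correct. The key contradiction---if every cover of $a^l$ lay in $[a^{l-1},(a^{l-1})^+]$ then $(a^l)^+ \preceq (a^{l-1})^+$, contradicting that the automorphism $(\cdot)^+$ is strictly order-preserving---is sound, and your reduction of the segment condition to $a^{l+1} \not\preceq (a^{l-1})^+$ is justified exactly as you say.

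Your route differs from the paper's. The paper proceeds constructively: since $(a^{s-1})^+$ is a coatom of the complemented modular lattice $[a^s,(a^s)^+]$ (Lemma~3.2), one chooses $a^{s+1}$ to be a complement of $(a^{s-1})^+$ there, which is an atom with $a^{s+1} \vee (a^{s-1})^+ = (a^s)^+$ and hence $a^{s+1} \not\preceq (a^{s-1})^+$. Your proof is more elementary in that it uses only the automorphism property of $(\cdot)^+$ and bypasses the complemented modular structure of $[a^s,(a^s)^+]$; the price is that it is non-constructive, merely asserting that some suitable cover exists. The paper's explicit choice yields the extra equation $a^{s+1} \vee (a^{s-1})^+ = (a^s)^+$, though this is not invoked later, so nothing is lost by your shortcut.
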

\begin{proof}
Let $x = a^0 \prec a^1 \prec \cdots \prec a^s$ be an $x$-segment.
Since $(a^{s-1})^+$ is covered by $(a^s)^+$, and $[a^s, (a^s)^+]$
is complemented (by Lemma~\ref{lem:u-rank}), 
we can choose an atom $a^{s+1}$ 
in $[a^s, (a^s)^+]$ such that $a^{s+1} \vee (a^{s-1})^+ = (a^{s})^+$; in particular 
$a^{s+1} \not \in [a^{s-1}, (a^{s-1})^+]$.
Then $a^0 \prec a^1 \prec \cdots \prec a^s \prec a^{s+1}$ is a segment.
Repeating this process, we obtain a ray 
$a^0 \prec a^1 \prec \cdots \prec a^s \prec \cdots$.
\end{proof}
$x$-segments (or $x$-rays) $(x = a_i^0 \prec a_i^1 \prec a_i^2 \prec \cdots )$ $(i=1,2,\ldots,k)$ are 
said to be {\em independent} if $a_1^1,a_2^1,\ldots,a_k^1$ are 
independent atoms in $[x, (x)^+]$, i.e., 
$r_x(a_1^1 \vee a_2^1 \vee \cdots \vee a_k^1) = k$.
An ordered set $\alpha = (a_i^l)_{i,l}$ of $k$ independent $x$-segments 
is called a {\em partial $k$-frame} at $x$.
Let $\langle \alpha \rangle$ denote the sublattice generated by all $a_{i,l}$ in $\alpha$.
\begin{Lem}\label{lem:f_i^k}
	Let $\alpha = (a_i^l)_{i=1,2,\ldots,k, l=0,1,\ldots,s_i}$ be a partial $k$-frame at $x$.  
	For an element $p \in {\cal L}$ satisfying $p \wedge (\bigvee_i a_i^1) = x$, 
	define $b_i^l$ by
	\begin{equation}\label{eqn:f_i^k}
	b_i^l := p \vee a_i^l \quad (i=1,2,\ldots,k, l=0,1,2,\ldots,s_i).
	\end{equation}
	Then $\beta := (b_i^l)_{i,l}$ is a partial $k$-frame at $p$, 
	where the map $u \mapsto p \vee u$ is an isomorphism from $\langle \alpha \rangle$ to  $\langle \beta \rangle$ with $r[x,p] = r[u,p \vee u]$.	
\end{Lem}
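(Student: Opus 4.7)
The plan is to prove the lemma in three stages: (i) establish the transversality $p \wedge u = x$ for every $u \in \langle \alpha \rangle$; (ii) deduce the asserted isomorphism and the rank identity $r[u, u \vee p] = r[x, p]$; (iii) verify that $\beta$ is a partial $k$-frame at $p$. The principal external tool throughout is Lemma~\ref{lem:isomorphism}.

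For (i), it suffices to prove $p \wedge q = x$ for $q := \bigvee_{i,l} a_i^l = \bigvee_i a_i^{s_i}$, since every $u \in \langle \alpha \rangle$ sits in $[x, q]$ and $x = p \wedge x \preceq p \wedge u \preceq p \wedge q$. I would first show that each interval $[x, a_i^l]$ is the chain $\{a_i^0, a_i^1, \ldots, a_i^l\}$: any element $y \in [x, a_i^l]$ not on the chain, with $a_i^m := y \wedge a_i^{l-1}$, would by Lemma~\ref{lem:isomorphism}(2) and a rank count in $[a_i^m, a_i^l]$ cover $a_i^m$ in ${\cal L}$, giving $a_i^l = y \vee a_i^{l-1} \preceq (a_i^m)^+ \vee a_i^{l-1} \preceq (a_i^{l-2})^+$ and contradicting the segment condition. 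Consequently $p \wedge a_i^l$ lies in this chain, and any value above $x$ would give $a_i^1 \preceq p$, contradicting the level-1 hypothesis. Iterated application of Lemma~\ref{lem:isomorphism}(2) using independence of $a_1^1, \ldots, a_k^1$ then identifies $\langle \alpha \rangle$ with the product of chains $[x, a_1^{s_1}] \times \cdots \times [x, a_k^{s_k}]$, inside which $p \wedge q$ corresponds componentwise to $(x, \ldots, x)$, yielding $p \wedge q = x$.

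For (ii), Lemma~\ref{lem:isomorphism}(1) applied to the pair $(p, q)$ gives an isomorphism $[x, q] \to [p, p \vee q]$ via $u \mapsto u \vee p$; its restriction to $\langle \alpha \rangle \subseteq [x, q]$ is a lattice isomorphism onto the sublattice generated by the $b_i^l = a_i^l \vee p$, namely $\langle \beta \rangle$. The rank identity $r[u, u \vee p] = r[x, p]$ is immediate from $r(u) + r(p) = r(u \wedge p) + r(u \vee p) = r(x) + r(u \vee p)$. For (iii), the independence of $b_1^1, \ldots, b_k^1$ as atoms in $[p, p^+]$ follows from the rank identity applied to $u = \bigvee_i a_i^1$, and the coverings $b_i^{l-1} \prec b_i^l$ transfer through the isomorphism. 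The remaining segment condition $b_i^{l+1} \not\in [b_i^{l-1}, (b_i^{l-1})^+]$ is the most delicate point: since $(\cdot)^+$ is a lattice automorphism, $(b_i^{l-1})^+ = p^+ \vee (a_i^{l-1})^+$, so the issue reduces to whether $a_i^{l+1} \preceq p^+ \vee (a_i^{l-1})^+$; applying $(\cdot)^-$, this is equivalent to $(a_i^{l+1})^- \preceq p \vee a_i^{l-1}$, which I would rule out by combining the transversality of (i), the segment condition on $\alpha$, and a rank-comparison (rc) argument.

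The main obstacle is stage (i): propagating the single level-1 hypothesis $p \wedge (\bigvee_i a_i^1) = x$ to the full transversality $p \wedge u = x$ on $\langle \alpha \rangle$ requires essential use of the segment condition on $\alpha$, since without it the intervals $[x, a_i^l]$ need not be chains and $p \wedge a_i^l$ could strictly exceed $x$ as $l$ increases.
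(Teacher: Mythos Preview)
Your strategy differs from the paper's: the paper reduces at the outset to the case where $p$ covers $x$ and then verifies the covering and segment conditions for $(b_i^l)$ by a direct induction on $l$; the isomorphism and rank statement are obtained afterwards by applying Lemma~\ref{lem:ray} to both $\alpha$ and $\beta$. Your observation that each interval $[x,a_i^l]$ is a chain, and hence $p\wedge a_i^{s_i}=x$, is correct and genuinely useful.

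The gap is the last sentence of (i). Even granting the product decomposition $\langle\alpha\rangle\cong\prod_i[x,a_i^{s_i}]$---which is exactly Lemma~\ref{lem:ray} and in the paper is \emph{derived from} the present lemma, so you would also owe an independent proof---the element $p\wedge q$ lies in the interval $[x,q]$ of ${\cal L}$, not in the (generally proper) sublattice $\langle\alpha\rangle\subsetneq[x,q]$; so there are no ``components'' of $p\wedge q$ to read off. Already for $k=2$ and $s_1=s_2=1$ in a lattice where $[x,(x)^+]$ is a non-Boolean projective plane, $[x,q]$ has atoms outside $\langle\alpha\rangle$. Knowing $p\wedge a_i^{s_i}=x$ for each $i$ does not by itself force $p\wedge\bigl(\bigvee_i a_i^{s_i}\bigr)=x$ in a modular lattice. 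The correct order of deduction is the reverse: once $\beta$ is shown to be a partial $k$-frame, Lemma~\ref{lem:ray} gives $r_p(p\vee q)=\sum_i s_i=r_x(q)$, hence $r_x(p\vee q)=r_x(p)+r_x(q)$ and $p\wedge q=x$ by (rc). Your sketch for the segment condition in (iii) is also not clearly completable as written; a clean fix, once $p\wedge a_i^{s_i}=x$ is known, is to note that $[p,b_i^{s_i}]\cong[x,a_i^{s_i}]$ is a chain by Lemma~\ref{lem:isomorphism}(1), and then that if $b_i^{l+1}\preceq(b_i^{l-1})^+$ the rank-$2$ interval $[b_i^{l-1},b_i^{l+1}]$ would be both a chain and complemented modular (by (cm2)), which is impossible.
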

\begin{proof}
	It suffices to prove the statement for 
	the case where $p$ covers $x$.
	We first show, by induction on $l$, 
	that $b_i^l$ covers $a_i^l$ and $b_i^{l-1}$ for any $i=1,2,\ldots,k$.
	Here we let $b^l := b_i^l$ and $a^l := a_i^l$ for simplicity.
    In the case of $l=1$, 
	this is true by $p \wedge (\bigvee_i a_i^1) = x = a^0$ (and equality~(\ref{eqn:valuation}) for $r_x$).
	Suppose that $l > 1$ and that $b^{l-1}$ covers $a^{l-1}$ and $b^{l-2}$.
	Necessarily $p = b^0 \not \preceq a^{l'}$ for $l' \leq l-1$.
	If $p \preceq a^l$, then it must hold $a^l = b^l = b^{l-1} = a^{l-1} \vee b^{l-2} 
	\in [a^{l-2}, (a^{l-2})^+]$; 
	this is a contradiction to $a^l \not \in [a^{l-2}, (a^{l-2})^+]$.
	Thus $b^l = p \vee a^l$ covers both of $a^l$ and $b^{l-1}$.
	
	Next we show that $b^{l+1} \not \in [b^{l-1}, (b^{l-1})^+]$.
	Now $b^l$ covers $a^l$ and $b^{l-1}$.
	Necessarily $(b^l)^+$ covers $(a^l)^+$ and $(b^{l-1})^+$; 
	in particular $(b^l)^+ = (a^l)^+ \vee (b^{l-1})^+$. 
	Also $a^{l+1} \vee (a^{l-1})^+ = (a^l)^+$  
	must hold (since $[a^{l-1}, (a^{l-1})^+] \not \ni a^{l+1} \in [a^{l},(a^l)^+]$). 
	By $a^{l+1} \preceq b^{l+1}  = b^{l} \vee a^{l+1} \preceq (a^l)^+$, 
	we have $(a^l)^+ = a^{l+1} \vee (a^{l-1})^+ \preceq b^{l+1} \vee (a^{l-1})^+ \preceq (a^l)^+$, implying $b^{l+1} \vee (a^{l-1})^+ =  (a^l)^+$.
	Therefore $(b^l)^+ = (a^l)^+ \vee (b^{l-1})^+ 
	= b^{l+1} \vee (a^{l-1})^+ \vee (b^{l-1})^+ = b^{l+1} \vee (b^{l-1})^+$.
	Since $(b^l)^+$ covers $(b^{l-1})^+$, 
	we have $b^{l+1} \not \preceq (b^{l-1})^+$, as required.
	
	Thus $(b_i^l)$ for each $i$ is a $p$-segment.
	The independence of $b_1^1, b_2^1, \ldots, b_k^1$ follows from
	$r_p(b_1^1 \vee b_2^1 \vee \cdots \vee b_k^1)
	=  r_x(a_1^1 \vee a_2^1 \vee \cdots \vee a_k^1 \vee p) - 1
	= r_x(p) + r_x(a_1^1 \vee a_2^1 \vee \cdots \vee a_k^1) - r_x(x) - 1 = k$.  
	
	If $u = a_1^{z_1} \vee a_2^{z_2} \vee \cdots \vee a_k^{z_k} \in \langle \alpha \rangle$, then $p \vee u =  
	b_1^{z_1} \vee b_2^{z_2} \vee \cdots \vee b_k^{z_k} \in \langle \beta \rangle$, and 
	the  statement for $u \mapsto p \vee u$ is an immediate consequence of 
	the next lemma (and (\ref{eqn:valuation})).
\end{proof}

Let $\alpha = (a_i^l)_{i=1,2,\ldots,k, l=0,1,\ldots,s_i}$ 
at a partial $k$-frame at $x$.
For an integer vector $z \in [{\bf 0}, s] \subseteq \ZZ^k$ with $s = (s_1,s_2,\ldots,s_k)$,
 define an element 
$\alpha(z) \in \langle \alpha \rangle$ by
\begin{equation}\label{eqn:map}
\alpha(z) := a_1^{z_1} \vee a_2^{z_2} \vee \cdots \vee a_k^{z_k}.
\end{equation}

\begin{Lem}\label{lem:ray}
	For a partial $k$-frame $\alpha = (a_i^l)_{i=1,2,\ldots,k,\, l=0,1,\ldots,s_i}$ 
	at $x$, 
	the sublattice $\langle \alpha \rangle$ 
	is isomorphic to $[{\bf 0}, s] \subseteq \ZZ^n$, where the map $z \mapsto \alpha(z)$ is an isomorphism from $[{\bf 0}, s]$ to $\langle \alpha \rangle$
	such that 
	\begin{equation}\label{eqn:l_m}
	r_x(\alpha(z)) = z_1 + z_2 + \cdots + z_k.
	\end{equation}
\end{Lem}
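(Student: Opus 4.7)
The plan is to proceed by induction on $k$. The base case $k=1$ is immediate: $\langle \alpha \rangle = \{a_1^0, a_1^1, \ldots, a_1^{s_1}\}$ is a chain (each $a_1^l$ covers $a_1^{l-1}$), so $z \mapsto a_1^z$ gives the desired isomorphism from $[0, s_1]$ to $\langle \alpha \rangle$, and the rank formula follows from (JD).

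For $k \geq 2$, I would put $\alpha' := (a_i^l)_{i<k}$, a partial $(k-1)$-frame at $x$, and apply the inductive hypothesis to obtain $\langle \alpha' \rangle \cong [{\bf 0}, s']$ together with $r_x(\alpha'(z')) = \sum_{i<k} z_i$. The central maneuver is to invoke Lemma~\ref{lem:f_i^k} with base frame $\alpha'$ and $p = a_k^l$ for each $l \in [0, s_k]$, which yields a partial $(k-1)$-frame $\beta^{(l)} := (a_k^l \vee a_i^m)_{i<k,\,m}$ at $a_k^l$. Reapplying the inductive hypothesis to $\beta^{(l)}$ at $a_k^l$ gives $r_{a_k^l}(\beta^{(l)}(z')) = \sum_{i<k} z_i$. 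Since $\alpha(z) = a_k^{z_k} \vee \alpha'(z') = \beta^{(z_k)}(z')$ sits above $a_k^{z_k}$, the rank formula $r_x(\alpha(z)) = r_x(a_k^{z_k}) + r_{a_k^{z_k}}(\alpha(z)) = z_k + \sum_{i<k} z_i = \sum_i z_i$ follows from valuation additivity along $x \preceq a_k^{z_k} \preceq \alpha(z)$.

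The main obstacle is checking the hypothesis of Lemma~\ref{lem:f_i^k}, namely $a_k^l \wedge \bigvee_{i<k} a_i^1 = x$ for each $l$. I would reduce this to the single-segment fact $a_k^l \wedge (x)^+ = a_k^1$ for $l \geq 1$, proved as follows: the meet lies in the chain $[a_k^1, a_k^l]$, so it equals $a_k^j$ for some $j \in [1, l]$; if $j \geq 2$, then $a_k^j \preceq (x)^+ \preceq (a_k^{j-2})^+$ (the second inequality holding because $x \preceq a_k^{j-2}$ and $(\cdot)^+$ is order-preserving), placing $a_k^j \in [a_k^{j-2}, (a_k^{j-2})^+]$ in contradiction to the segment condition. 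Since $\bigvee_{i<k} a_i^1 \preceq (x)^+$, this forces $a_k^l \wedge \bigvee_{i<k} a_i^1 \preceq a_k^l \wedge (x)^+ = a_k^1$; combined with $\preceq \bigvee_{i<k} a_i^1$ and the independence of $a_1^1, \ldots, a_k^1$ as atoms in the complemented modular lattice $[x, (x)^+]$, the meet collapses to $x$.

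Once the rank formula is in hand, the join identity $\alpha(z) \vee \alpha(\bar z) = \alpha(\max(z, \bar z))$ is immediate from $a_i^{z_i} \vee a_i^{\bar z_i} = a_i^{\max(z_i, \bar z_i)}$, and the meet identity follows by writing (\ref{eqn:valuation}) as $r_x(\alpha(z) \wedge \alpha(\bar z)) = \sum \min(z, \bar z) = r_x(\alpha(\min(z, \bar z)))$, then applying (rc) to the trivial inequality $\alpha(\min(z, \bar z)) \preceq \alpha(z) \wedge \alpha(\bar z)$. Injectivity of $z \mapsto \alpha(z)$ follows from the rank formula together with the join identity, and closure of the image under meets and joins, combined with $a_i^l = \alpha(l e_i)$, forces $\langle \alpha \rangle = \{\alpha(z) : z \in [{\bf 0}, s]\}$, completing the isomorphism.
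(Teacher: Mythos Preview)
Your overall strategy---induction on $k$, lifting the $(k-1)$-frame to $a_k^{z_k}$ via Lemma~\ref{lem:f_i^k}, then recovering the lattice isomorphism from the rank formula---coincides with the paper's; your derivation of the join/meet identities and injectivity from~(\ref{eqn:l_m}) is a clean variant of the paper's uniqueness-of-representation argument.

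There is, however, a real gap in your hypothesis verification. You write that $a_k^l \wedge (x)^+$ ``lies in the chain $[a_k^1, a_k^l]$, so it equals $a_k^j$ for some $j$.'' You have only placed the meet in the \emph{interval} $[a_k^1, a_k^l]$ of ${\cal L}$; that this interval coincides with the chain $\{a_k^1,\ldots,a_k^l\}$ is true but not immediate, and you give no argument for it. One fix is to show by induction on $l-j$ that $[a_k^j, a_k^l]$ is a chain: in the inductive step, any atom $b \neq a_k^{j+1}$ over $a_k^j$ inside $[a_k^j,a_k^l]$ would give $b \vee a_k^{j+1} \in [a_k^{j+1}, a_k^l]$, which is a chain by induction; since $b \vee a_k^{j+1}$ has rank $2$ over $a_k^j$ it must equal $a_k^{j+2}$, yet $b \vee a_k^{j+1} \in [a_k^j, (a_k^j)^+]$ because both $b$ and $a_k^{j+1}$ cover $a_k^j$, contradicting the segment condition. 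The paper avoids this detour altogether by applying Lemma~\ref{lem:f_i^k} in the \emph{opposite} direction: treat the single segment $(a_k^l)_l$ as a partial $1$-frame at $x$ and take $p := \bigvee_{i<k} a_i^1$ (where $p \wedge a_k^1 = x$ is immediate from independence); then $(p \vee a_k^l)_l$ is a $p$-segment, giving $r[p, p \vee a_k^{z_k}] = z_k = r[x, a_k^{z_k}]$, and modularity plus (rc) yield $p \wedge a_k^{z_k} = x$ directly.
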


\begin{proof}
	We first show equation (\ref{eqn:l_m}) by induction on $k$. 
	In the case of $k = 1$, this is obvious. Suppose $k > 1$.
	Then $a^{z_k}_k \wedge (\bigvee_{1 \leq i \leq k-1} a_i^1) = x$ holds.
	Indeed, by Lemma~\ref{lem:indep} with $p = \bigvee_{1 \leq i \leq k-1} a_i^1$, 
	chain $(p \vee a_k^{l})_{l=0,1,2,\ldots,z_k}$ is a $p$-segment (of length $z_k$). 
	Thus $ r[x, a_k^{z_k}] = z_k = r[p, p \vee a_k^{z_k}] = r[p \wedge a_k^{z_k},a_k^{z_k}]$ and (rc) imply $a^{z_k}_k \wedge p= x$.
	Define a partial $(k-1)$-frame $\beta = (b_i^l)_{i,l}$ at $a_k^{z_k}$ 
	according to (\ref{eqn:f_i^k}) with $p = a_k^{z_k}$.
	Then 
	$\alpha(z)  = \beta (z')$ 
	and also $r_{x}(\alpha(z)) 
	= z_k + r_{p}(z')$, where $z'$ denotes the vector in $\ZZ^{k-1}$ obtained 
	from $z \in \ZZ^k$ by omitting the $k$-th coordinate $z_k$ of $z$.
	Since $\beta$ is a partial $(k-1)$-frame, by induction we have
	$r_{p}(z') = z_1+z_2+ \cdots + z_{k-1}$, from which we obtain (\ref{eqn:l_m}).
	
	Next we show
	\begin{equation}\label{eqn:distri}
	(a^{z_1}_1 \vee a^{z_2}_2 \vee \cdots \vee a^{z_{k-1}}_{k-1}) \wedge a^{z_{k}}_k = x
	\end{equation}
	Then ($\succeq$) is obvious.	
	Consider $r_x (a^{z_1}_1 \vee a^{z_2}_2 \vee \cdots \vee a^{z_{k-1}}_{k-1}) + r_x(a^{z_k}_k) = r_x((a^{z_1}_1 \vee a^{z_2}_2 \vee \cdots \vee a^{z_{k-1}}_{k-1}) \wedge a^{z_{k}}_k) + r_x(\alpha(z))$.
	By (\ref{eqn:l_m}) and (rc), we have ($=$).
	%it holds $r_x((a^{z_1}_1 \vee a^{z_2}_2 \vee \cdots \vee a^{z_{k-1}}_{k-1}) \wedge a^{z_{k}}_k) = 0 = r_x(x)$. 
	%This implies 
	
	We are ready to prove the statement.
	By using (\ref{eqn:distri}), 
	every element $u$ in $\langle \alpha \rangle$ 
	can be written as
	$
	u = \alpha(z)
	$
	for some $z \in [{\bf 0},s]$.
	It suffices to show that this expression is unique.
	For each $i=1,2,\ldots,k$, 
	choose the maximum index $z_i' \in \ZZ_+$ 
	such that $a^{z_i'}_i \preceq u$.
	Then $z_i \leq z_i'$ (since $a^{z_i}_i \preceq u$).
	Consider $u' := \alpha(z')$. 
	Then $u' \preceq u$, implying $r_x(z') \leq r_x(z)$.
	On the other hand, $r_x(u) = z_1+ z_2 + \cdots + z_k \leq z_1' + z_2' + \cdots + z_k' = r_x(u')$. Thus, by (rc), it must hold $u = u'$ and $z_i = z_i'$ for $i=1,2,\ldots,k$.
\end{proof}
An ordered set of $n$ independent rays (at $x$)
is particularly called a {\em frame}.
For a frame $\alpha = (a_i^l)$, the sublattice $\langle \alpha \rangle$ 
is isomorphic to $\ZZ_+^n := (\ZZ \cap [0,\infty))^n$ 
so that $(\alpha(z))^+ = \alpha(z + {\bf 1})$.
\begin{Lem}\label{lem:Z^n}
	The sublattice of ${\cal L}$ is a $\ZZ^n$-skeleton if and only 
	if it is equal to $\bigcup_{k =0,1,2,\ldots}\langle \alpha \rangle^{-k}$ for a frame $\alpha$.
\end{Lem}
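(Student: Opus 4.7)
The plan is to prove the two directions separately and, in both, to pass through the identification $\langle \alpha \rangle \cong \ZZ_+^n$ supplied by Lemma~\ref{lem:ray}, together with the fact that $(\alpha(z))^+ = \alpha(z + {\bf 1})$ (stated in the paragraph just before the lemma).

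For the ``only if'' direction, let ${\cal F}$ be a $\ZZ^n$-skeleton with lattice isomorphism $\psi : \ZZ^n \to {\cal F}$ satisfying $(\psi(z))^+ = \psi(z + {\bf 1})$. I would set $x := \psi({\bf 0})$ and define $a_i^l := \psi(l e_i)$ for $i = 1, \ldots, n$ and $l \geq 0$. Each $(a_i^l)_{l \geq 0}$ is an $x$-ray: covering is preserved under $\psi$, and $a_i^{l+1} \not\in [a_i^{l-1},(a_i^{l-1})^+]$ because in $\ZZ^n$ the inequality $(l+1)e_i \leq (l-1)e_i + {\bf 1}$ fails in the $i$-th coordinate. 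Independence of $a_1^1,\ldots,a_n^1$ follows from $\bigvee_i a_i^1 = \psi({\bf 1}) = (x)^+$ together with Lemma~\ref{lem:u-rank}, which gives $r_x((x)^+) = n$. This produces a frame $\alpha$ at $x$. Since $\psi(z) = \bigvee_i \psi(z_i e_i) = \bigvee_i a_i^{z_i}$ for every $z \in \ZZ_+^n$, and $\psi(\ZZ_+^n)$ is already a sublattice, we get $\langle \alpha \rangle = \psi(\ZZ_+^n)$. Then $\langle \alpha \rangle^{-k} = \psi(\ZZ_+^n - k {\bf 1})$ (using $(\cdot)^{-k}$ on ${\cal F}$ coincides with translation by $-k{\bf 1}$), so $\bigcup_{k \geq 0} \langle \alpha \rangle^{-k} = \psi(\ZZ^n) = {\cal F}$.

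For the ``if'' direction, given a frame $\alpha$ at $x$, I would first observe $\langle \alpha \rangle \subseteq \langle \alpha \rangle^{-1}$: for $u = \alpha(z) \in \langle \alpha \rangle$, one has $\alpha(z + {\bf 1}) \in \langle \alpha \rangle$ and $u = (\alpha(z + {\bf 1}))^-$. Applying $(\cdot)^-$ repeatedly yields an increasing chain $\langle \alpha \rangle \subseteq \langle \alpha \rangle^{-1} \subseteq \langle \alpha \rangle^{-2} \subseteq \cdots$ of sublattices (each $\langle \alpha \rangle^{-k}$ is a sublattice since it is the image of a sublattice under the automorphism $(\cdot)^{-k}$), so ${\cal F} := \bigcup_{k \geq 0} \langle \alpha \rangle^{-k}$ is a sublattice of ${\cal L}$. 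I would then define $\phi : \ZZ^n \to {\cal F}$ by $\phi(z) := (\alpha(z + k {\bf 1}))^{-k}$ for any $k \geq 0$ with $z + k {\bf 1} \in \ZZ_+^n$; independence of the choice of $k$ follows from $\alpha(z + k'{\bf 1}) = (\alpha(z + k {\bf 1}))^{+(k'-k)}$ and the fact that $(\cdot)^+$ is an automorphism. Checking that $\phi$ is an order-isomorphism onto ${\cal F}$, and that $\phi(z + {\bf 1}) = (\phi(z))^+$, shows that ${\cal F}$ is indeed a $\ZZ^n$-skeleton.

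The arguments are essentially bookkeeping across the identifications, so no serious obstacle is expected. The only step with any content is verifying independence of the rays in the ``only if'' direction, where one must invoke Lemma~\ref{lem:u-rank} to know that the rank of $[\psi({\bf 0}), \psi({\bf 1})]$, computed in the ambient ${\cal L}$ rather than in ${\cal F}$, is the uniform-rank $n$; everything else reduces to manipulations inside $\ZZ_+^n$ and $\ZZ^n$ already established by Lemma~\ref{lem:ray} and the definition of $\ZZ^n$-skeleton.
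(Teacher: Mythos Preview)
Your proposal is correct and follows essentially the same route as the paper's proof: in both directions the paper uses the frame $\alpha = (l e_i)_{i,l}$ inside a given $\ZZ^n$-skeleton, and conversely builds the isomorphism $z \mapsto (\alpha(z+k{\bf 1}))^{-k}$ from $(\ZZ \cap [-k,\infty))^n$ to $\langle \alpha \rangle^{-k}$ together with the inclusion $\langle \alpha \rangle^{-k} \subseteq \langle \alpha \rangle^{-k-1}$ via $\alpha(z)^{-k} = \alpha(z+{\bf 1})^{-k-1}$. Your write-up simply spells out more of the verifications (the segment condition for $(l e_i)_l$, independence via Lemma~\ref{lem:u-rank}, and well-definedness of $\phi$) that the paper leaves as ``observe'' and ``easy to see''.
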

\begin{proof}
	Let ${\cal F}$ be a $\ZZ^n$-skeleton, and identified with $\ZZ^n$.
	Observe that the chain $(l e_i)_{l=0,1,2,\ldots}$
	is a ${\bf 0}$-ray. Then the set $\alpha = (l e_i)_{i,l}$ is a frame with 
	${\cal F} = \ZZ^n = \bigcup_{k=0,1,2,\ldots} (\ZZ^n_+ - k {\bf 1}) = \bigcup_{k=0,1,2,\ldots} \langle \alpha \rangle^{-k}$.
	Let $\alpha = (a_i^l)_{i,l}$ be a frame. 
	It is easy to see that  
	$\langle \alpha \rangle^{-k}$ is isomorphic to $(\ZZ \cap [-k, \infty))^n$ by 
	$z \mapsto (\alpha(z + k {\bf 1}))^{-k}$.
	Also it holds $\langle \alpha \rangle^{-k} \subseteq \langle \alpha \rangle^{-k-1}$
	since $\alpha(z)^{-k} = \alpha(z+{\bf 1})^{-k-1}$.
	Therefore $\bigcup_{k=0,1,2,\ldots}\langle \alpha \rangle^{-k}$ 
	is a $\ZZ^n$-skeleton.  
\end{proof}

\begin{Lem}\label{lem:preceq}
	For $x,y \in {\cal L}$, 
	there is $k \geq 0$ 
	such that $x \preceq (y)^{+k}$.
\end{Lem}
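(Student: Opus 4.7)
My plan is to control the height of $x$ above $y$ by exploiting that each covering step costs exactly one application of $(\cdot)^+$, and that the relevant interval has finite rank by assumption (F).

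The first step is to reduce to the case $x \succeq y$. Put $z := x \vee y$, which exists since $\mathcal{L}$ is a lattice. Clearly $x \preceq z$ and $y \preceq z$, so it suffices to find $k \geq 0$ with $z \preceq (y)^{+k}$. By Lemma~\ref{lem:isomorphism}~(1) the interval $[y, z] = [y, x \vee y]$ is isomorphic to $[x \wedge y, x]$, which has finite rank by (F); let $r := r[y, z]$ and choose a maximal chain
\[
y = y_0 \prec y_1 \prec y_2 \prec \cdots \prec y_r = z,
\]
in which each $y_j$ covers $y_{j-1}$ (such a chain exists by (JD) applied in the modular interval $[y,z]$).

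The key step is the claim that $y_j \preceq (y)^{+j}$ for each $j = 0, 1, \ldots, r$, proved by induction on $j$. The base case $j=0$ is $y_0 = y \preceq y = (y)^{+0}$. For the inductive step, assume $y_{j-1} \preceq (y)^{+(j-1)}$. Since $(\cdot)^+$ is an automorphism, it is order-preserving, so applying $(\cdot)^+$ yields $(y_{j-1})^+ \preceq ((y)^{+(j-1)})^+ = (y)^{+j}$. On the other hand, $y_j$ is one of the elements covering $y_{j-1}$, so by the very definition~(\ref{eqn:ascending}) of the ascending operator, $y_j \preceq (y_{j-1})^+$. Combining these two inequalities gives $y_j \preceq (y)^{+j}$, completing the induction.

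Taking $j = r$ yields $z = y_r \preceq (y)^{+r}$, and therefore $x \preceq z \preceq (y)^{+r}$, so $k := r$ works. There is no real obstacle; the only subtlety is remembering that (F) guarantees finiteness of $r[y, x \vee y]$, which is what allows the induction to terminate after finitely many applications of $(\cdot)^+$. The argument uses only modularity, (F), and the automorphism property of $(\cdot)^+$, so it does not require any of the finer structural results (frames, skeletons) developed in the rest of the section.
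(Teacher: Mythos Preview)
Your proof is correct, and it takes a somewhat different route from the paper's. The paper argues ``from below'': assuming $x \not\preceq y$, it picks an atom $a$ in $[x \wedge y, x]$ and uses the modular rank equality to see that $a \vee y$ is an atom of $[y,(y)^+]$, whence $x \wedge y \prec a \preceq x \wedge (y)^+$; iterating shows $x \wedge (y)^{+k} = x$ once $k \geq r[x \wedge y, x]$. You instead argue ``from above'': pass to $z = x \vee y$, climb a maximal chain in $[y,z]$, and absorb each covering step by one application of $(\cdot)^+$ via the defining property of the ascending operator and its order-preservation. Both arguments yield the same bound $k = r[x \wedge y, x] = r[y, x \vee y]$. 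Your version is a bit more elementary in that it never invokes the valuation identity; it relies only on (F), the definition of $(\cdot)^+$, and that an automorphism preserves order. Incidentally, your appeal to Lemma~\ref{lem:isomorphism}~(1) is not really needed: assumption (F) already gives $r[y, x \vee y] < \infty$ directly, so the reduction to $[x \wedge y, x]$ can be omitted.
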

\begin{proof}
	We may assume that $x \not \preceq y$.
	Hence $x \succ x \wedge y$.
	Choose an atom $a$ in $[x \wedge y, x]$.
	By $a \wedge y = x \wedge y$ and modularity equality~(\ref{eqn:valuation}) for $r_{x \wedge y}$, 
	$a \vee y$ is an atom in $[y, (y)^+]$.
	Consequently $x \wedge y \prec a \preceq x \wedge (y)^+$.
	Thus, for $k \geq r[x \wedge y,x]$, it holds $x \wedge (y)^{+k} = x$, 
	implying $x \preceq (y)^{+k}$.
\end{proof}

\begin{proof}[Proof of Lemma~\ref{lem:twochains_e}]	We may assume that both $C$ and $D$ are maximal short chains (of length $n$).
Suppose that $C \subseteq [x,(x)^+]$ 
and $D \subseteq [y, (y)^+]$.
We may assume that 
$x \preceq y$. 
Indeed, for the general case, 
choose $k$ such that $x \preceq (y)^{+k}$ by Lemma~\ref{lem:preceq}.
Then $y' := (y)^{+k}$ satisfies $x \preceq y'$.
Any $\ZZ^n$-skeleton containing $C$ and $(D)^{+k} \subseteq [y', (y')^+]$, 
also contains $C$ and $D$. 

We first define elements $x_j,y_j,h_j$ and chains $C_j,D_j$, 
along an intuition in Figure~\ref{fig:fig1}.
	\begin{figure}[t]
		\begin{center}
			\includegraphics[scale=0.8]{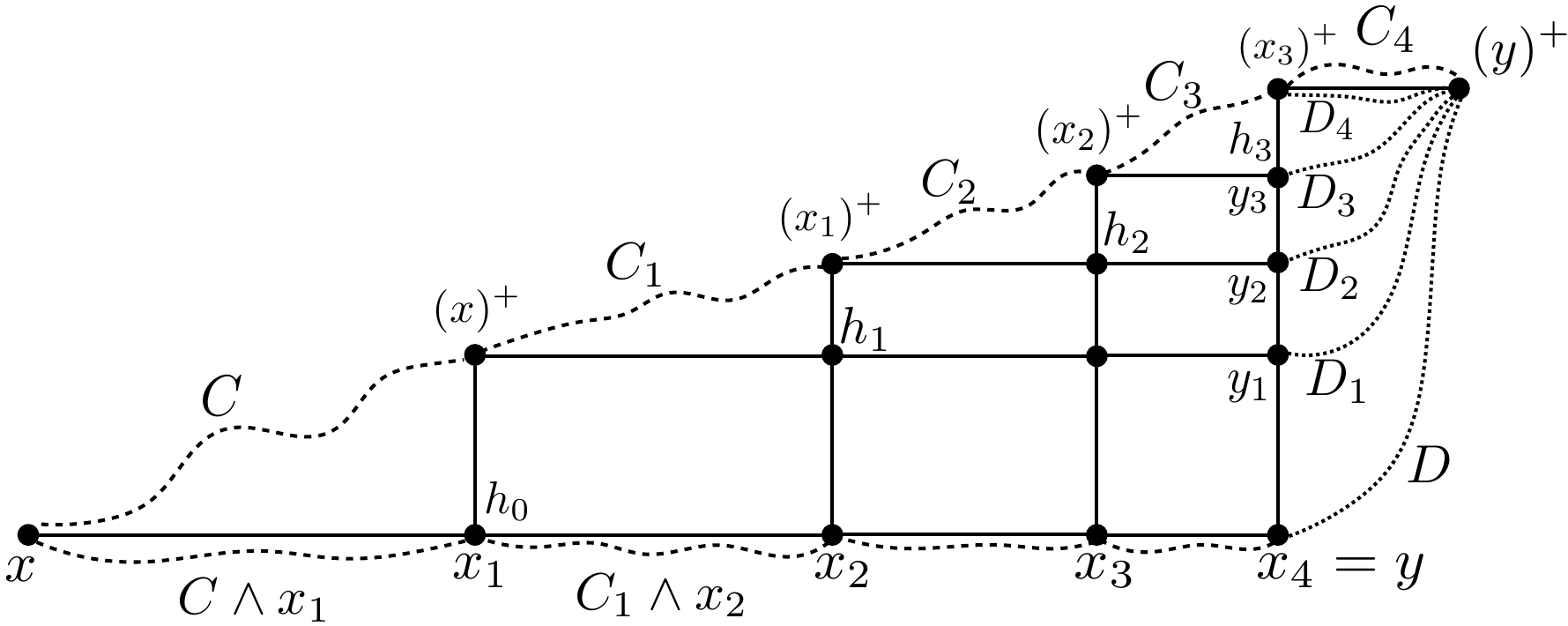}
			\caption{$x_j,y_j,h_j,C_j,D_j$}
			\label{fig:fig1}
		\end{center}
	\end{figure}\noindent
For $j=0,1,2,\ldots$, define $x_j$ by $x_0 := x$ and
\begin{eqnarray*}
x_j &:=& (x_{j-1})^+ \wedge y \\
&= &\bigvee \{a \mid \mbox{$a$ is $x_{j-1}$ or an atom in $[x_{j-1},y]$} \} \quad \in [x_{j-1},(x_{j-1})^+], 
\end{eqnarray*}
where the (second) equality follows from the observation 
that $(x_{j-1})^+ \wedge y$ belongs to complemented modular lattice 
$[x_{j-1}, (x_{j-1})^+]$ (by $x_{j-1} \preceq y$) 
and hence $(x_{j-1})^+ \wedge y$ is the join of atoms $a$ in 
$[x_{j-1}, (x_{j-1})^+]$ (with $a \preceq y$).
For some $m (\leq r[x,y])$, 
it holds $x = x_0 \prec x_1 \prec \cdots \prec x_m = x_{m+1} = \cdots = y$.
For $j=0,1,2,\ldots,m$, define short chain $C_j$ by $C_0 := C$ 
and
\begin{equation*}
C_j := (C_{j-1} \wedge x_j)^+. 
\end{equation*}
Since $C_{j-1} \wedge x_j$ is a chain in $[x_{j-1},x_j]$, 
$C_j$ is a chain in $[(x_{j-1})^+, (x_{j})^+]$.
Define $y_j$ and $D_j$ by $y_0= y$ and $D_0 = D$, and
\begin{eqnarray*}
y_j & := & (x_{j-1})^+ \vee y,  \\
D_j & := & D_{j-1} \vee y_j = D \vee y_j.
\end{eqnarray*}
Finally define $h_j$ by $h_0 := x_1$ and
\begin{equation}\label{eqn:h_j}
h_j := x_{j+1} \vee (x_{j-1})^+ = (x_{j})^+ \wedge y_j, 
\end{equation}
where the last equality follows from $x_{j+1} \vee (x_{j-1})^+ = ((x_j)^+ \wedge y) \vee (x_{j-1})^+ =  (x_j)^+ \wedge (y \vee (x_{j-1})^+) = (x_j)^+ \wedge y_j$.

Now $C_m$ and $D_m$ are maximal chains 
in complemented modular lattice $[(x_{m-1})^+, (y)^+]$.
By Lemma~\ref{lem:twochains}, there is a basis $a_1,a_2,\ldots,a_{k}$ 
of $[(x_{m-1})^+,(y)^+]$ 
such that $\langle a_1,a_2,\ldots,a_{k} \rangle$ contains $C_m$ and $D_m$.
In particular, $\alpha := ((x_{m-1})^+,a_i)_{i=1,2,\ldots k}$ is a partial $k$-frame at $(x_{m-1})^+$
such that $\langle \alpha \rangle$ contains $C_m$ and $D_m$.

We are going to show that a (given) partial $k$-frame 
$\alpha = (a_i^l)_{i=1,\ldots,k,\, l=0,1,\ldots,s_i,}$ 
at $(x_{j})^+$ with $ \langle \alpha \rangle \supseteq 
C_{j+1},D_{j+1}$ can be   
extended to a partial $k'$-frame $\beta = (b_i^l)_{i,l}$ at $(x_{j-1})^+$ 
with $\langle \beta \rangle \supseteq  C_{j}, D_{j}$, 
where we let $x_{-1} := (x)^{-}$ so that $(x_{-1})^+ = x$.
The case of $j=0$ is our goal.
\begin{figure}[t]
	\begin{center}
		\includegraphics[scale=0.8]{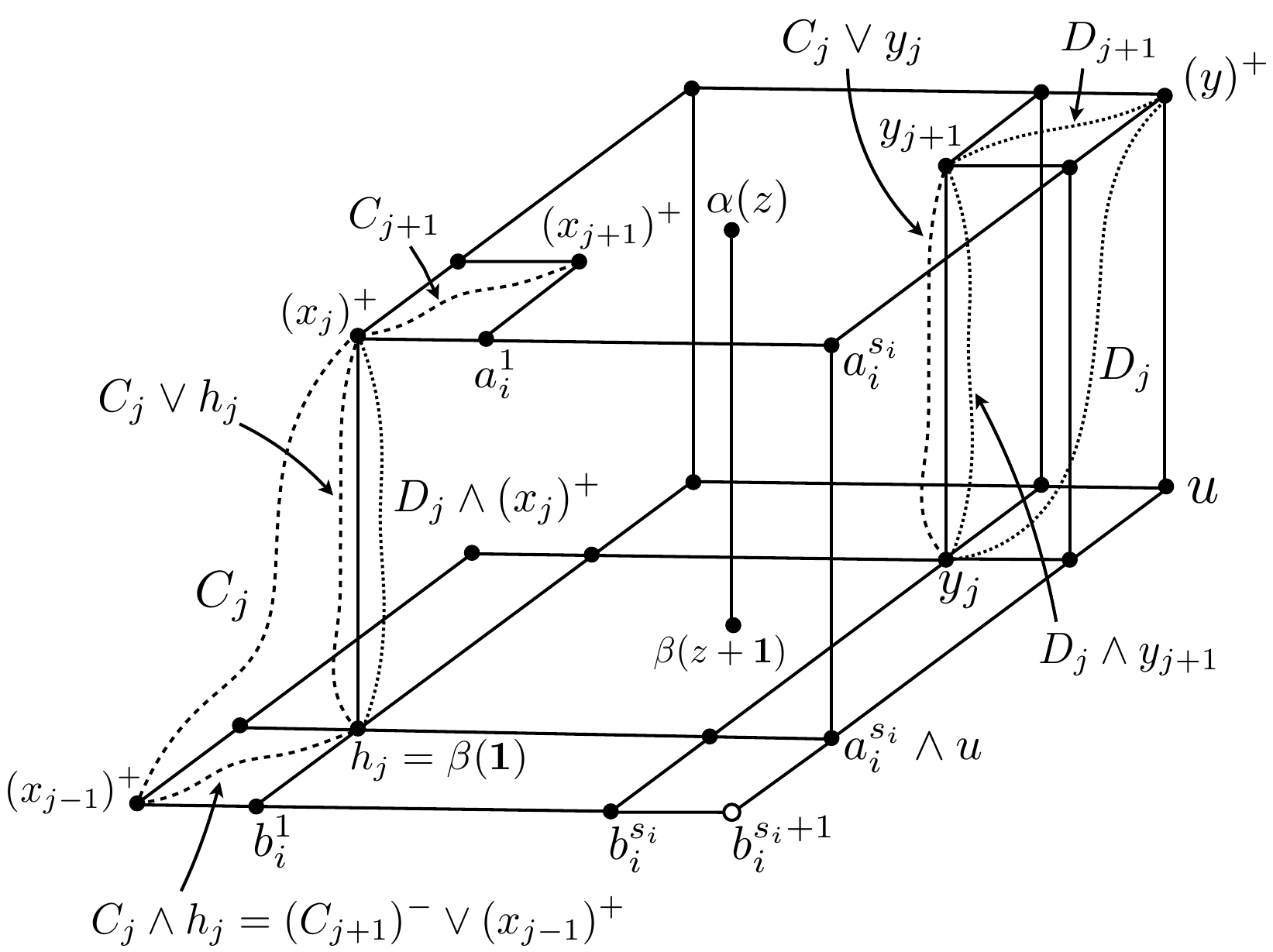}
		\caption{Extending a partial $k$-frame}
		\label{fig:fig2}
	\end{center}
\end{figure}\noindent
Figure~\ref{fig:fig2} illustrates an intuition 
behind the argument we proceed with. 
We assume that $\alpha({\bf 1}) = (x_{j+1})^+$ 
and $\alpha(s) = (y)^+$ for $s := (s_1,s_2,\ldots,s_k)$; 
this is true for the base case of the induction.
Let $b_i^l$ be defined by
\begin{equation*}
b_i^l := (x_{j-1})^+ \vee (a_i^l)^{-} \quad (i=1,2,\ldots,k, l=0,1,2,\ldots,s_i).
\end{equation*}
Obviously $((a_i^l)^{-})$ is a partial $k$-frame at $x_j$. 
By $(x_{j-1})^+ \wedge (\alpha({\bf 1}))^- = (x_{j-1})^+ \wedge x_{j+1} = x_j$ and Lemma~\ref{lem:f_i^k},  
$\beta := (b_i^l)_{i,l}$ is a partial $k$-frame 
at $(x_{j-1})^+$. Here
$\langle \beta \rangle$ contains 
$C_{j} \wedge h_{j}$ since 
$\langle \beta \rangle \supseteq (x_{j-1})^+ \vee (C_{j+1})^- = (x_{j-1})^+ \vee (C_{j} \wedge x_{j+1})
= C_{j} \wedge (x_{j+1} \vee (x_{j-1})^+) = C_{j} \wedge h_{j}$; 
the last equality follows from~(\ref{eqn:h_j}).
Also $\beta(s) = (x_{j-1})^+ \vee (\alpha(s))^- = (x_{j-1})^+ \vee ((y)^+)^- = (x_{j-1})^+ \vee y = y_{j}$.
Choose any complement $u$ of $y_{j+1}$ in $[y_{j},(y)^+] \subseteq [y,(y)^+]$
such that $\langle [y_{j},y_{j+1}] \cup [y_{j},u] \rangle$ contains $D_{j}$ 
(via (cm2) and Lemma~\ref{lem:complement}~(1)). 
Then $u$ is also a complement of $(x_j)^+$ in $[h_j,(y)^+]$.
Indeed, $y_{j+1} = (x_j)^+ \vee y = (x_j)^+ \vee y_{j}$ implies that
$(x_j)^+ \vee u = (y)^+$. By this fact with (\ref{eqn:valuation}) for $r_{h_j}$, 
$r[u,(y)^+] = r[y_j,y_{j+1}] = r[h_j,(x_j)^+]$ (by (\ref{eqn:h_j}))
and the rank-comparison argument (rc) for $h_j \preceq (x_j)^+ \wedge u$, 
we conclude $(x_j)^+ \wedge u = h_j$.
\begin{Clm} For $z \in [{\bf 0},s]$, the following hold:
	\begin{itemize}
		\item[(1)] 
		$\beta(z+ e_i) \preceq  \alpha(z) \wedge u$ if $z_i < s_i$.
		\item[(2)] $r[\beta(z), \alpha(z) \wedge u] = k$.
	\end{itemize}
	In particular, $\beta(z+{\bf 1}) = \alpha(z) \wedge u$ 
	for  $z \in [{\bf 0}, s-{\bf 1}]$.
\end{Clm}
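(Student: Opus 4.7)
The plan is to prove parts (1) and (2) separately and then derive the ``in particular'' equality $\beta(z+{\bf 1})=\alpha(z)\wedge u$ from them by the rank-comparison argument~(rc).

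For~(1), I expand $\beta(z+e_i) = (x_{j-1})^+ \vee (a_i^{z_i+1})^- \vee \bigvee_{l\neq i}(a_l^{z_l})^-$ and verify that each summand is $\preceq \alpha(z)$ and $\preceq u$. The bound $\preceq u$ is obtained by showing each summand lies below $y_j \preceq u$: $(x_{j-1})^+ \preceq y_j$ by the definition of $y_j$; and since $a_l^{z_l}, a_i^{z_i+1} \preceq \alpha(s) = (y)^+$ and $(\cdot)^-$ preserves order, $(a_l^{z_l})^-,(a_i^{z_i+1})^- \preceq ((y)^+)^- = y \preceq y_j$. The bound $\preceq \alpha(z)$ is clear for $(x_{j-1})^+ \preceq (x_j)^+ = \alpha({\bf 0})$ and, for $l \neq i$, for $(a_l^{z_l})^- \preceq a_l^{z_l}$; the crux is $(a_i^{z_i+1})^- \preceq a_i^{z_i}$, which follows from the dual formula $(w)^- = \bigwedge \{v \mid v \text{ is covered by } w\}$ established at the start of Section~\ref{subsec:U=>E}, applied to $w = a_i^{z_i+1}$ and $v = a_i^{z_i}$ (a covering pair in the segment $(a_i^l)_l$).

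For~(2), I compute $r[(x_{j-1})^+, \alpha(z) \wedge u]$ by modularity and then subtract $r[(x_{j-1})^+, \beta(z)] = \sum_l z_l$ from Lemma~\ref{lem:ray}. Since $\alpha(z) \succeq (x_j)^+$ and $(x_j)^+ \vee u = (y)^+$, one has $\alpha(z) \vee u = (y)^+$; then Lemma~\ref{lem:isomorphism}(1) gives $r[\alpha(z) \wedge u, u] = r[\alpha(z), (y)^+] = \sum_l (s_l - z_l)$. To compute $r[(x_{j-1})^+, u]$ I decompose along $h_j$: the summand $r[(x_{j-1})^+, h_j] = r[x_j, x_{j+1}] = k$ via Lemma~\ref{lem:isomorphism}(1) applied to $h_j = (x_{j-1})^+ \vee x_{j+1}$ using $(x_{j-1})^+ \wedge x_{j+1} = x_j$; and $r[h_j, u] = r[(x_j)^+, (y)^+] = \sum_l s_l$ since $u$ is the complement of $(x_j)^+$ in $[h_j, (y)^+]$. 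Combining, $r[(x_{j-1})^+, \alpha(z) \wedge u] = k + \sum_l z_l$, and subtracting gives $k$.

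For the last sentence, I observe that $z + {\bf 1} = \bigvee_i (z + e_i)$ in $[{\bf 0}, s] \subseteq \ZZ^k$, so Lemma~\ref{lem:ray} yields $\beta(z + {\bf 1}) = \bigvee_i \beta(z + e_i) \preceq \alpha(z) \wedge u$ by part~(1). Both sides have rank $\sum_l z_l + k$ over $(x_{j-1})^+$ --- the left by Lemma~\ref{lem:ray}, the right by~(2) --- so~(rc) forces equality. I expect the main obstacle to be the inclusion $(a_i^{z_i+1})^- \preceq a_i^{z_i}$ in~(1): this is the one place where the uniform-modular hypothesis enters essentially, as the dual meet-formula for $(\cdot)^-$ requires $(\cdot)^+$ to be an automorphism; the remainder is routine rank arithmetic in modular lattices.
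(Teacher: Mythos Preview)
Your argument is correct and reaches the same conclusion as the paper, but via a different decomposition. For~(1), you expand $\beta(z+e_i)$ as a join of generators and bound each term separately, the crucial inequality $(a_i^{z_i+1})^-\preceq a_i^{z_i}$ coming from the meet-formula for $(\cdot)^-$; the paper instead works structurally inside the rank-$n$ interval $[(\alpha(z))^-,\alpha(z)]$, showing that $\beta(z)$ is a join of atoms there (by transporting the atom-join description of $(x_{j-1})^+$ in $[x_j,(x_j)^+]$ via Lemma~\ref{lem:isomorphism}(1)) and that $(\alpha(z+e_i))^-$ is one more atom, so $\beta(z+e_i)=\beta(z)\vee(\alpha(z+e_i))^-\preceq\alpha(z)$. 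For~(2), you compute $r[(x_{j-1})^+,\alpha(z)\wedge u]$ by routing through the chain $(x_{j-1})^+\preceq h_j\preceq u$ and subtracting $r[(x_{j-1})^+,\beta(z)]$; the paper instead matches two three-term decompositions of the uniform rank $n=r[x_j,(x_j)^+]=r[(\alpha(z))^-,\alpha(z)]$, pairing $r[x_j,(x_{j-1})^+]$ with $r[(\alpha(z))^-,\beta(z)]$ and $r[h_j,(x_j)^+]$ with $r[\alpha(z)\wedge u,\alpha(z)]$. Your route is more elementary bookkeeping; the paper's makes the complemented-modular structure of the intervals $[w,(w)^+]$ do the work. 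One small point worth making explicit: the subtraction in your~(2) presupposes $\beta(z)\preceq\alpha(z)\wedge u$, which you do not state separately---but it follows at once from the same term-by-term bounds you give in~(1) with the $+e_i$ omitted.
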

\begin{proof}
	We first show $\beta(z) \preceq \alpha(z) \wedge u$.
	Obviously $\beta(z) \preceq \beta(s) = y_j \prec u$. 
	We show $\beta(z) \preceq \alpha(z)$.
	Since $(x_{j-1})^{+}$ is the join of atoms in $[x_j,(x_j)^+]$ 
	and $(\alpha(z))^-$ is a complement of $(x_{j-1})^{+}$ in $[x_j, \beta(z)]$ 
	(by Lemmas~\ref{lem:f_i^k} and \ref{lem:ray}), 
	$\beta(z)$ is also the join of atoms in $[\alpha(z)^-, \alpha(z)]$ (by Lemma~\ref{lem:isomorphism}~(1)), implying $\beta(z) \preceq \alpha(z)$ with $r[x_j, (x_{j-1})^+] = r[(\alpha(z))^-, \beta(z)]$.
	
	Here $u$ is a complement of $(x_j)^+$ in $[h_j, (y)^+]$.
	This implies that 
	$r[h_j,(x_j)^+] = r[\alpha(z) \wedge u, \alpha(z)]$ (via Lemma~\ref{lem:isomorphism}).
	Now $n = r[x_j,(x_j)^+] = r[x_j,(x_{j-1})^+] + r[(x_{j-1})^+,h_j]+ r[h_j,(x_j)^+]$ and 
	$n = r[(\alpha(z))^-,\alpha(z)] = r[(\alpha(z))^-, \beta(z)] +r[\beta(z),\alpha(z) \wedge u] +r[\alpha(z) \wedge u, \alpha(z)]$ (by (JD) and Lemma~\ref{lem:u-rank}).
	Hence 
	we have (2) $k = r[(x_{j-1})^+,h_j] = r[\beta(z), \alpha(z) \wedge u]$.
	Suppose $z_i < s_i$. Then 
	$\beta(z+e_i) := (x_{i-1})^{+} \vee (\alpha(z+e_i))^- = \beta(z) \vee (\alpha(z+e_i))^-$. 
	Here $\beta(z)$ is the join of atoms in $[(\alpha(z))^-,\alpha(z)]$ 
	(as seen above), and
	$(\alpha(z+e_i))^-$ is an atom of $[\alpha(z)^-,\alpha(z)]$.
	This means that $\beta(z+ e_i) \preceq \alpha(z)$.
	Thus we have (1). 	Now $\beta(z+{\bf 1}) = \alpha(z) \wedge u$ 
	for  $z \in [{\bf 0}, s-{\bf 1}]$ 
	follows from the combination of (2),  
	(rc) for $\beta(z+{\bf 1}) \preceq \alpha(z) \wedge u$ (by (1)), and Lemma~\ref{lem:ray}.
\end{proof}

By this claim, 
for each $i =1,2,\ldots,k$, 
$\beta({\bf 1} + (s_i-1)e_i)$ is covered by $a_i^{s_i} \wedge u = \alpha(s_i e_i) \wedge u$. 
Since $[b_i^{s_i}, a_i^{s_i}] \subseteq [(a_i^{s_i})^-, a_i^{s_i}]$
is complemented modular, 
we can choose a complement $b_i^{s_i+1}$ of 
$\beta({\bf 1} + (s_i-1)e_i)$ in 
$[b_i^{s_i}, a_i^{s_i} \wedge u]$, 
which covers $b_i^{s_i}$; see Figure~\ref{fig:fig2}.

We show that $(b_i^{l})_{l=0,1,\ldots, s_{i}+1}$ is a segment.
By Lemmas~\ref{lem:segment} and \ref{lem:f_i^k} 
for the opposite $\check{\cal L}$, 
we have that 
$(a_i^{s_i-2},a_i^{s_i-1},a_i^{s_i}) \wedge u$ is a segment.
By the above claim, we must have 
$(a_i^{s_i-2},a_i^{s_i-1},a_i^{s_i}) \wedge u = 
(\beta({\bf 1}+ (s_i - 2) s_i), \beta({\bf 1}+ (s_i - 1) s_i), a^{s_i}_i \wedge u)$.
Similarly, 
$(b_i^{s_i-1},b_i^{s_i},b_i^{s_i+1}) = ((\beta({\bf 1}+ (s_i - 2) e_i), \beta({\bf 1}+ (s_i - 1) e_i), \alpha(s_i e_i)) \wedge u)) \wedge b_i^{s_i+1}$ is a segment.  
This concludes that $(b_i^{l})_{l=0,1,\ldots, s_{i}+1}$ is a segment.

Now $\beta = (b_i^{l})_{i=1,2,\ldots,k,\, l=0,1,\ldots,s_i+1}$ 
is a partial $k$-frame at $(x_{j-1})^+$ 
such that 
$\beta({\bf 1}) = h_j$ and $\beta(s + {\bf 1}) = u$.
By Lemma~\ref{lem:f_i^k}, the above claim, $b_i^{s_i+1} \preceq u$, and (rc), 
we must have $\beta(z+{\bf 1}) = \alpha(z) \wedge u$ for all $z \in [{\bf 0},s]$, 
which in turn implies $\beta(z+{\bf 1}) \vee (x_j)^+ = \alpha(z)$ (by Lemma~\ref{lem:isomorphism}~(1)). 
Consider two short chains $C_{j} \vee y_{j}$ and $D_{j} \wedge y_j$ in $[y_{j},y_{j+1}]$. Choose a basis $c_1,c_2,\ldots,c_{i'}$ of 
$[y_{j}, y_{j+1}]$ that generates 
$C_{j} \vee y_{j}$ and $D_{j} \wedge y_{j+1}$.
By Lemma~\ref{lem:complement}, 
the sublattice 
$\langle c_1,c_2,\ldots,c_{i'}, \beta(s+e_1), \beta(s+e_2),\ldots, \beta(s+e_k) \rangle$
contains $D_j$.
Also $c_1 \wedge (x_j)^+, c_2 \wedge (x_j)^+, \ldots,c_{i'} \wedge (x_j)^+$
is a basis of $[h_j, (x_j)^+]$ (by Lemma~\ref{lem:isomorphism} (1)) that 
generates $C_{j} \vee h_j$ and $D_{j} \wedge (x_j)^+$.
Choose a complement $v$ of $h_j$ in 
$[(x_{j-1})^+, (x_j)^+]$
such that $\langle [(x_{j-1})^+, v] \cup [(x_{j-1})^+, h_j] \rangle$ 
contains $C_{j}$ (via Lemma~\ref{lem:complement}).
Here $c_1 \wedge v, c_2 \wedge v,\ldots, c_{i'} \wedge v$ 
is a basis of $[(x_{j-1})^+, v]$ (by Lemma~\ref{lem:isomorphism}~(1)).
Append segment $((x_{j-1})^+, c_i \wedge v)$ to $\beta$ for $i=1,2,\ldots,i'$.
Then we obtain a partial $(k+i')$-frame $\beta$ at $(x_{j-1})^+$ 
such that $\langle \beta \rangle$ contains $C_{j}$ and $D_{j}$ 
(since $(c_i \wedge v) \vee \beta(s) = c_i$), 
and $\beta({\bf 1}) = (x_j)^+$ and $\beta(s+{\bf 1}) = (y)^+$, as required.

For $j=0$, we obtain a partial $n$-frame $\alpha$ at $x$ that generates $C$ and $D$.
Extend each $x$-segment in $\alpha$ 
to an $x$-ray according to Lemma~\ref{lem:ray}.
Thus we obtain a frame $\alpha$ that generates $C$ and $D$, 
and obtain a $\ZZ^n$-skeleton $\bigcup_{k=0,1,2,\ldots} \langle \alpha \rangle^{-k}$ 
containing $C$ and $D$ (Lemma~\ref{lem:Z^n}). 
This completes the proof of Lemma~\ref{lem:twochains_e}.
\end{proof}

\subsection{Affine buildings of type A $\Rightarrow$ uniform modular lattices}\label{subsec:E=>U}

Let $\varDelta$ be an affine building of type A with dimension $n-1$.
We first introduce a special labeling analogous to a natural coloring in the spherical case.
Consider the subposet $\varLambda$ of $\ZZ^n$ defined by
\begin{equation}
\varLambda := \left\{ x \in \ZZ^n \bigmid 0 \leq \sum_{i=1}^{n} x_i \leq n-1 \right\}.
\end{equation}
For every point $x \in \ZZ^n$, 
there is a unique $x' \in \varLambda$ with $x - x' \in \ZZ{\bf 1}$.
Namely $\varLambda$ is the set of representatives of $\ZZ^n / \RR{\bf 1}$.
Consider the order complex ${\cal O}(\varLambda)$ of $\varLambda$.
Then one can observe that the affine Coxeter complex of type A is isomorphic 
to the subcomplex ${\cal O}'(\varLambda)$ of ${\cal O}(\varLambda)$  
consisting of chains $x_0 < x_1 < \cdots < x_m$
with $x_m < x_0 + {\bf 1}$.

A coloring 
$\ell: \varDelta^0 \to \{0,1,2,\ldots,n-1\}$ of $\varDelta$ is said to be {\em natural}
if for every apartment $\varSigma$
there is an isomorphism $\varrho: \varSigma \to {\cal O}'(\varLambda)$
with $\ell(x) = \sum_{i=1}^n \varrho(x)_i$ for all $x \in \varSigma^0$.
\begin{Lem}\label{lem:natural}
	A natural coloring exists.
\end{Lem}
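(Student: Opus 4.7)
The plan is to define the coloring on a single base apartment by the sum-of-components description, extend it using (col), and verify naturality on an arbitrary apartment through axiom (B3$^+$).

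First I fix an apartment $\varSigma_0$ of $\varDelta$ together with an isomorphism $\varrho_0 : \varSigma_0 \to {\cal O}'(\varLambda)$ guaranteed by (B1), and set $\ell_0(x) := \sum_{i=1}^n \varrho_0(x)_i$ for $x \in \varSigma_0^0$. To see that $\ell_0$ is a coloring of $\varSigma_0$, I note that a maximal simplex of ${\cal O}'(\varLambda)$ is a short chain $x_0 < x_1 < \cdots < x_{n-1}$ in $\varLambda$ whose consecutive vertices differ by a single unit vector, so the component sums form $n$ consecutive integers; combined with $x_j \in \varLambda$ and $x_{n-1} < x_0 + {\bf 1}$, this forces $\sum(x_0)_i = 0$ and the color palette of the simplex to be precisely $\{0,1,\ldots,n-1\}$. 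Hence $\ell_0$ separates vertices in every maximal simplex of $\varSigma_0$, and by (col) extends uniquely to a coloring $\ell$ on all of $\varDelta^0$.

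To verify that $\ell$ is natural, I take an arbitrary apartment $\varSigma$ and choose maximal simplices $A \in \varSigma$ and $A_0 \in \varSigma_0$. By (B2) there is an apartment $\varSigma'$ containing both $A$ and $A_0$. Applying (B3$^+$) to the pair $(\varSigma, \varSigma')$ sharing $A$, and to $(\varSigma', \varSigma_0)$ sharing $A_0$, I obtain color-preserving isomorphisms $\varphi_1 : \varSigma \to \varSigma'$ and $\varphi_2 : \varSigma' \to \varSigma_0$ pointwise fixing $A$ and $A_0$, respectively. Setting $\varrho := \varrho_0 \circ \varphi_2 \circ \varphi_1 : \varSigma \to {\cal O}'(\varLambda)$ gives an isomorphism satisfying, for every $x \in \varSigma^0$,
\[
\sum_{i=1}^n \varrho(x)_i = \ell_0(\varphi_2(\varphi_1(x))) = \ell(\varphi_2(\varphi_1(x))) = \ell(x),
\]
where the last equality uses the color-preservation of $\varphi_1$ and $\varphi_2$. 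This is exactly the naturality condition.

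The main technical content is packed into the observation that $\ell_0$ realises the full palette $\{0,1,\ldots,n-1\}$ on every maximal simplex of $\varSigma_0$, so that its extension via (col) takes the correct set of values on the whole of $\varDelta$; once this is in place, the rest is a formal composition of axioms. A hands-on alternative consistent with the counting hint indicated in the sketched spherical proof would bypass (B3$^+$): for any isomorphism $\psi : \varSigma \to {\cal O}'(\varLambda)$, the two colorings $\ell|_\varSigma$ and $\sum \psi_i$ are related by some permutation $\kappa$ of $\{0,\ldots,n-1\}$, and comparing the numbers of neighbors of each color around a vertex should force $\kappa$ to be a cyclic shift, which can then be absorbed by post-composing $\psi$ with a translation $y \mapsto y + v$ of ${\cal O}'(\varLambda)$ (shifting colors by $\sum v_i \bmod n$). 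Via (B3$^+$) the direct route above is cleaner, so I expect to follow it.
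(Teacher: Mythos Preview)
Your main argument is correct and is in fact cleaner than the paper's. Both proofs start identically: pick a base apartment $\varSigma_0$, define $\ell_0$ by the component sum through a fixed isomorphism $\varrho_0$, and extend via (col). The divergence is in verifying naturality on an arbitrary apartment $\varSigma$. You transport $\varrho_0$ directly: use (B2) to get an intermediate apartment $\varSigma'$ sharing a maximal simplex with each of $\varSigma$ and $\varSigma_0$, then invoke (B3$^+$) twice to get color-preserving isomorphisms $\varphi_1,\varphi_2$ and set $\varrho=\varrho_0\circ\varphi_2\circ\varphi_1$. The paper instead starts from an \emph{arbitrary} isomorphism $\varrho':\varSigma'\to{\cal O}'(\varLambda)$ and shows it can be corrected: it uses the color-preserving isomorphism only to transfer the neighbor-count identity ``each vertex has $\binom{n}{|k-\ell(x)|}$ neighbors of color $k$'' to $\varSigma'$, and then argues from this count that the bijection $\sigma$ relating $\ell$ and $\sum_i\varrho'_i$ must be the identity after precomposing $\varrho'$ with a suitable $\kappa^{\pm}_z$. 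Your route avoids this combinatorics entirely; the paper's route proves the slightly stronger (but here unneeded) statement that \emph{every} isomorphism to ${\cal O}'(\varLambda)$ is a natural one up to a $\kappa^{\pm}_z$.

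One small remark on your closing paragraph: the neighbor-count argument only pins down $|\sigma(k)-\sigma(l)|$ modulo the symmetry $m\leftrightarrow n-m$, so $\sigma$ is forced to be a cyclic shift \emph{or a reflection}, not just a cyclic shift; this is why the paper introduces both signs in $\kappa^{\pm}_z$. Since this is only your sketch of the alternative route and not your actual proof, it does not affect correctness.
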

From the group-theoretic view, 
a natural coloring corresponds to 
the ordering of generators of 
the affine Coxeter group of type A so 
that consecutive numbers are assigned to adjacent generators 
in the Dynkin diagram of type $\tilde A_n$ (that is a cycle). 

\begin{proof}
	Before constructing a natural coloring, 
	we note one remark on automorphisms on ${\cal O}(\varLambda)$.
	For $z \in \ZZ^n$, 
	define $\kappa^{\pm}_z: \varLambda \to \varLambda$ 
	so that $\kappa^{\pm}_z(x)$ is the unique point in $\varLambda$ with
	$\pm x + z - \kappa^{\pm}_z(x) \in \RR {\bf 1}$.
	Then $\kappa^{\pm}_z$ induces an automorphism on ${\cal O}'(\varLambda)$ 
	with $\sum_i \kappa^\pm_z(x)_i =  \sum_i (\pm x_i +  z_i) \mod n$.
	
	Let us start the proof of this lemma.
	Consider an arbitrary apartment $\varSigma$.
	Choose an arbitrary isomorphism $\varrho: \varSigma \to {\cal O}'(\varLambda)$.
	Define the color $\ell(x)$ of 
	$x \in \varSigma^0$ by $\ell(x) := \sum_{i=1}^n \varrho(x)_i \in \{0,1,\ldots,n-1\}$.
	This is a coloring of $\varSigma$.
	Extend this coloring to the whole 
	coloring $\ell: \varDelta^0 \to \{0,1,\ldots,n-1\}$; see (col) in Section~\ref{subsec:building}.
	We verify that $\ell$ is indeed natural. 
    Here we observe:
	\begin{itemize}
		\item[($*$)] In $\varSigma$, every vertex $x$ is 
		adjacent to ${n \choose |k - \ell(x)|}$ vertices of color $k \neq \ell(x)$.
	\end{itemize}
	Consider another apartment $\varSigma'$.
	Suppose first that $\varSigma$ and $\varSigma'$ 
	have a common maximal simplex $A$.
	Choose an isomorphism $\varSigma \to \varSigma'$ fixing $A$ via (B3).
	This isomorphism is taken to be color-preserving by (B3$^+$). 
	Therefore
	the property ($*$) holds in $\varSigma'$.
	Consider an isomorphism 
	$\varrho':\varSigma' \to {\cal O}'(\varLambda)$.
	Now $\ell': {\varSigma'}^0 \to \{0,1,\ldots,n-1\}$ 
	defined by $x \mapsto \sum_{i=1}^n \varrho'(x)_i$ is a coloring on $\varSigma'$.
	By replacing $\ell'$ by $\kappa^{\pm}_v \circ \ell'$ if necessarily, 
	we can assume that 
	$\ell'(x) = 0$ for the vertex $x \in A$ with $\ell (x) = 0$ and 
	$\ell'(y) \neq n-1$ for the vertex $y \in A$ with $\ell(y) = 1$.
	In $\varSigma'$, 
	each vertex $x$ must satisfy ($*$) for $\ell'$. 
	By the uniqueness of coloring on $\varSigma'$, 
	there is a bijection $\sigma$ on $\{0,1,\ldots,n-1\}$ 
	such that $\ell'  = \sigma \circ \ell$.
	Then it must hold $\sigma(0) = 0$ and $\sigma(1) \neq n-1$.
	All vertices in $A$ satisfy ($*$) for $\ell$ and for $\ell'$.
	Therefore ${n \choose |k - l|} = {n \choose |\sigma(k) - \sigma(l)|}$ 
	must hold for $k \neq l$.
	For $|k-l| = 1$ or $n-1$, it holds $n ={n \choose |\sigma(k) - \sigma(l)|}$, 
	which implies $|\sigma(k) - \sigma(l)| = 1$ or $n-1$.
	By $\sigma(0) = 0$ and $\sigma(1) \neq n-1$, 
	it holds $\sigma(1) = 1$, consequently, $\sigma(2) = 2$, $\sigma(3) = 3,\ldots$.
	Thus $\sigma$ is the identity, and $\varrho'$ is a desired isomorphism.

	Next suppose that $\varSigma'$ is arbitrary.
	By (B3), 
	there is an apartment $\varSigma''$ containing 
	a maximal simplex in $\varSigma$ and a maximal simplex $B$ in $\varSigma''$.
	Apply the above argument with replacing $\varSigma'$ 
	by $\varSigma''$ and $A$ by $B$.
	Then we obtain a desired isomorphism from $\varSigma'$ to 
	${\cal O}'(\varLambda)$.
\end{proof}

Fix a natural coloring $\ell$.
We construct a uniform modular lattice from $\varDelta$.
Our construction generalizes that in Example~\ref{ex:tree}.
Consider a directed graph $G(\varDelta)$ on vertex set $\varDelta^0 \times \ZZ$, 
where two distinct vertices $(x,k)$ and $(x',k')$ 
have an edge from $(x',k')$ to $(x,k)$, denoted by $(x',k') \to (x,k)$, if
$x$ and $x'$ belong to a common simplex in $\varDelta$ 
and $\ell(x') + k'n = \ell(x) + kn + 1$ 
($\Leftrightarrow$ either $k'= k$ and $\ell(x') = \ell(x) +1$ 
or $k' = k+1$ and $\ell(x) = \ell(x') + n-1$).
The graph $G(\varDelta)$ is acyclic,  
since $(x,k) \mapsto \ell(x) + n k$ 
is monotone decreasing on any directed path.
Define a partial order $\preceq$ on $\varDelta^0 \times \ZZ$ by
$(x,k) \preceq (x',k')$ if there is a directed path on 
$G(\varDelta)$ from $(x',k')$ to $(x,k)$.
The resulting poset on $\varDelta^0 \times \ZZ$ satisfies (F), 
and is denoted by ${\cal L}(\varDelta)$.
\begin{Thm}\label{thm:main2}
Let $\varDelta$ be an affine building of type A.	
Then ${\cal L}(\varDelta)$ is a uniform modular lattice, 
where $\varDelta$ is equal to ${\cal C}({\cal L}(\varDelta))$. 
\end{Thm}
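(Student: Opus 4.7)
The plan is to mirror the construction at the end of Section~\ref{subsec:building} (the spherical case), with the natural coloring $\ell$ playing the same role but now unfolded along the extra $\ZZ$-coordinate. The essential observation is that, given an apartment $\varSigma$ with isomorphism $\varrho : \varSigma \to {\cal O}'(\varLambda)$ from naturality, the assignment $\tilde\varrho : (x,k) \mapsto \varrho(x) + k{\bf 1}$ is a bijection $\varSigma^0 \times \ZZ \to \ZZ^n$ (each integer vector has a unique representative in $\varLambda$), and the cover relations of $G(\varDelta)$ restricted to $\varSigma^0 \times \ZZ$ become exactly the cover relations of $\ZZ^n$ (both correspond to adding a single $e_i$). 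Thus the subposet of ${\cal L}(\varDelta)$ carried by any apartment is a copy of the uniform modular lattice $\ZZ^n$ of Example~\ref{ex:Z^n}.

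The argument then proceeds in four stages. First, $\preceq$ is a partial order: reflexivity and transitivity come from the definition as transitive closure, and antisymmetry from the fact that $v(x,k) := \ell(x) + nk$ is strictly monotone along every edge of $G(\varDelta)$, so $G(\varDelta)$ is acyclic. Second, meets and joins exist apartment-locally: for $p = (x,k)$ and $q = (y,l)$ I apply (B2) to the singletons $\{x\}, \{y\}$ to obtain an apartment $\varSigma$ containing both, and set $p \wedge q, p \vee q$ to be the $\tilde\varrho$-preimages of $\min(\tilde\varrho(p), \tilde\varrho(q))$ and $\max(\tilde\varrho(p), \tilde\varrho(q))$ in $\ZZ^n$. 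Third, modularity follows from Lemma~\ref{lem:valuation} because $v$ coincides under $\tilde\varrho$ with the coordinate sum on $\ZZ^n$, yielding $v(p) + v(q) = v(p \wedge q) + v(p \vee q)$. Fourth, the ascending operator is $(x,k) \mapsto (x,k+1)$, since this is exactly $x \mapsto x + {\bf 1}$ in each apartment copy of $\ZZ^n$, and the map is manifestly an automorphism of $\varDelta^0 \times \ZZ$, establishing uniformity. The identification $\varDelta = {\cal C}({\cal L}(\varDelta))$ is then essentially automatic: the equivalence $(x,k) \sim (x,k+1)$ collapses $\varDelta^0 \times \ZZ$ to $\varDelta^0$, and under any apartment identification a short chain corresponds to a chain in some interval $[w, w + {\bf 1}] \subseteq \ZZ^n$, equivalently to a simplex of $\varDelta$.

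The main obstacle will be Step 2, specifically showing that the apartment-local meet and join are \emph{absolute} --- independent of the chosen apartment, and uncontested by elements outside of it. The clean tool, as in the corresponding last paragraph of the spherical proof in the excerpt, is the canonical retraction $\rho_{\varSigma,A}$ of (ret): it is color-preserving by (B3$^+$), hence preserves the valuation $v$, so if a putative common lower bound $r$ of $p,q$ exceeded the apartment-local minimum, its retraction to $\varSigma$ would still exceed this minimum, contradicting that the $\ZZ^n$-min realizes the meet in the apartment copy. The rank-comparison principle (rc) then pins everything down. I expect the bookkeeping here, rather than any conceptual novelty, to be the bulk of the work, and to follow the spherical-case argument very closely, with the flexibility in the choice of $\varrho$ granted by Lemma~\ref{lem:natural} making the apartment-to-apartment comparisons straightforward.
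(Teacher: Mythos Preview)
Your four-stage plan mirrors the paper's proof closely, and Stages 1, 3, 4 are essentially what the paper does. The gap is in your treatment of the ``main obstacle'' in Stage~2.

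The canonical retraction $\bar\rho_\varSigma:(x,k)\mapsto(\rho_{\varSigma,A}(x),k)$ is order-preserving and color-preserving, so for a common lower bound $r$ of $p,q$ you do get $\bar\rho_\varSigma(r)\preceq p$, $\bar\rho_\varSigma(r)\preceq q$ in ${\cal L}(\varSigma)\simeq\ZZ^n$, hence $\bar\rho_\varSigma(r)\leq\min(p,q)$ and $v(r)=v(\bar\rho_\varSigma(r))\leq v(p\wedge_\varSigma q)$. But this constrains only the \emph{valuation} of $r$, not the order relation $r\preceq p\wedge_\varSigma q$ itself; (rc) cannot close this gap because you have no inequality between $r$ and $p\wedge_\varSigma q$ to begin with. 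Your appeal to ``the corresponding last paragraph of the spherical proof'' does not transfer: in the spherical case $z\preceq x$ means by definition that $\{z,x\}$ is a simplex, so one invokes (B2) for the pair of simplices $\{x,z\},\{y,z'\}$ directly. In the affine case $r\preceq p$ only means there is a directed path in $G(\varDelta)$, and there is no reason the underlying vertices form a simplex.

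The paper fills this gap with two ingredients you have not mentioned. First, a geometric lemma (Lemma~\ref{lem:every}) using the CAT(0) metric and uniqueness of geodesics (geo): it shows that the vertices $\min(p,q)+\RR{\bf 1}$ and $\max(p,q)+\RR{\bf 1}$ lie in \emph{every} apartment containing $p+\RR{\bf 1}$ and $q+\RR{\bf 1}$, by tracking the round-up/round-down of points along the straight-line geodesic. With this in hand, the retractions in both directions give $p\wedge_\varSigma q=p\wedge_{\varSigma'}q$ for any two such apartments. Second, universality ($r\preceq p\wedge q$ for every common lower bound $r$) is proved by induction on the length of a directed path from $p$ to $r$: if $p'$ is the vertex just below $p$ on this path, then $\{p,p'\}$ is a simplex, so (B2) yields an apartment containing $p,p',q$ simultaneously; apartment-independence then gives $p\wedge q\succeq p'\wedge q\succeq r$. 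Neither step is accessible from retraction and rank-comparison alone.
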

The rest of this section is devoted to proving this theorem. 
%We start with studying apartments.
%
Now each apartment $\varSigma$ can be regarded as ${\cal O}'(\varLambda)$, 
where $\ell(x) = \sum_{i=1}^n x_i$ for $x \in \varSigma^0 = \varLambda$.
Let ${\cal L}(\varSigma)$
denote the subposet of ${\cal L}(\varDelta)$ consisting 
of $(x,k) \in \varSigma^0 \times \ZZ$.
Consider the canonical retraction 
$\rho_{\varSigma} = \rho_{\varSigma,A}: \varDelta \to \varSigma$ (for some $A \in \varSigma$); see (ret) in Section~\ref{subsec:building}.
From this,  
we define an order-preserving retraction $\bar \rho_{\varSigma} :{\cal L}(\varDelta) \to {\cal L}(\varSigma)$ 
by $(x,k) \mapsto (\rho_{\varSigma}(x),k)$.
Under identification $\varSigma^0 = \varLambda \subseteq \ZZ^n$, we have:
\begin{Lem}\label{lem:apart}
	${\cal L}(\varSigma)$ is isomorphic to $\ZZ^n$ by $(x,k) \mapsto x + k{\bf 1}$.
\end{Lem}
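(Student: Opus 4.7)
The plan is to show that the map $\phi(x,k) := x + k{\bf 1}$ is both a bijection and an order isomorphism from ${\cal L}(\varSigma)$ onto $\ZZ^n$. Bijectivity is immediate: since $\varLambda$ is a set of representatives of $\ZZ^n / \ZZ{\bf 1}$, every $y \in \ZZ^n$ admits a unique decomposition $y = x + k{\bf 1}$ with $x \in \varLambda$ and $k \in \ZZ$.

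Next I would identify the edges of $G(\varDelta)$ incident to pairs in $\varSigma^0 \times \ZZ = \varLambda \times \ZZ$ with the covering relations of $\ZZ^n$ under $\phi$. Using that the coloring is natural, so that $\ell(x) = \sum_i x_i$ on $\varLambda$, and that $\ell$ takes values in $\{0,1,\ldots,n-1\}$, the defining identity $\ell(x') + k'n = \ell(x) + kn + 1$ forces $k' \in \{k, k+1\}$. A short case analysis yields: if $k' = k$ then $\sum(x' - x) = 1$, and the common-simplex condition restricts $x' - x$ to a $0$--$1$ vector, forcing $x' - x = e_i$; if $k' = k+1$ then $\sum(x - x') = n-1$ with $x \succ x'$, forcing $x - x' = {\bf 1} - e_i$ for some $i$. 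In either case $\phi(x',k') - \phi(x,k) = e_i$, which is a covering in $\ZZ^n$. The reverse direction follows by running the computation backwards: given $y \gtrdot y'$ in $\ZZ^n$, write $y = x' + k'{\bf 1}$ and $y' = x + k{\bf 1}$ uniquely and verify the edge condition.

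To reduce ``common simplex in $\varDelta$'' (as it appears in the definition of $G(\varDelta)$) to ``common simplex in $\varSigma$'' for vertices of $\varSigma^0$, I would invoke the canonical retraction $\rho_\varSigma$ from (ret): it is color-preserving by (B3$^+$), simplicial, and fixes $\varSigma$ pointwise. Hence if $x, x' \in \varSigma^0$ share a simplex $S$ of $\varDelta$, they share the simplex $\rho_\varSigma(S)$ of $\varSigma$, where the needed inequality $x' \prec x + {\bf 1}$ (or the symmetric version) then follows from the description of simplices of ${\cal O}'(\varLambda)$. The same retraction argument promotes the edge-level correspondence to the full order isomorphism: for any directed path in $G(\varDelta)$ between elements of $\varLambda \times \ZZ$, applying $\bar\rho_\varSigma$ vertex-by-vertex yields a directed path entirely within $\varLambda \times \ZZ$ with the same endpoints, since color-preservation keeps the defining identity intact edge by edge. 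Thus the partial order on ${\cal L}(\varSigma)$ is exactly the transitive closure of the edges inside $\varLambda \times \ZZ$, which matches $\leq$ on $\ZZ^n$ under $\phi$.

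The main obstacle I anticipate is not the bijection or the case analysis but the careful bookkeeping around the canonical retraction---both using it to identify ``common simplex of $\varDelta$'' with ``common simplex of $\varSigma$'' for $\varSigma^0$-vertices, and using it to reroute $\varDelta$-level paths into $\varSigma$-level ones. Once those two reductions are in hand, everything else is a direct componentwise computation in $\varLambda$ built from the natural coloring.
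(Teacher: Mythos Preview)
Your proposal is correct and follows essentially the same route as the paper: bijectivity from $\varLambda$ being a transversal of $\ZZ^n/\ZZ{\bf 1}$, the forward implication $p \leq q \Rightarrow p \preceq q$ from the edge-level identification, and the reverse implication by pushing a $G(\varDelta)$-path through the retraction $\bar\rho_\varSigma$. The paper's proof is terser---it asserts in one sentence that a Hasse path in $\ZZ^n$ is a directed path in $G(\varDelta)$ without your case split on $k' \in \{k,k+1\}$---but the underlying argument is the same, and your explicit use of the color-preserving retraction to guarantee that retracted edges remain edges (distinct endpoints, same color identity) is exactly the point the paper is relying on implicitly.
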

\begin{proof}
	From ${\cal L}(\varSigma) = \varSigma^0 \times \ZZ = \varLambda \times \ZZ$, 
	we can easily see that the map is a bijection.
	In particular, we can identify ${\cal L}(\varSigma)$ with $\ZZ^n$.
	We need to show that  
	$p \preceq q$ if and only if $p \leq q$ in ${\cal L}(\varSigma) = \ZZ^n$.
	If $p \leq q$, 
	then there is a directed path from $q$ to $p$ in 
	the Hasse diagram of $\ZZ^n$; it is a directed path in $G(\varDelta)$, 
	implying $p \preceq q$.
	Suppose that $p \preceq q$.
	There is a directed path $P$ from $q$ to $p$ in $G(\varDelta)$.
	Then the image of $P$ by retraction $\bar \rho_{\varSigma}:{\cal L}(\varDelta) \to {\cal L}(\varSigma)$ is a directed path from 
	$q$ to $p$ in ${\cal L}(\varSigma) = \ZZ^n$.
	This means that $p \leq q$, as required.
\end{proof}

As mentioned in Section~\ref{subsec:building}, 
the geometric realization $|\varSigma|$ of an apartment $\varSigma$
is naturally regarded as $\RR^n / \RR {\bf 1}$ 
with $\varSigma^0 = \ZZ^n / \RR {\bf 1}$.
The next lemma is crucial for showing 
the existence of the meet and join in ${\cal L}(\varDelta)$.
\begin{Lem}\label{lem:every}
	For $x,y \in \ZZ^n = {\cal L}(\varSigma)$, let 
	$(\overline{x}^k)_k$ and $(\underline{x}^k)_k$ be sequences of points in $\ZZ^n$ 
	defined by
	\begin{eqnarray*}
	\overline{x}^{k} &:= &\overline{x}^{k-1} + \sum \{ e_i \mid i: y_i > \overline{x}^{k-1}_i \}, \\
	\underline{x}^{k} &:= &\underline{x}^{k-1} - \sum \{ e_i \mid i: y_i < \underline{x}^{k-1}_i \} \quad (k=1,2,\ldots), 
	\end{eqnarray*}
	where $\overline{x}^0 = \underline{x}^0 := x$, and 
	$\overline{x}^k = \max (x,y)$ and $\underline{x}^k = \min (x,y)$ for large $k$.
	Then the sequences $(\overline{x}^k + \RR{\bf 1})_k$ and $(\underline{x}^k+ \RR{\bf 1})_k$ of vertices in $\varSigma^0$
	belong to every apartment containing $x + \RR{\bf 1}$ and $y+ \RR{\bf 1}$.
\end{Lem}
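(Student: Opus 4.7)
The plan is to invoke the CAT(0) geometry of $|\varDelta|$, encoded in property (geo). Both $|\varSigma|$ and $|\varSigma'|$ are convex Euclidean subsets of the CAT(0) space $|\varDelta|$, so $|\varSigma|\cap|\varSigma'|$ is itself convex, and consequently the unique geodesic in $|\varDelta|$ from $x+\RR{\bf 1}$ to $y+\RR{\bf 1}$ lies entirely in $|\varSigma|\cap|\varSigma'|$. Under the identification $|\varSigma|\cong\RR^n/\RR{\bf 1}$ provided by the natural coloring of the affine Coxeter complex, this geodesic is the straight-line segment $\gamma(t)=((1-t)x+ty)+\RR{\bf 1}$, $t\in[0,1]$.

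The main task is then to exhibit, for each $k\ge 1$, a maximal simplex $T_k$ of $\varSigma$ that (i) has $\overline{x}^{k}+\RR{\bf 1}$ among its vertices and (ii) has its relative interior meet $\gamma$. Given such a $T_k$, any interior point $p$ of $T_k\cap\gamma$ lies in $|\varSigma'|$. Since each point of $|\varDelta|$ belongs to a unique open simplex, and the open simplex of $\varSigma$ containing $p$ is the relative interior of $T_k$, the simplex $T_k$ must also be a simplex of $\varSigma'$; hence every vertex of $T_k$, and in particular $\overline{x}^{k}+\RR{\bf 1}$, belongs to ${\varSigma'}^0$.

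To produce $T_k$, recall that $\overline{x}^{k}=\overline{x}^{k-1}+e_{I}$ for $I:=\{i:y_i>\overline{x}^{k-1}_i\}$. I would choose a maximal chain $\overline{x}^{k-1}<\overline{x}^{k-1}+e_{\sigma(1)}<\cdots<\overline{x}^{k-1}+{\bf 1}$ (mod $\RR{\bf 1}$) with $\{\sigma(1),\dots,\sigma(|I|)\}=I$, ordering the indices in $I$ by decreasing $(y-\overline{x}^{k-1})_{\sigma(j)}$ and those outside $I$ by increasing $(y-\overline{x}^{k-1})_{\sigma(j)}$. This permutation records the order in which $\gamma$ crosses the walls $\{z_i=z_j+c\}$ of the affine Coxeter arrangement during the $t$-interval where the geodesic passes through the simplices above $\overline{x}^{k-1}$, and a barycentric-coordinate computation then confirms that the resulting $T_k$ has its relative interior met by $\gamma$. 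The sequence $(\underline{x}^{k}+\RR{\bf 1})_k$ is handled symmetrically, by working in the opposite lattice and in simplices lying below $\overline{x}^{k-1}$ along $\gamma$.

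The main obstacle is this final geometric-combinatorial step: precisely matching the straight-line traversal of the Coxeter arrangement by $\gamma$ to an explicit permutation $\sigma$ so that the resulting $T_k$ simultaneously contains $\overline{x}^{k}+\RR{\bf 1}$ and is crossed by $\gamma$. Degenerate configurations (multiple hyperplanes crossed simultaneously, or $\gamma$ lying on a lower-dimensional wall) require perturbing the choice of $\sigma$, but in all cases a valid choice exists and recovers both (i) and (ii).
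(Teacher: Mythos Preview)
Your overall strategy matches the paper's: exploit the uniqueness of geodesics in the CAT(0) space $|\varDelta|$ so that every simplex whose relative interior meets the straight segment $\gamma$ must lie in every apartment containing the endpoints, and then exhibit $\overline{x}^k + \RR{\bf 1}$ as a vertex of such a simplex. Where you diverge is in the execution, and this is where a gap appears.

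You try to locate each $\overline{x}^k$ directly along the geodesic from $x$ to $y$, by building a maximal simplex $T_k$ with bottom vertex $\overline{x}^{k-1}$ and a permutation $\sigma$ read off from the magnitudes $(y-\overline{x}^{k-1})_i$. But your rule for $\sigma$ need not produce a simplex actually crossed by $\gamma$. Take $n=3$, $x=(0,0,0)$, $y=(5,1,0)$: then $\overline{x}^2=(2,1,0)$, $I=\{1\}$, and $(y-\overline{x}^2)=(3,0,0)$ leaves the order of the two non-$I$ indices undetermined. The choice $\sigma=(1,2,3)$ gives $T_3$ with vertices $(2,1,0),(3,1,0),(3,2,0),(3,2,1)$; writing $\gamma(t)+c{\bf 1}=(5t+c,\,t+c,\,c)$ and imposing the simplex inequality $w_{\sigma(2)}\ge w_{\sigma(3)}$ forces $t\ge 1$, so this $T_3$ is never crossed for $t\in[0,1]$. (The alternative $\sigma=(1,3,2)$ happens to work here, but your recipe does not select it.) More fundamentally, $\overline{x}^{k-1}$ is in general not on $\gamma$, so there is no a~priori reason the simplices you build around it are the ones $\gamma$ traverses; the ``barycentric-coordinate computation'' you defer is exactly the missing content.

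The paper avoids all of this with two simplifications. First, a short claim: for any $u\in\RR^n$, the simplex of $|\varSigma|$ containing $u+\RR{\bf 1}$ in its relative interior has both $\lfloor u\rfloor+\RR{\bf 1}$ and $\lceil u\rceil+\RR{\bf 1}$ among its vertices (expand $u-\lfloor u\rfloor$ over the chain of level sets of its fractional parts). Second, the argument is \emph{inductive}, using at step $k$ the geodesic from $\overline{x}^k$ (not from $x$) to $y$: for small $\epsilon>0$ one has directly $\lceil \overline{x}^k+\epsilon(y-\overline{x}^k)\rceil=\overline{x}^{k+1}$, so by the claim $\overline{x}^{k+1}+\RR{\bf 1}$ is a vertex of the simplex met at that point, hence lies in every apartment containing $\overline{x}^k+\RR{\bf 1}$ and $y+\RR{\bf 1}$, and by induction in every apartment containing $x+\RR{\bf 1}$ and $y+\RR{\bf 1}$. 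This replaces your permutation bookkeeping and degeneracy analysis by a single ceiling identity.
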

\begin{proof}
We first show a property of 
the triangulation $\varSigma$ of $\RR^n / \RR {\bf 1}$.
\begin{Clm}
	For a vector $u \in \RR^n$, 
	the simplex of $|\varSigma|$ containing $u+ \RR {\bf 1}$ in its relative interior
	contains $\lceil u \rceil + \RR{\bf 1}$ and $\lfloor u \rfloor + \RR{\bf 1}$ as vertices.
\end{Clm}
Here $\lceil u \rceil$ (resp. $\lfloor u \rfloor$) is 
the integral vector obtained from $u$ by rounding up (resp. down) the fractional part of each component of $u$.
\begin{proof}
	Consider  
	the unique expression 
	$u - \lfloor u \rfloor = \sum_{k=1}^{n} \lambda_k \sum_{i \in X_k} e_i$
	for $\emptyset \neq X_1 \subset X_2 \subset \cdots \subset X_{n-1} \subset X_n = \{1,2,\ldots,n\}$ and $\lambda_i \geq 0$ with $\sum_{i=1}^{n} \lambda_i < 1$.	
	From this, we see that $u$ is a convex combination of $\lfloor u \rfloor$
	and $\lfloor u \rfloor + \sum_{i \in X_k} e_i$ for $k$ with $\lambda_k > 0$, 
	which forms the simplex in $|\varSigma|$ containing $u$ as its relative interior.
	Notice that $\lfloor u \rfloor + \sum_{i \in X_k} e_i$ for largest $k$ with $\lambda_k > 0$
	is equal to $\lceil u \rceil$. 
	%(Note that $\lceil u \rceil = \lfloor u \rfloor + {\bf 1}$ holds if $u$ has no integer component.) 
	This proves the claim.
\end{proof}
We prove the lemma. We use the CAT(0)-metrization of $|\varDelta|$; see Section~\ref{subsec:building}. 
The unique geodesic between $\overline{x}^k + \RR{\bf 1}$ 
and $y + \RR{\bf 1}$ in $|\varDelta|$ is given by $t \mapsto  (1-t) \overline{x}^k + t y + \RR{\bf 1} \in \RR^n / \RR{\bf 1} = |\varSigma|$.
By the uniqueness (geo) of the geodesic, 
every simplex meeting 
the geodesic in its relative interior
must belong to every apartment containing 
$\overline{x}^k+ \RR{\bf 1}$ and $y + \RR{\bf 1}$.
Notice that
for small $\epsilon > 0$, 
the point $\lceil \overline{x}^k + \epsilon (y - \overline{x}^k) \rceil$ is equal to $\overline{x}^{k+1}$.
By the claim, $\overline{x}^{k+1} + \RR{\bf 1}$ is a vertex of a simplex 
with which the geodesic meets in its relative interior.
This means that $\overline{x}^{k+1} + \RR{\bf 1}$ belongs to every apartment containing 
$\overline{x}^k+ \RR{\bf 1}$ and $y+ \RR{\bf 1}$.
Consequently, the whole sequence $(\overline{x}^k + \RR{\bf 1})$
belongs to every apartment containing $x+ \RR{\bf 1}$ and $y+ \RR{\bf 1}$.
The statement for $\underline{x}^k$ is shown by replacing $\overline{x}^k$ 
and $\lceil \cdot \rceil$ with  $\underline{x}^k$ and $\lceil \cdot \rceil$, respectively.
\end{proof}

\begin{proof}[Proof of Theorem~\ref{thm:main2}]
We first show that ${\cal L}(\varDelta)$ is a lattice.
Consider any two elements $(x,k)$ and $(y,l)$ of ${\cal L}(\varDelta)$. 
By (B2), 
there is an apartment $\varSigma$ containing $x$ and $y$.
By Lemma~\ref{lem:apart}, 
$(x,k)$ and $(y,l)$ are regarded as integer vectors 
$p = x + k{\bf 1}$ and $q = y + l{\bf 1}$, respectively.
Thus, in ${\cal L}(\varDelta) = \ZZ^n$, we can consider the meet 
$p \wedge_{\varSigma} q := \min (p,q)$ and the join 
$p \vee_{\varSigma} q := \max (p,q)$.
We show that $p \wedge_{\varSigma} q$ and $p \vee_{\varSigma} q$
are independent of the choice of the apartment $\varSigma$.
Consider another apartment $\varSigma'$ containing 
$p+ \RR{\bf 1}$ and $q + \RR{\bf 1}$.
By Lemma~\ref{lem:every}, 
$\varSigma'$ contains $p \vee_\varSigma q + \RR{\bf 1}$, and
hence ${\cal L}(\varSigma')$ contains $p \vee_\varSigma q$.
Conversely, ${\cal L}(\varSigma)$ contains $p \vee_{\varSigma'} q$.
Consider the order-preserving retraction 
$\bar \rho_{\varSigma}$. 
We have $p \vee_{\varSigma} q \preceq \bar \rho_{\varSigma}(p \vee_{\varSigma'} q) = p \vee_{\varSigma'} q$.
Also, by considering $\bar \rho_{\varSigma'}$, we have $p \vee_{\varSigma} q \preceq p \vee_{\varSigma'} q$.
Thus $p \vee_{\varSigma} q = p \vee_{\varSigma'} q$, and 
operator $\vee := \vee_{\varSigma}$ is independent of an apartment.
Similarly, $\wedge := \wedge_{\varSigma}$ 
is well-defined.
We show that $p \wedge q$ indeed equals the meet of $p$ and $q$.
Consider any common lower bound $u$ of $p$ and $q$.
We prove $p \wedge q \succeq u$ by the induction 
on the (minimum) length $k$ of a directed path $P$ from $p$ to $u$.
In the case of $k=0$, we have $u = p = p \wedge q$.
Suppose that $k> 0$. Consider 
the next element $p'$ following $p$ in $P$. 
Here $u$ is also a common lower bound of $p'$ and $q$.
By induction $p' \wedge q \succeq u$.
Also $p + \RR{\bf 1}$ and $p'+ \RR{\bf 1}$ belong to a common simplex.
There is an apartment $\varSigma$ such that ${\cal L}(\varSigma)$ 
contains $p,p'$ and $q$.
Then ${\cal L}(\varSigma) (= \ZZ^n)$ also contains $p \wedge q$ and $p' \wedge q$ 
with $p \wedge q \succeq p' \wedge q$, which implies $p \wedge q \succeq u$, as required.
By the same argument, $p \vee q$ is the join of $p$ and~$q$.

We show that ${\cal L}(\varDelta)$ is a modular lattice. 
Define $v:{\cal L}(\varDelta) \to \ZZ$ by $(x,k) \mapsto \ell(x) + kn$.
Then $(x,k) \prec (x',k')$ implies $v(x,k) < v(x',k')$.
For any apartment $\varSigma$, 
sublattice ${\cal L}(\varSigma)$ is identified with $\ZZ^n$, as shown above.
In this identification, any element $(x,k) \in {\cal L}(\varSigma)$ is 
regarded as $p = x + k{\bf 1} \in \ZZ^n$. By $\ell(x) = \sum_{i} x_i$, we have
$v(p) = \ell(x) + kn = \sum_{i} (x+ k {\bf 1})_i = \sum_{i}p_i$.
Hence modular equality (\ref{eqn:valuation}) holds on 
${\cal L}(\varSigma)$ for any apartment $\varSigma$, and 
holds on the whole ${\cal L}(\varDelta)$ by (B1).
Thus $v$ is a valuation on ${\cal L}(\varDelta)$, 
and ${\cal L}(\varDelta)$ is a modular lattice (by Lemma~\ref{lem:valuation}).

We finally verify 
that ${\cal L}(\varDelta)$ is uniform.
We show that the ascending operator $(\cdot)^+$ coincides with 
the map $(x,k) \mapsto (x,k+1)$, which is obviously 
an automorphism on ${\cal L}(\varDelta)$. 
If $(x',k')$ covers $(x,k)$, 
then $(x',k') \preceq (x,k+1)$ holds.
Therefore it suffices to show that $(x,k+1)$ 
is the join of some elements that covers $(x,k)$.
Consider ${\cal L}(\varSigma)$ containing $(x,k)$, 
which also contains $(x,k+1)$.
By Lemma~\ref{lem:apart}, 
$(x,k)$ and $(x,k+1)$ are regarded as integer vectors 
$x+ k{\bf 1}$ and $x + (k+1){\bf 1}$, respectively.
Then $x + (k+1){\bf 1}$ ($ = (x,k+1)$) is the join of 
$x+ k{\bf 1} + e_i$ for $i=1,2,\ldots,n$ (that covers $x+k{\bf 1}$).
Thus the ascending operator equals the map $(x,k) \mapsto (x,k+1)$, and 
we conclude that ${\cal L}(\varDelta)$ is a uniform modular lattice 
with $\varDelta = {\cal C}({\cal L}(\varDelta))$. 
\end{proof}

\section{Concluding remarks}\label{sec:concluding}
We close this paper with a few remarks. 
\paragraph{Modular graphs and affine buildings.}
In~\cite{CCHO}, we explored interesting connections 
between CAT(0)-spaces and 
various subclasses of {\em weakly modular graphs}. 
Among them, 
{\em orientable modular graphs} 
form a fascinating subclass of weakly modular graphs. 
They are defined as connected undirected graphs $G = (V,E)$
satisfying: 
\begin{itemize}
	\item For any triple of vertices $x_1,x_2,x_3 \in V$ 
	there is a vertex $y \in V$ such that 
	$d(x_i,x_j) = d(x_i,y) + d(y,x_j)$ for $1 \leq i < j \leq 3$, 
	where $d$ is the graph metric on $V$.
	\item There is an edge-orientation such that 
	every 4-cycle $(x_0,x_1,x_2,x_3)$ is oriented 
	as $x_i \to x_{i+1}$ if and only if $x_{i+2} \leftarrow x_{i+3}$.	 
\end{itemize}
(A graph satisfying the first condition is called a {\em modular graph}.)
It is shown in \cite[Section 6.8]{CCHO} 
that an {\em affine building $\varDelta$ of type C}, 
which also becomes a CAT(0)-space,  
gives rise to an orientable modular graph $G$
as a certain subgraph of the $1$-skeleton of $\varDelta$,  
in which the graph $G$ recovers original $\varDelta$ completely.
This raises a natural question: 
{\em do other affine buildings admit such 
a graph-theoretic approach by orientable or more generally weakly modular graph?}
The presented result may be an answer of this question for type A, 
since the (undirected) Hasse diagram of a modular lattice is 
an orientable modular graph.

\paragraph{L-convex functions on uniform modular lattices.}
The primary motivation of uniform modular lattices 
comes from a recent movement~\cite{HH16ext, HH16L-convex,HH17survey} of
{\em Discrete Convex Analysis beyond $\ZZ^n$}.
Originally {\em Discrete Convex Analysis (DCA)}~\cite{MurotaBook} 
was a theory of ``convex" functions on $\ZZ^n$ 
generalizing matroids and submodular functions in combinatorial optimization. 
In DCA, 
{\em L-convex functions} constitute one of fundamental 
classes of discrete convex functions on $\ZZ^n$. 
They are defined as functions $g:\ZZ^{n} \to \RR \cup \{\infty\}$ 
that satisfy the submodularity inequality
\begin{equation}
g(x) + g(y) \geq g(\min(x,y)) + g(\max(x,y)) \quad (x,y \in \ZZ^n),
\end{equation}
and satisfy the linearity over ${\bf 1}$-direction 
\begin{equation}
g(x+ k {\bf 1}) = g(x) + k \alpha  \quad (x \in \ZZ^n, k \in \ZZ)
\end{equation}
for some $\alpha \in \RR$.
Recent work~\cite{HH16ext, HH16L-convex} 
shows that analogues of L-convex functions
are definable on certain grid-like structures generalizing $\ZZ^n$ 
and bring meaningful applications
to several combinatorial optimization problems with which 
the previous DCA could not deal. 
In particular, \cite{HH16L-convex} introduces 
{\em L-convex functions on an affine building of type C}, 
and links them to the design of efficient algorithms 
for classes of network optimization problems; see also \cite{HH17survey}.

The concept of uniform modular lattices enables 
us to define what should be called 
{\em L-convex functions on an affine building of type A}.
Recall Example~\ref{ex:Z^n} that $\ZZ^n$ is a uniform modular lattice 
with ascending operator $x \mapsto x+{\bf 1}$.
Then the above definition of the L-convexity is naturally extended to 
an arbitrary uniform modular lattice ${\cal L}$.
A function $g:{\cal L} \to \RR \cup \{\infty\}$
is called {\em L-convex} if it satisfies the submodularity inequality
\begin{equation}
g(x) + g(y) \geq g(x \wedge y) + g(x \vee y) \quad (x,y \in {\cal L}),
\end{equation}
and satisfies the linearity on the ascending operator
\begin{equation}
g((x)^{+k}) = g(x) + k \alpha  \quad (x \in {\cal L}, k \in \ZZ)
\end{equation}
for some $\alpha \in \RR$.
In the case of $\alpha = 0$, an L-convex function $g$ is viewed as 
the vertex set ${\cal L}/{\sim}$ of 
the affine building ${\cal C}({\cal L})$ of type A.

In the subsequent paper~\cite{HH18b}, we link, via the affine building for SL$(\RR(t)^n)$ (Example~\ref{ex:module}),  
this new L-convex function to computation of the degree of 
the determinants of polynomial matrices; 
it is well-known in the literature 
that the $\deg$-$\det$ computation of polynomial matrices 
generalizes and abstracts a number of basic combinatorial optimization problems; 
see~\cite{MurotaMatrix}.

\section*{Acknowledgments}
The author thanks Yuni Iwamasa and Koyo Hayashi for careful reading and helpful comments, 
and thanks the referee for comments.
The work was partially supported by JSPS KAKENHI Grant Numbers 25280004, 26330023, 26280004, 17K00029.

\end{document}